\definecolor{uuuuuu}{rgb}{0.27,0.27,0.27}
\definecolor{sqsqsq}{rgb}{0.1255,0.1255,0.1255}
\newtheorem{definition}{Definition} [section]
\newtheorem{theorem}[definition]{Theorem}
\newtheorem{lemma}[definition]{Lemma}
\newtheorem{proposition}[definition]{Proposition}
\newtheorem{corollary}[definition]{Corollary}
\newtheorem{conjecture}[definition]{Conjecture}
\newtheorem{claim}[definition]{Claim}
\newtheorem{problem}[definition]{Problem}
\newtheorem{fact}[definition]{Fact}
\begin{document}
\title{\bf\Large Independent sets in hypergraphs omitting an intersection}

\date{\today}

\author{
Tom Bohman
\thanks{
Department of Mathematical Sciences,
Carnegie Mellon University,
Pittsburgh, PA, 15213
USA.
email: tbohman@math.cmu.edu.}
\and
Xizhi Liu
\thanks{Department of Mathematics, Statistics, and Computer Science, University of Illinois, Chicago, IL, 60607 USA.
email: xliu246@uic.edu.
Research partially supported by NSF awards DMS-1763317 and DMS-1952767.
}
\and
Dhruv Mubayi
\thanks{Department of Mathematics, Statistics, and Computer Science, University of Illinois, Chicago, IL, 60607 USA.
email: mubayi@uic.edu.
Research partially supported by NSF awards DMS-1763317 and DMS-1952767.}
}
\maketitle
\begin{abstract}
A $k$-uniform hypergraph with $n$ vertices is an $(n,k,\ell)$-omitting system if it does not contain
two edges whose intersection has size exactly $\ell$.  If in addition it does  not contain
two edges whose intersection has size greater than  $\ell$, then it is an $(n,k,\ell)$-system.
R\"{o}dl and \v{S}i\v{n}ajov\'{a} proved  a lower bound for the independence number of $(n,k,\ell)$-systems that is sharp in order of magnitude
for fixed $2 \le \ell \le k-1$. We consider the same question for the larger class of $(n,k,\ell)$-omitting systems.

For $k\le 2\ell+1$, we believe that the behavior is similar to the case of $(n,k,\ell)$-systems and prove a nontrivial lower bound for the first open case $\ell=k-2$.  For $k>2\ell+1$ we give new lower and upper bounds which show that the minimum independence number of $(n,k,\ell)$-omitting systems has a very different behavior than for $(n,k,\ell)$-systems. Our lower bound for $\ell=k-2$ uses some adaptations of the random greedy independent set algorithm, and our upper bounds (constructions) for $k> 2\ell+1$ are obtained from some pseudorandom graphs.

We also prove some related results where we forbid more than two edges with a prescribed common intersection size and this leads to some applications in Ramsey theory.
For example, we obtain good bounds for the Ramsey number $r_{k}(F^{k},t)$,
where $F^{k}$ is the $k$-uniform Fan.
Here the behavior is quite different than the case $k=2$ which reduces to the classical graph Ramsey number $r(3,t)$.
\end{abstract}

\section{Introduction}
For a finite set $V$ and $k \ge 2$ denote by $\binom{V}{k}$ the collection of all $k$-subsets of $V$.
A $k$-uniform hypergraph ($k$-graph) $\mathcal{H}$ is a family of $k$-subsets of finite set
which is called the vertex set of $\mathcal{H}$ and is denoted by $V(\mathcal{H})$.
A set $I \subset V(\mathcal{H})$ is {\em independent} in $\mathcal{H}$ if it contains no edge of $\mathcal{H}$.
The {\em independence number} of $\mathcal{H}$, denoted by $\alpha(\mathcal{H})$, is the maximum size of an independent set in $\mathcal{H}$.
For every $v \in V(\mathcal{H})$ the {\em degree} $d_{\mathcal{H}}(v)$ of $v$
in $\mathcal{H}$ is the number of edges in $\mathcal{H}$ that contain $v$.
Denote by $d(\mathcal{H})$ and $\Delta(\mathcal{H})$ the average degree and the maximum degree of $\mathcal{H}$, respectively.

An old result of Tur\'{a}n \cite{TU41} implies that $\alpha(G) \ge n/(d+1)$ for every graph $G$ on $n$ vertices with average degree $d$.
Later, Spencer \cite{SP72} extended Tur\'{a}n's result and proved that for all $k \ge 3$ every $n$-vertex $k$-graph
$\mathcal{H}$ with  average degree $d$ satisfies 
\begin{align}\label{equ:Spencer-bound}
\alpha(\mathcal{H}) \ge c_k \frac{n}{d^{1/(k-1)}}
\end{align}
for some constant $c_k > 0$.

The bound for $\alpha(\mathcal{H})$ can be improved if we forbid some family $\mathcal{F}$ of hypergraphs in $\mathcal{H}$.
For $\ell \ge 2$ a (Berge) {\em cycle} of length $\ell$ in $\mathcal{H}$ is a collection of $\ell$ edges $E_1,\ldots,E_{\ell} \in \mathcal{H}$
such that there exists $\ell$ distinct vertices $v_1,\ldots,v_{\ell}$ with $v_i \in E_{i}\cap E_{i+1}$ for $i\in[\ell-1]$
and $v_{\ell} \in E_{\ell} \cap E_{1}$.
A seminal result of Ajtai, Koml\'{o}s, Pintz, Spencer, and Szemer\'{e}di \cite{AKPSS82}
states that for every $n$-vertex $k$-graph $\mathcal{H}$ with average degree $d$ that
contains no cycles of length $2,3,$ and $4$, there exists a constant $c'_{k}>0$ such that
\begin{align}\label{equ:AKPSS-bound}
\alpha(\mathcal{H}) \ge c'_{k} \frac{n}{d^{1/(k-1)}}(\log d)^{1/(k-1)}.
\end{align}
Moreover, this is tight apart from $c_k'$.

Spencer \cite{NR90} conjectured and Duke, Lefmann, and R\"{o}dl \cite{DLR95}
proved that the same conclusion holds even if $\mathcal{H}$ just contains no cycles of length $2$.
Their result was further extended by R\"{o}dl and \v{S}i\v{n}ajov\'{a} \cite{RS94} to the larger family of $(n,k,\ell)$-systems.

\subsection{$(n,k,\ell)$-systems and $(n,k,\ell)$-omitting systems}
Let $k> \ell \ge 1$.
An $n$-vertex $k$-graph $\mathcal{H}$ is an
{\em $(n,k,\ell)$-system} if the intersection of every pair of edges in $\mathcal{H}$ has size less than $\ell$,
and $\mathcal{H}$ is an {\em $(n,k,\ell)$-omitting system}
if it has no two edges whose intersection has size exactly $\ell$.
It is clear from the definition that an $(n,k,\ell)$-system is an $(n,k,\ell)$-omitting system,
but not vice versa,
since an $(n,k,\ell)$-omitting system may have pairwise intersection sizes greater than $\ell$.

Define
\begin{align}
f(n,k,\ell) &= \min\left\{\alpha(\mathcal{H}): \mathcal{H} \text{ is an $(n,k,\ell)$-system} \right\}, \quad{\rm and}\quad\notag\\
g(n,k,\ell) &= \min\left\{\alpha(\mathcal{H}): \mathcal{H} \text{ is an $(n,k,\ell)$-omitting system} \right\}. \notag
\end{align}

We will use the standard asymptotic notations $O, \Omega, \Theta, o$ to simplify the formulas used in the present paper,
and the limit is generally taken with respect to $n$ unless noted otherwise.

The study of $f(n,k,\ell)$ has a long history (e.g. \cite{RS94,KMV14,EV13,TL18}) and, in particular,
R\"{o}dl and \v{S}i\v{n}ajov\'{a} \cite{RS94} proved that
\begin{align}\label{equ:Rodl-Sinajov-bound}
f(n,k,\ell) = \Theta\left(n^{\frac{k-\ell}{k-1}} (\log n)^{\frac{1}{k-1}}\right) \mbox{ for all fixed } k > \ell \ge 2.
\end{align}
It follows that
\begin{align}\label{equ:upper-bound-g-Rodl}
g(n,k,\ell) \le f(n,k,\ell) = O\left( n^{\frac{k-\ell}{k-1}} (\log n)^{\frac{1}{k-1}} \right).
\end{align}

One important difference between $(n,k,\ell)$-systems and $(n,k,\ell)$-omitting systems is their maximum sizes.
By definition,
every set of $\ell$ vertices in an $(n,k,\ell)$-system is contained in at most one edge, thus
every $(n,k,\ell)$-system has size at most $\binom{n}{\ell}/\binom{k}{\ell} = O\left( n^{\ell}\right)$.
However, this is not true for $(n,k,\ell)$-omitting systems.
Indeed, the following result of Frankl and F\"{u}redi \cite{FF85} shows that the maximum size of an $(n,k,\ell)$-omitting system
can be much larger than that of an $(n,k,\ell)$-system when $k> 2\ell+1$.

\begin{theorem}[Frankl-F\"{u}redi \cite{FF85}]\label{THM:FF85}
Let $k > \ell \ge 1$ be fixed integers and $\mathcal{H}$ be an $(n,k,\ell)$-omitting system.
Then $|\mathcal{H}| =O\left( n^{\max\{\ell,k-\ell-1\}}\right)$.
Moreover, the bound is tight up to a constant multiplicative factor.
\end{theorem}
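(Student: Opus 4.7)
The statement splits into the upper bound on $|\mathcal{H}|$ and a matching construction. I would handle the two directions separately; the construction is the easier direction, so I would warm up with that.

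\emph{Construction.} Take the larger of two explicit families. First, a near-maximum partial Steiner $(n,k,\ell)$-system, obtained e.g.~by iterated deletion or a R\"{o}dl nibble, has $\Theta(n^\ell)$ edges with pairwise intersections of size at most $\ell-1$, so it is trivially $(n,k,\ell)$-omitting. Second, fix any $S \in \binom{[n]}{\ell+1}$ and take
\[
\mathcal{H}_S \;=\; \left\{\, S \cup A : A \in \tbinom{[n]\setminus S}{k-\ell-1}\,\right\};
\]
this has $\binom{n-\ell-1}{k-\ell-1} = \Theta(n^{k-\ell-1})$ edges, and any two distinct edges $S\cup A, S\cup A'$ meet in $S \cup (A \cap A')$, a set of size in $\{\ell+1,\ldots,k-1\}$ and hence never equal to $\ell$. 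Taking whichever of the two families is larger yields $\Omega(n^{\max\{\ell, k-\ell-1\}})$.

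\emph{Upper bound.} Let $\mathcal{H}$ be an $(n,k,\ell)$-omitting system. I would start from a local Erd\H{o}s--Ko--Rado observation and then partition $\mathcal{H}$ structurally. For each $\ell$-set $L$, the link $\mathcal{H}_L := \{E\setminus L : L \subseteq E \in \mathcal{H}\}$ is a family of $(k-\ell)$-sets on $[n]\setminus L$; the omitting hypothesis is \emph{equivalent} to saying $\mathcal{H}_L$ is intersecting, so by Erd\H{o}s--Ko--Rado $d_\ell(L) = |\mathcal{H}_L| \le \binom{n-\ell-1}{k-\ell-1}$. This local bound is not enough on its own, so I would then partition $\mathcal{H}$ at the $(\ell+1)$-level: call $E \in \mathcal{H}$ \emph{isolated} if $d_{\ell+1}(T) = 1$ for every $T \in \binom{E}{\ell+1}$, and \emph{clustered} otherwise. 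If $E$ is isolated, it cannot share $\ge \ell+1$ vertices with any other edge (any $(\ell+1)$-subset of the intersection would have $d_{\ell+1} \ge 2$), and by the omitting condition it cannot share exactly $\ell$ vertices either; so the isolated edges form a genuine $(n,k,\ell)$-system, and the trivial bound (each $\ell$-set lies in at most one such edge) gives $O(n^\ell)$ isolated edges.

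For the clustered edges, each $E$ contains some popular $(\ell+1)$-set $T_E$ with $d_{\ell+1}(T_E) \ge 2$. I would apply a $\Delta$-system (sunflower) extraction inside the family of popular $(\ell+1)$-sets: if they proliferate, one extracts from $\mathcal{H}$ a sunflower whose core has size exactly $\ell$, contradicting the omitting hypothesis. Quantifying this extraction separately in the two regimes $k \le 2\ell+1$ and $k > 2\ell+1$ should yield $O(n^\ell)$ and $O(n^{k-\ell-1})$ respectively for the clustered part, so that combining additively with the isolated bound gives the claimed $O(n^{\max\{\ell, k-\ell-1\}})$.

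\emph{Main obstacle.} The delicate step is the $\Delta$-system extraction together with the bookkeeping that makes the isolated and clustered contributions combine additively rather than multiplicatively. A naive bound for the clustered part of the form $|\mathcal{P}|\cdot \binom{n-\ell-1}{k-\ell-1}$, where $\mathcal{P}$ is the set of popular $(\ell+1)$-sets, easily loses a factor of $n^{\ell+1}$ against the extremal construction. Producing the correct $\max$-structure therefore requires carefully iterating the sunflower lemma with the EKR-type bounds at both the $\ell$- and $(\ell+1)$-level links, and this is the technical heart of Frankl--F\"{u}redi's argument.
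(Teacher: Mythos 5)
The paper does not prove this theorem; it cites Frankl--F\"uredi \cite{FF85}, so there is no in-paper proof to compare against, and I am evaluating your argument on its own terms.

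Your construction is correct: a near-optimal partial Steiner $(n,k,\ell)$-system gives $\Theta(n^{\ell})$ edges with pairwise intersections of size $< \ell$, and the sunflower family $\mathcal{H}_S = \{S \cup A : A \in \binom{[n]\setminus S}{k-\ell-1}\}$ with $|S| = \ell+1$ gives $\Theta(n^{k-\ell-1})$ edges with pairwise intersections of size $\ge \ell+1$; neither intersection size equals $\ell$, so taking the larger family establishes $\Omega(n^{\max\{\ell, k-\ell-1\}})$. Your two opening observations for the upper bound are also correct: the omitting condition is indeed equivalent to every $\ell$-set link being intersecting (hence bounded by EKR), and any edge $E$ with $d_{\ell+1}(T)=1$ for all $T \in \binom{E}{\ell+1}$ meets every other edge in $< \ell$ vertices, so the isolated edges form an $(n,k,\ell)$-system and number $O(n^{\ell})$.

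The genuine gap is the clustered-edge bound, and it is not a small one. You assert that if the popular $(\ell+1)$-sets proliferate, one can ``extract from $\mathcal{H}$ a sunflower whose core has size exactly $\ell$,'' contradicting the omitting hypothesis, but nothing you have written forces the core to have size $\ell$. The classical sunflower lemma, applied either to the clustered edges or to the family of popular $(\ell+1)$-sets, gives a sunflower whose core can be of any size in $\{0,\ldots,k-1\}$ (or $\{0,\ldots,\ell\}$ at the $(\ell+1)$-level), and cores of size $\neq \ell$ yield no contradiction. Indeed your own construction $\mathcal{H}_S$ is a single huge sunflower with core of size $\ell+1$ in which \emph{every} $(\ell+1)$-subset of $S$ is maximally popular, so ``many popular $(\ell+1)$-sets'' certainly does not by itself force an $\ell$-core sunflower. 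Getting the exponent down to $\max\{\ell, k-\ell-1\}$ requires the full F\"uredi delta-system machinery --- iterating to find an edge $E$ and a kernel $D \subseteq E$ together with simultaneous large sunflowers with core $T$ for every $T$ between $D$ and $E$, then ruling out $|D| \le \ell$ via the omitting condition and $|D| \ge \ell+1$ via a degree count --- and you have named this as the hard step without carrying it out. As written, the proposal establishes the lower bound and the $O(n^{\ell})$ bound for the isolated part, but leaves the actual theorem unproved.
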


Theorem~\ref{THM:FF85} together with $(\ref{equ:Spencer-bound})$ imply that for fixed $k,\ell$,
\begin{align}\label{equ:lower-bound-g-Spencer}
g(n,k,\ell) =
\begin{cases}
\Omega\left(n^{\frac{k-\ell}{k-1}}\right) & k \le 2\ell+1, \\
\Omega\left(n^{\frac{\ell+1}{k-1}}\right) & k > 2\ell+1.
\end{cases}
\end{align}

Notice that for $k\le 2\ell+1$ the bounds given by $(\ref{equ:upper-bound-g-Rodl})$ and $(\ref{equ:lower-bound-g-Spencer})$
match except for a factor of $(\log n)^{1/(k-1)}$,
but for $k>2\ell+1$, these two bounds have a gap in the exponent of $n$.

Our main goal in this paper is to extend the results of R\"{o}dl and \v{S}i\v{n}ajov\'{a}
to the larger class of $(n,k,\ell)$-omitting systems and improve the bounds given by
$(\ref{equ:upper-bound-g-Rodl})$ and $(\ref{equ:lower-bound-g-Spencer})$.
In other words, the question we focus on is the following:
\begin{align}
\text{What is the value of $g(n,k,\ell)$?} \notag
\end{align}

Our results for $(n,k,\ell)$-omitting systems are divided into two parts.
For $k\le 2\ell+1$, we believe that the behavior is similar to that of $(n,k,\ell)$-systems and
prove a nontrivial lower bound for the first open case $\ell=k-2$.
For $k>2\ell+1$ we give new lower and upper bounds which show that the minimum independence number of $(n,k,\ell)$-omitting systems has a very different behavior than for $(n,k,\ell)$-systems.

{\bf Remark.}
Let $k\ge 3$ and $L\subset \{0,1,\ldots,k-1\}$.
An $(n,k,L)$-omitting system is an $n$-vertex $k$-graph that has no two edges whose intersection
has size in $L$.
Our methods can be applied to the more general setting of $(n,k,L)$-omitting systems,
but in the present paper we only consider the case $|L| = 1$.

\subsection{$k \le 2\ell+1$}
As mentioned above, for this range of $\ell$ and $k$, the issue at hand is only the polylogarithmic factor in $g(n,k,\ell)$. It follows from the definition that an $(n,k,k-1)$-omitting system is also an $(n,k,k-1)$-system,
thus R\"{o}dl and \v{S}i\v{n}ajov\'{a}'s result $(\ref{equ:Rodl-Sinajov-bound})$ implies that
\begin{align}
g(n,k,k-1) = f(n,k,k-1) = \Theta\left( n^{\frac{1}{k-1}} (\log n)^{\frac{1}{k-1}} \right). \notag
\end{align}
So, the first open case in the range of $k\le 2\ell+1$ is $\ell=k-2$,
and for this case we prove the following nontrivial lower bound for $g(n,k,k-2)$,
which improves $(\ref{equ:lower-bound-g-Spencer})$.

\begin{theorem}\label{THM:indep-ell-k-2}
Suppose that $k \ge 4$.
Then every $(n,k,k-2)$-omitting system has an independent set of size
$\Omega\left(n^{2/(k-1)} \left(\log \log n\right)^{1/(k-1)}\right)$.
In other words,
\begin{align}
g(n,k,k-2) = \Omega\left(n^{\frac{2}{k-1}} \left(\log \log n\right)^{\frac{1}{k-1}}\right). \notag
\end{align}
\end{theorem}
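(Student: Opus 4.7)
The plan is to run a random greedy independent set (RGIS) algorithm on $\mathcal{H}$, with an analysis tuned to the structural constraints forced by omitting pairwise intersections of size exactly $k-2$. By Theorem~\ref{THM:FF85} applied with $\ell=k-2$ we have $|\mathcal{H}|=O(n^{k-2})$, hence $d(\mathcal{H})=O(n^{k-3})$, and Spencer's bound already gives $\alpha(\mathcal{H})=\Omega(n^{2/(k-1)})$; the content of the theorem is the extra factor $(\log\log n)^{1/(k-1)}$.

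The first useful step is a structural observation about $(k-2)$-codegrees. For any $(k-2)$-subset $S\subset V(\mathcal{H})$, any two edges $E_1,E_2\supseteq S$ satisfy $|E_1\cap E_2|\geq k-2$, and since the value $k-2$ is forbidden, $|E_1\cap E_2|\in\{k-1,k\}$. Consequently the $2$-element links $E_i\setminus S$ form a pairwise one-intersecting family of $2$-sets, which must be either a \emph{star} (all sharing a common vertex, so the edges through $S$ share a common $(k-1)$-set $T$) or a \emph{triangle} (three links $\{a,b\},\{a,c\},\{b,c\}$). Thus the only way $\mathcal{H}$ can exhibit high $(k-1)$-codegree is through star \emph{clusters} $\{T\cup\{v\}:v\in L(T)\}$ with $|L(T)|\geq 2$, and these clusters are globally rigid.

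With this in hand I would execute the standard RGIS process on $\mathcal{H}$: at each step, pick a uniformly random vertex from the active set $A$, append it to the partial independent set $I$, and delete from $A$ every vertex whose addition to $I$ would complete an edge of $\mathcal{H}$. The analysis would follow a differential-equation/martingale method, tracking $|A(t)|$ together with auxiliary statistics such as the count of $(k-1)$-subsets of $I$ whose remaining extension still lies in $A$. Restricted to the edges of $\mathcal{H}$ not lying in any star cluster, the $(n,k,k-2)$-omitting hypothesis coincides with the $(n,k,k-2)$-system hypothesis, and on this part the analysis reproduces the R\"{o}dl--\v{S}i\v{n}ajov\'{a} $(\log n)^{1/(k-1)}$-type gain.

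The main obstacle is that star clusters produce correlated ``mass-killing'' events: once a pivot $(k-1)$-set $T$ is contained in $I$, the entire set $L(T)$ is removed from $A$ in a single step, the kind of event the standard concentration machinery handles poorly. Controlling these shocks requires combining the structural rigidity of clusters with the Frankl--F\"{u}redi edge bound in order to show that the cumulative contribution of cluster shocks to the vertex-killing rate can be absorbed into the error terms of the differential-equation analysis up to a stopping time at which $|I|=\Theta\!\left(n^{2/(k-1)}(\log\log n)^{1/(k-1)}\right)$. The doubly logarithmic factor (in place of $\log n$) should arise from the weaker concentration window available when high $(k-1)$-codegree clusters are allowed, and making this accounting rigorous is the technical heart of the proof.
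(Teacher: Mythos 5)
Your structural observation about $(k-2)$-codegrees is correct and even mirrors part of the paper's Claim~\ref{CLAIM:H-k-1}: for a $(k-2)$-set $S$, any two edges through $S$ must meet in exactly $k-1$ vertices, so their $2$-element links pairwise intersect and form a star or a triangle. But after that you propose to run the random greedy independent set algorithm directly on the $k$-graph $\mathcal{H}$ and assert that the ``cluster-shock'' contributions can be absorbed into the differential-equation error terms, while explicitly flagging this as unresolved (``making this accounting rigorous is the technical heart of the proof''). That is precisely the gap. The Bennett--Bohman analysis (Theorem~\ref{THM:BB16-A}) requires the bounds $\Delta_i(\mathcal{H}) < D^{(k-i)/(k-1)-\epsilon}$ and $\Gamma(\mathcal{H}) < D^{1-\epsilon}$, and an $(n,k,k-2)$-omitting system can have $(k-1)$-sets lying in $\Theta(n)$ edges, so these hypotheses fail badly and no mechanism is offered to recover concentration when a single step can delete a linear-size cluster. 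You also give no derivation of why, if such an analysis existed, the gain would be $(\log\log n)^{1/(k-1)}$ rather than $(\log n)^{1/(k-1)}$ or $1$; the ``weaker concentration window'' heuristic is not an argument.

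The paper sidesteps the obstruction rather than tackling it head-on. It splits $\mathcal{H}$ into a $(k-1)$-graph $\mathcal{H}_{k-1}$ of high-degree $(k-1)$-shadows (degree at least $n^{(k-3)/(k-1)}/(\log n)^\beta$) and a $k$-graph $\mathcal{H}_{k}$ of edges missing all such shadows. Claim~\ref{CLAIM:H-k-1} shows $\mathcal{H}_{k-1}$ is an $(n,k-1,k-2)$-system with controlled lower-order degrees (so Bennett--Bohman applies after a Shearer-style regularization via Cartesian products with linear hypergraphs), and Claim~\ref{CLAIM:H-k} bounds the $(2,j)$-cycle counts in $\mathcal{H}_{k}$. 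The common-independent-set lemma (Theorem~\ref{THM:common-indep-set}) then runs RGIS on the regularized $\mathcal{H}_{k-1}$, uses Proposition~\ref{PROP:BB16} to show the RGIS output is nearly linear with respect to $\mathcal{H}_{k}$, and finishes with the Frieze--Mubayi theorem on linear hypergraphs. The $(\log\log n)^{1/(k-1)}$ factor is the logarithm (to the power $1/(k-1)$) of the induced $\mathcal{H}_{k}$-density on the RGIS output, which the parameter choices force to be a power of $\log n$. None of these components---the two-uniformity decomposition, the common-independent-set framework, the regularization trick, or the linear-hypergraph finishing step---appear in your sketch, so what is missing is not a detail but the proof's entire engine.
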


Unfortunately,
our method for proving Theorem~\ref{THM:indep-ell-k-2} cannot be extended to the entire range of $k\le 2\ell+1$,
but we make the following conjecture.

\begin{conjecture}\label{CONJ:indep-ell-large}
For all fixed integers $k> \ell \ge 2$ that satisfy $k\le 2\ell+1$
there exists a function $\omega(n) \to \infty$ as $n \to \infty$
such that $g(n,k,\ell) = \Omega\left(n^{\frac{k-\ell}{k-1}}\omega(n)\right)$.
\end{conjecture}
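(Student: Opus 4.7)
The plan is to extend the random greedy independent set strategy used for Theorem~\ref{THM:indep-ell-k-2} (the case $\ell = k-2$) to the remaining open values of $\ell$ in the range $k \le 2\ell+1$. The target is an independent set of size $\Omega(n^{(k-\ell)/(k-1)} \omega(n))$ for some $\omega(n) \to \infty$; even a modest gain such as $\omega(n) = (\log\log n)^{1/(k-1)}$, matching the form obtained in Theorem~\ref{THM:indep-ell-k-2}, would suffice to confirm the conjecture. The overall shape of the argument should mirror the analysis of R\"{o}dl and \v{S}i\v{n}ajov\'{a} \cite{RS94} for $(n,k,\ell)$-systems: run the semi-random process that picks a uniform random active vertex at each step, add it to the growing independent set if it does not complete an edge, and use the differential equations method to track the number of surviving vertices and edges around their expected trajectories.

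The essential new phenomenon compared to the $(n,k,\ell)$-system case is that pairs of edges in an $(n,k,\ell)$-omitting system may share more than $\ell$ vertices. The first step would be to establish codegree bounds: if an $\ell$-set $T$ is contained in at least two edges of $\mathcal{H}$, then the $(k-\ell)$-sets $\{E \setminus T : T \subset E \in \mathcal{H}\}$ form a pairwise intersecting family on $V \setminus T$, and hence the $\ell$-codegree is at most $\binom{n-\ell-1}{k-\ell-1} = O(n^{k-\ell-1})$. Since $k \le 2\ell+1$ gives $k-\ell-1 \le \ell$, this is comparable to the worst case for $(n,k,\ell)$-systems. Using refined codegree bounds for larger sets, the hypergraph should decompose into a \emph{core} (a near-system with controlled pair-codegrees) together with local \emph{clusters} of edges that share a common vertex subset of size $\ge \ell+1$. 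The greedy process can then be analyzed on the core much as in \cite{RS94}, while each cluster is eliminated efficiently as soon as a single vertex from its large shared intersection is removed from the active set.

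The main obstacle will be carrying the concentration arguments through for intermediate $\ell$, where both the number of partial substructures one must track and the typical sizes of local clusters grow. The differential equations method needs uniform control over weighted degree functions on the surviving vertex set, and edge pairs sharing $\ge \ell+1$ vertices create correlated contributions that must be estimated carefully for each intersection size in $\{\ell+1,\ldots,k-1\}$. A cleaner route may be to first prove a regularity-style lemma that splits the edge set into pieces, each of which is either a near-system (to which \cite{RS94} applies) or a structurally simple union of clusters (which can be handled by deleting a small \emph{kernel} of shared vertices), and then combine the resulting independent sets via an iteration over a carefully chosen vertex partition. A secondary obstacle is the boundary case $\ell = \lceil (k-1)/2 \rceil$, where $k-\ell-1 = \ell$ up to parity and the core-plus-cluster decomposition is most delicate.
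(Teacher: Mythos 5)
The statement you are addressing is stated in the paper as an open conjecture, not a theorem: the authors explicitly write that their method for $\ell=k-2$ ``cannot be extended to the entire range of $k\le 2\ell+1$'' and that the smallest open case is $k=5$, $\ell=2$. There is therefore no proof in the paper to compare against, and your submission would have to stand on its own as a complete argument. It does not: what you have written is a research plan. The ``core plus clusters'' decomposition is never defined, the differential-equations analysis is never carried out, no concentration estimates are given, and you yourself list the correlated contributions from edge pairs with intersection in $\{\ell+1,\dots,k-1\}$ and the boundary case $\ell=\lceil (k-1)/2\rceil$ as unresolved obstacles. A proposal that names its own gaps without closing them is not a proof of the conjecture.

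Two of your concrete claims are also wrong or misleading in ways that matter. First, the assertion that the $\ell$-codegree bound $O(n^{k-\ell-1})$ is ``comparable to the worst case for $(n,k,\ell)$-systems'' is not right: in an $(n,k,\ell)$-system every $\ell$-set lies in at most \emph{one} edge, so the relevant codegree there is $1$, whereas here it can be polynomial in $n$. This polynomial codegree is precisely what breaks the approach: with $|\mathcal{H}|=O(n^{\ell})$ one has $D\approx n^{\ell-1}$, and for $k$ close to $2\ell+1$ the quantity $n^{k-\ell-1}$ exceeds the threshold $D^{\frac{k-\ell}{k-1}-\epsilon}$ required by Theorem~\ref{THM:BB16-A}, so the random greedy analysis (and hence the strategy of Theorem~\ref{THM:common-indep-set}) cannot be applied directly; this is exactly why the paper's proof of Theorem~\ref{THM:indep-ell-k-2} needs the careful two-level split into $\mathcal{H}_{k-1}$ and $\mathcal{H}_k$ with a degree threshold, and why that split does not obviously iterate when intersections of every size $\ell+1,\dots,k-1$ are allowed. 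Second, the claim that a cluster ``is eliminated efficiently as soon as a single vertex from its large shared intersection is removed from the active set'' misdescribes the greedy process: removing a vertex $v$ from $V(i)$ only deletes edges containing a $u$ with $\{u,v\}\in\mathcal{H}(i)$, while other edges through $v$ merely shrink by one vertex, so a cluster sharing an $(\ell+1)$-set survives as a cluster of $(k-1)$-sets sharing an $\ell$-set --- and these new intersections of size exactly $\ell$ are no longer forbidden. Until you have a precise decomposition lemma with verified degree and codegree hypotheses for each piece, Conjecture~\ref{CONJ:indep-ell-large} remains open.
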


Theorem \ref{THM:indep-ell-k-2} shows that Conjecture~\ref{CONJ:indep-ell-large} is true for $\ell=k-2$.
The smallest open case is $k=5$ and $\ell=2$.

\subsection{$k> 2\ell+1$}
Recall that in the range of $k>2\ell+1$ the bounds given by $(\ref{equ:upper-bound-g-Rodl})$ and $(\ref{equ:lower-bound-g-Spencer})$
leave a gap in the exponent of $n$.
The following result shows that for a wide range of $k$ and $\ell$
neither of them gives the correct order of magnitude.

\begin{theorem}\label{THM:indep-number-ell-small}
Let $\ell \ge 2$ and $k > 2\ell+1$ be fixed. Then
\begin{align}
\Omega\left(\max\left\{n^{\frac{\ell+1}{3\ell-1}}, n^{\frac{\ell+1}{k-1}}\right\}\right)
=  g(n,k,\ell) =
O\left(n^{\frac{\ell+1}{2\ell}} \left(\log n\right)^{\frac{1}{\ell}}\right). \notag
\end{align}
\end{theorem}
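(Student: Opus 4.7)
I address the lower and upper bounds separately.

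\emph{Lower bound.} The bound $g(n,k,\ell) = \Omega(n^{(\ell+1)/(k-1)})$ follows directly from Spencer's inequality~(\ref{equ:Spencer-bound}) combined with the Frankl--F\"{u}redi edge count of Theorem~\ref{THM:FF85}: with $|\mathcal{H}| = O(n^{k-\ell-1})$ and hence average degree $d = O(n^{k-\ell-2})$, we obtain $\alpha(\mathcal{H}) = \Omega(n/d^{1/(k-1)}) = \Omega(n^{(\ell+1)/(k-1)})$. Since $(\ell+1)/(k-1) \ge (\ell+1)/(3\ell-1)$ exactly when $k \le 3\ell$, this already proves the theorem in that range, and the remaining task is to establish $\alpha(\mathcal{H}) = \Omega(n^{(\ell+1)/(3\ell-1)})$ when $k > 3\ell$. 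To beat Spencer here, the plan is to exploit the structure forced by the omitting condition. For each $\ell$-set $S \subset V(\mathcal{H})$, the star $\mathcal{H}(S) = \{E \in \mathcal{H} : S \subset E\}$ has the property that any two of its edges share at least one vertex outside $S$, so $\{E \setminus S : E \in \mathcal{H}(S)\}$ is an intersecting family of $(k-\ell)$-sets. A Hilton--Milner-type analysis then shows that either $|\mathcal{H}(S)|$ is bounded by a constant or almost all edges of $\mathcal{H}(S)$ share a common vertex $v_S$ outside $S$. Declaring $v \in V$ \emph{heavy} if it plays the role of $v_S$ for many $\ell$-sets $S$ and deleting all heavy vertices collapses the sunflower structure; on the residual hypergraph, one applies either the R\"{o}dl--\v{S}i\v{n}ajov\'{a} bound~(\ref{equ:Rodl-Sinajov-bound}) to what is essentially an $(n', k, \ell)$-system, or Spencer to an auxiliary $(\ell+1)$-uniform hypergraph encoding the heavy sunflower cores. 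Balancing the two contributions produces the required exponent.

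\emph{Upper bound.} The plan is a pseudorandom construction. Start from a $(2\ell+1)$-uniform $(n, 2\ell+1, \ell)$-system $\mathcal{H}_0$ achieving the R\"{o}dl--\v{S}i\v{n}ajov\'{a} bound $\alpha(\mathcal{H}_0) = \Theta(n^{(\ell+1)/(2\ell)} (\log n)^{1/(2\ell)})$, guaranteed by (\ref{equ:Rodl-Sinajov-bound}). For each $T \in \mathcal{H}_0$, extend $T$ to a $k$-set $E(T) = T \cup W(T)$, where $W(T) \subset V \setminus T$ is a $(k - 2\ell - 1)$-set determined by $T$ via an auxiliary pseudorandom graph (for instance, the neighborhood of a vertex associated with $T$ in a Paley-type graph). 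Pseudorandomness ensures that (i) no two extended edges $E(T_1), E(T_2)$ have intersection of size exactly $\ell$, so $\mathcal{H}$ is $(n,k,\ell)$-omitting, and (ii) every independent set of $\mathcal{H}$ restricts to a nearly independent set of $\mathcal{H}_0$ of comparable size, yielding the upper bound with the $(\log n)^{1/\ell}$ factor absorbing losses from the extension step.

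\emph{Main obstacle.} The principal difficulty is the lower bound in the regime $k > 3\ell$: identifying the correct auxiliary structure whose edge count produces the denominator $3\ell - 1$. The naive candidates (the $(\ell+1)$-shadow, or a $(2\ell+1)$-trace of $\mathcal{H}$) do not give the right count, so a more refined decomposition combining Hilton--Milner with the Frankl--F\"{u}redi extremal structure is required. On the upper bound side, the delicate step is verifying, synchronously across all $T \in \mathcal{H}_0$, that the chosen extensions avoid creating $\ell$-intersections, which relies on sharp control of the pseudorandom graph.
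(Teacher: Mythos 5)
Your reduction of the lower bound to the case $k>3\ell$ via Spencer plus Theorem~\ref{THM:FF85} is correct and is exactly what the paper does (this is~(\ref{equ:lower-bound-g-Spencer})). Beyond that point, both halves of your plan have genuine gaps.

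\textbf{Lower bound for $k>3\ell$.} The dichotomy you invoke is not what Hilton--Milner gives. For an $\ell$-set $S$, the links $\{E\setminus S : E\in\mathcal{H}(S)\}$ form an intersecting family of $(k-\ell)$-sets, but Hilton--Milner only says that a \emph{non}-star intersecting family of $r$-sets in $[N]$ has size at most $\binom{N-1}{r-1}-\binom{N-r-1}{r-1}+1 = \Theta(N^{r-2})$ --- a polynomial bound, not a constant. So ``either $|\mathcal{H}(S)|$ is bounded by a constant or it is essentially a star'' is false, and the heavy-vertex deletion step built on it does not collapse anything. You yourself flag that you do not know which auxiliary structure produces the denominator $3\ell-1$. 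The paper gets it from a very different mechanism: an iterative \emph{decomposition algorithm} that repeatedly replaces a hypergraph $\mathcal{G}$ of uniformity $k'$ by a shadow $\mathcal{G}_{k'-i_0}$ (lower uniformity, still $S_\lambda(\ell)$-free by Lemma~\ref{LEMMA:decomposed-hypergraphs-S-free}) and a residual $\mathcal{G}_{k'}$ (same uniformity, forbidding one more sunflower). The output family $\mathcal{F}$ consists of $(2\ell+1)$-indecomposable hypergraphs, and Lemma~\ref{LEMMA:size-indecomposable-hypergraphs} (via Deza--Erd\H{o}s--Frankl plus Frankl--F\"uredi) bounds each member by $O(n^{\min\{k_0-2,\,k_i-\ell-1\}})$ edges. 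A single-round random sparsification with $p=\delta n^{-(2\ell-2)/(3\ell-1)}$ then kills all edges in expectation, and Lemma~\ref{LEMMA:common-indep-set-of-F-is-indep-in-H} lifts the common independent set back to $\mathcal{H}$. The exponent $\frac{\ell+1}{3\ell-1}$ falls out of the choice $k_0=2\ell+1$.

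\textbf{Upper bound.} The ``one extension per edge of $\mathcal{H}_0$'' construction cannot work, for a counting reason independent of how clever the extension map is. An $(n,2\ell+1,\ell)$-system has $O(n^\ell)$ edges, so your extended $k$-graph $\mathcal{H}$ has $O(n^\ell)$ edges and average degree $O(n^{\ell-1})$; Spencer's bound~(\ref{equ:Spencer-bound}) then forces $\alpha(\mathcal{H}) = \Omega(n^{1-(\ell-1)/(k-1)}) = \Omega(n^{(k-\ell)/(k-1)})$, which for $k>3\ell$ is polynomially larger than the target $n^{(\ell+1)/(2\ell)}$. Moreover your step~(ii) is unjustified: if $T\in\mathcal{H}_0$ lies entirely inside an independent set $I$ of $\mathcal{H}$, this does not violate independence of $I$ unless $W(T)\subset I$ as well, so $I$ need not be nearly independent in $\mathcal{H}_0$. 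The paper's construction is structurally different: it takes the pseudorandom bipartite graph $G(q^\ell,q^2,2,\ell)$, restricts to a random vertex subset $U$ of size $n\approx q^{2\ell/(\ell+1)}$, and inside each ``line'' $E_i = E\cap U$ (size $\approx n^{(\ell-1)/(2\ell)}$) places \emph{all} $k$-sets containing a fixed $(\ell+1)$-subset $L_i\subset E_i$. Two edges inside the same line share $L_i$ (intersection $\geq\ell+1$), and two edges in different lines lie in distinct lines whose intersection is $\leq\ell-1$, so no $\ell$-intersection ever occurs. The many-edges-per-line structure is what makes the second-moment argument in Lemma~\ref{LEMMA:indep-algebraic-random-constrction} produce $\alpha=O(\tau n/d_1)=O(n^{(\ell+1)/(2\ell)}(\log n)^{1/\ell})$; a single edge per line cannot.
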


\textbf{Remark.}
\begin{itemize}
\item[(a)] The lower bound $n^{\frac{\ell+1}{3\ell-1}}$ can be improved to $n^{\frac{3-\sqrt{5}}{2}+o_{\ell}(1)} \sim n^{0.38196+o_{\ell}(1)}$.
See the remark in the end of Section~\ref{SEC:omit-system-ell-small} for details.
\item[(b)] It is clear that Theorem~\ref{THM:indep-number-ell-small} improves the bound given by $(\ref{equ:lower-bound-g-Spencer})$
for $k>3\ell$, and it also improves the bound given by $(\ref{equ:upper-bound-g-Rodl})$ for $k>2\ell+1$ as
$\frac{k-\ell}{k-1} - \frac{\ell+1}{2\ell} = \frac{(\ell-1)(k-2\ell-1)}{2\ell(k-1)} > 0$ for $k>2\ell+1$.
\end{itemize}

It would be interesting to determine $g(n,k,\ell)$ for $k>2\ell+1$. Here, we are not able to offer a conjecture for the exponent of $n$.

\begin{problem}\label{PROB:ell-small}
Determine the order of magnitude of $g(n,k,\ell)$ for $k>2\ell+1$.
\end{problem}

For the first open case  $(k,\ell)=(6,2)$ Theorem~\ref{THM:indep-number-ell-small} gives
$\Omega\left(n^{3/5}\right) = g(n,6,2) = O\left(n^{3/4+o(1)}\right)$.
Similar to Remark $(a)$ above the lower bound for $g(n,6,2)$ can be improved to $\Omega\left(n^{2/3}\right)$.
See the remark in the end of Section~\ref{SEC:omit-system-ell-small} for details.

\subsection{$(n,k,\ell,\lambda)$-systems and $(n,k,\ell,\lambda)$-omitting systems}
Let $k> \ell \ge 1$ and $\lambda\ge 1$ be integers.
The $k$-graph $S^k_{\lambda}(\ell)$ consists of ${\lambda}$ edges $E_1,\ldots,E_{\lambda}$ such that
$E_i \cap E_j = S$ for $1 \le i < j \le {\lambda}$ and some fixed set $S$ (called the center) of size $\ell$.
When $\ell=1$ we just write $S_{\lambda}^k$, and we will omit the superscript $k$ in $S^k_{\lambda}(\ell)$ if it is obvious.

It is easy to see that an $n$-vertex $k$-graph is an $(n,k,\ell)$-omitting system iff it is $S_{2}(\ell)$-free,
and is an $(n,k,\ell)$-system iff it is $\{S_{2}(\ell), \ldots, S_{2}(k-1)\}$-free.
This motivates us to define the following generalization of $(n,k,\ell)$-omitting systems and $(n,k,\ell)$-systems.

An $n$-vertex $k$-graph $\mathcal{H}$ is an
{\em $(n,k,\ell,\lambda)$-system} if every set of $\ell$ vertices is contained in at most $\lambda$ edges,
and $\mathcal{H}$ is an {\em $(n,k,\ell,\lambda)$-omitting system}
if it does not contain $S_{\lambda+1}(\ell)$ as a subgraph.

Define
\begin{align}
f(n,k,\ell,\lambda) &= \min\left\{\alpha(\mathcal{H}): \mathcal{H} \text{ is an $(n,k,\ell,\lambda)$-system} \right\},
\quad{\rm and}\quad\notag\\
g(n,k,\ell,\lambda) &= \min\left\{\alpha(\mathcal{H}): \mathcal{H} \text{ is an $(n,k,\ell,\lambda)$-omitting system} \right\}. \notag
\end{align}

When $\lambda$ is a fixed constant, the value of $f(n,k,\ell,\lambda)$ is essentially the same as $f(n,k,\ell)$ (e.g. see \cite{RS94}),
i.e. $f(n,k,\ell,\lambda) = \Theta\left(f(n,k,\ell)\right)$.
Similarly, the same conclusions as in Theorems~\ref{THM:indep-ell-k-2} and \ref{THM:indep-number-ell-small}
also hold for $g(n,k,\ell,\lambda)$, since Theorem~\ref{THM:FF85} holds for all $S_{\lambda}(\ell)$-free hypergraphs
and using it one can easily extend the proof for the case $\lambda = 1$ to the case $\lambda > 1$.
For the sake of simplicity, we will prove Theorem~\ref{THM:indep-ell-k-2} only for the case $\lambda =1$.

When $\lambda$ is not a constant, even the value of $f(n,k,\ell,\lambda)$ is not known in general.
Here is a summary of the known results.
\begin{itemize}
    \item $\ell=1$: An $(n,k,1, \lambda)$-system is just a $k$-graph with maximum degree $\lambda$ and here complete $k$-graphs and (\ref{equ:Spencer-bound}) yield
$$f(n,k,1, \lambda) = \Theta\left(\frac{n}{\lambda^{1/(k-1)}}\right).$$
On the other hand a result of Loh~\cite{Loh09} implies
 $$g(n,k,1,\lambda) = \frac{n}{\lambda+1}\qquad \hbox{whenever} \qquad  (\lambda+1)(k-1)\mid n.$$
If the divisibility condition fails then we have a small error term above.

\item $\ell=k-1$: Kostochka, Mubayi, and Verstra\"{e}te \cite{KMV14} proved that
\begin{align}
f(n,k,k-1,\lambda) = \Theta\left(\left(\frac{n}{\lambda}\right)^{\frac{1}{k-1}} \left(\log\frac{n}{\lambda}\right)^{\frac{1}{k-1}} \right)
\quad{\rm for}\quad 1\le \lambda \le \frac{n}{(\log n)^{3(k-1)^2}}. \notag
\end{align}

\item $2\le \ell\le k-2$: Tian and Liu \cite{TL18} proved that
\begin{align}
f(n,k,\ell,\lambda)
= \Omega\left(\left(\frac{n}{\lambda}\log \frac{n}{\lambda}\right)^{1/\ell}\right)
\quad {\rm for} \quad
k \ge 5,\mbox{ } \frac{2k+4}{5} < \ell \le k-2,\mbox{ } \lambda =o\left( n^{\frac{5\ell-2k-4}{3k-9}} \right). \notag
\end{align}
They also gave a construction which implies that
\begin{align}
f(n,k,\ell,\lambda)
=O\left( \left(\frac{n^{k-\ell}}{\lambda}\right)^{\frac{1}{k-1}}\left(\log \frac{n}{\lambda}\right)^{\frac{1}{k-1}}\right)
\quad{\rm for}\quad 2 \le \ell \le k-1,\mbox{ } \log n \ll \lambda \ll n. \notag
\end{align}
\end{itemize}

Since for every $\lambda>0$ an $(n,k,\ell,\lambda)$-system has size $O\left(\lambda n^{\ell}\right)$,
it follows from $(\ref{equ:Spencer-bound})$ that
\begin{align}
f(n,k,\ell,\lambda) = \Omega\left(\left(\frac{n^{k-\ell}}{\lambda}\right)^{\frac{1}{k-1}}\right), \notag
\end{align}
which, by Tian and Liu's upper bound, is tight up to a factor of $(\log n)^{1/(k-1)}$.

Using a result of Duke, Lefmann, and R\"{o}dl \cite{DLR95} we are able to improve the lower bound for $f(n,k,\ell,\lambda)$
to match the upper bound obtained by Tian and Liu for a wide range of $\lambda$.

\begin{theorem}\label{THM:independence-number-lambda-design}
Let $k>\ell \ge 2$ be fixed.
If there exists a constant $\delta>0$ such that $0 < \lambda < n^{\frac{\ell-1}{k-2}-\delta}$, then
\begin{align}
f(n,k,\ell,\lambda) = \Omega\left(\left(\frac{n^{k-\ell}}{\lambda}\right)^{\frac{1}{k-1}}\left(\log n\right)^{\frac{1}{k-1}} \right). \notag
\end{align}
\end{theorem}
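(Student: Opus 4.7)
The strategy is to reduce to the theorem of Duke, Lefmann, and R\"odl (the strengthening of (\ref{equ:AKPSS-bound}) to linear $k$-graphs stated just before it) by producing a large linear sub-hypergraph of $\mathcal{H}$ via random vertex sampling and a small edge-deletion step. Let $\mathcal{H}$ be an $(n,k,\ell,\lambda)$-system with $m$ edges and average degree $d$. Double counting $\ell$-subsets of edges gives $m=O(\lambda n^\ell)$ and $d=O(\lambda n^{\ell-1})$. If $d\le c_1\lambda n^{\ell-1}/\log n$ then Spencer's bound (\ref{equ:Spencer-bound}) already implies the desired conclusion, so we may assume $d=\Theta(\lambda n^{\ell-1})$. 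For $2\le r\le k-1$ write $P_r$ for the number of unordered pairs of distinct edges whose intersection has size exactly $r$; the $\lambda$-system hypothesis together with standard double counting (any $j$-set lies in $O(\lambda n^{(\ell-j)_+})$ edges) yields $P_r=O(\lambda^2 n^{2\ell-r})$ for $r<\ell$ and $P_r=O(\lambda^2 n^\ell)$ for $r\ge\ell$.

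\textbf{Main argument.} Sample each vertex independently with probability
\[
p=c_0\min\!\Bigl\{(\lambda n^{\ell-2})^{-1/(k-2)},\;\lambda^{-1}\Bigr\}
\]
for a small constant $c_0>0$, and let $W$ be the sampled set. A term-by-term check (whose binding cases are $r=2$ and $r=k-1$) gives $P_r p^{2k-r}\le c\,p^k m$ for every $r$, so by standard concentration we may assume $|W|\ge pn/2$, $|\mathcal{H}[W]|\ge p^k m/2$, and that $\mathcal{H}[W]$ contains at most $\tfrac{1}{4} p^k m$ pairs of edges sharing $\ge 2$ vertices. Deleting one edge from each such pair yields a linear $k$-graph $\mathcal{H}''$ on $|V(\mathcal{H}'')|\ge pn/2$ vertices with average degree $d''=\Theta(p^{k-1}d)$, and applying the theorem of Duke, Lefmann, and R\"odl gives
\[
\alpha(\mathcal{H})\ge\alpha(\mathcal{H}'')=\Omega\!\left(\frac{|V(\mathcal{H}'')|}{(d'')^{1/(k-1)}}(\log d'')^{1/(k-1)}\right)
=\Omega\!\left(\frac{n}{d^{1/(k-1)}}\bigl(\log(p^{k-1}d)\bigr)^{1/(k-1)}\right),
\]
since $p$ cancels in the prefactor. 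Using $d=\Theta(\lambda n^{\ell-1})$, the prefactor equals $\Theta\bigl((n^{k-\ell}/\lambda)^{1/(k-1)}\bigr)$, and substituting our $p$ into $p^{k-1}d$ together with the assumption $\lambda<n^{(\ell-1)/(k-2)-\delta}$ yields $p^{k-1}d\ge n^{(k-2)\delta}$, so $\log(p^{k-1}d)=\Omega(\log n)$.

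\textbf{Main obstacle.} The hypothesis $\lambda<n^{(\ell-1)/(k-2)-\delta}$ is dictated by the $r=k-1$ collision term: controlling pairs of edges that share $k-1$ vertices forces $p\le c/\lambda$, and with this $p$ the quantity $p^{k-1}d=\Theta(\lambda^{-(k-2)}n^{\ell-1})$ is polynomial in $n$ exactly when $\lambda\le n^{(\ell-1)/(k-2)-\delta}$. The main technical work is verifying that the intermediate collision constraints at $r\in\{3,\dots,k-2\}$ are automatically implied by the extremal ones at $r\in\{2,k-1\}$, so that no further restriction on $\lambda$ appears, and observing that the low-density case $d<\lambda n^{\ell-1}/\log n$ is absorbed directly by Spencer's theorem.
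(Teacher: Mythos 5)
Your proposal is correct and reaches the same bound, but by a genuinely different route than the paper. The paper applies the Duke--Lefmann--R\"odl uncrowding theorem (Theorem~\ref{THM:DLR-Uncrowd}) directly to $\mathcal{H}$: it sets $t = \lambda^{1/(k-1)}n^{(\ell-1)/(k-1)}$, verifies $\Delta(\mathcal{H}) \le t^{k-1}$ and $C_{\mathcal{H}}(2,j) \le n t^{2k-j-1-\epsilon}$ by double counting (using $\Delta_j(\mathcal{H}) = O(\lambda n^{\ell-j})$ for $j<\ell$ and $\Delta_j(\mathcal{H}) \le \lambda$ for $j\ge\ell$), and the hypothesis $\lambda < n^{(\ell-1)/(k-2)-\delta}$ is exactly what is needed to satisfy the binding $j=k-1$ codegree inequality for a suitably small $\epsilon$. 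You instead perform the sampling/alteration step yourself---choosing $p$ to tame the two extremal collision terms $r=2$ and $r=k-1$, deleting an edge per residual collision, and invoking the linear-hypergraph version (Theorem~\ref{THM:FM13} or the unnumbered DLR statement after $(\ref{equ:AKPSS-bound})$). Since the uncrowding theorem already internalizes such a deletion argument, the paper's route is shorter; yours makes the mechanism visible and, as you note, reveals why the $\lambda$ restriction is governed by $r=k-1$. Two points in your sketch deserve care: the simultaneous lower bound $|\mathcal{H}[W]|\ge p^k m/2$ requires a second-moment/Chebyshev estimate (whose variance control is again the $P_r$ bounds plus the $r=1$ term, using $np\to\infty$), not just Markov; and your formula $p^{k-1}d=\Theta(\lambda^{-(k-2)}n^{\ell-1})$ holds only in the $p=c_0/\lambda$ branch---in the other branch $p^{k-1}d=\Theta(\lambda^{-1/(k-2)}n^{(k-\ell)/(k-2)})$, which is still polynomially large under the hypothesis, so the conclusion $\log(p^{k-1}d)=\Omega(\log n)$ survives. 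Your verification that the intermediate constraints $r\in\{3,\dots,k-2\}$ are subsumed is correct.
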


{\bf Remark.}
It remains open to determine $f(n,k,\ell,\lambda)$ for
$\Omega\left(n^{\frac{\ell-1}{k-2}-o(1)}\right) = \lambda = O\left(n^{k-\ell}\right)$.

Since Theorem~\ref{THM:FF85} does not hold when $\lambda$ is not a constant,
our method of proving Theorems~\ref{THM:indep-ell-k-2} and \ref{THM:indep-number-ell-small}
cannot be extended to this case.

\subsection{Applications in Ramsey theory}
For a $k$-graph $\mathcal{F}$ the {\em Ramsey number $r_k(\mathcal{F},t)$} is the smallest integer $n$ such that every
$\mathcal{F}$-free $k$-graph on $n$ vertices has an independent set of size at least $t$.
Determining the minimum independence number of an $\mathcal{F}$-free $k$-graph on $n$ vertices
is essentially the same as determining the value of $r_k(\mathcal{F},t)$.
So, our results above can be applied to determine the Ramsey number of some hypergraphs.

First, Theorem~\ref{THM:indep-ell-k-2} and $(\ref{equ:upper-bound-g-Rodl})$ imply the following corollary.

\begin{corollary}\label{CORO:Ramsey-number-S-k-2}
Let $k \ge 4$ and $\lambda \ge 2$ be fixed integers.
Then 
\begin{align}
\Omega\left(\frac{t^{(k-1)/2}}{\left(\log t\right)^{1/2}}\right)
= r_{k}(S_{\lambda}(k-2),t)
= O\left(\frac{t^{(k-1)/2}}{\left(\log\log t\right)^{1/2}}\right). \notag
\end{align}
\end{corollary}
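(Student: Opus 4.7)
The plan is to translate the bounds on the minimum independence number $g(n,k,k-2,\lambda-1)$ into matching bounds on $r_{k}(S_{\lambda}(k-2),t)$ via the standard dictionary: an $n$-vertex $k$-graph is $S_{\lambda}(k-2)$-free if and only if it is an $(n,k,k-2,\lambda-1)$-omitting system.

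For the upper bound on $r_k$, I would first invoke the extension of Theorem~\ref{THM:indep-ell-k-2} promised in the remark on $(n,k,\ell,\lambda)$-omitting systems, which for every fixed $\lambda\ge 1$ yields
\begin{align*}
g(n,k,k-2,\lambda)=\Omega\!\left(n^{\frac{2}{k-1}}(\log\log n)^{\frac{1}{k-1}}\right).
\end{align*}
Taking $n=\lceil C\,t^{(k-1)/2}(\log\log t)^{-1/2}\rceil$ for a sufficiently large $C=C(k,\lambda)$, and using that $n$ and $t$ are polynomially related (so $\log\log n=\Theta(\log\log t)$), every $S_{\lambda}(k-2)$-free $k$-graph on $n$ vertices then contains an independent set of size at least $c\,n^{2/(k-1)}(\log\log n)^{1/(k-1)}\ge t$. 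This gives $r_{k}(S_{\lambda}(k-2),t)=O(t^{(k-1)/2}/(\log\log t)^{1/2})$.

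For the lower bound on $r_k$, I would use the upper half of $(\ref{equ:Rodl-Sinajov-bound})$: the R\"{o}dl--\v{S}i\v{n}ajov\'{a} construction supplies an $(n,k,k-2)$-system $\mathcal{H}$ with $\alpha(\mathcal{H})=O(n^{2/(k-1)}(\log n)^{1/(k-1)})$. Since every $(n,k,k-2)$-system has no two edges meeting in exactly $k-2$ vertices, $\mathcal{H}$ contains no copy of $S_{\lambda}(k-2)$ for any $\lambda\ge 2$. Picking $t$ to be just larger than $\alpha(\mathcal{H})$ forces $r_{k}(S_{\lambda}(k-2),t)>n$, and inverting the relation (again $\log n=\Theta(\log t)$) yields $r_{k}(S_{\lambda}(k-2),t)=\Omega(t^{(k-1)/2}/(\log t)^{1/2})$.

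The only nontrivial ingredient is the uniform extension of Theorem~\ref{THM:indep-ell-k-2} from $\lambda=1$ to arbitrary fixed $\lambda\ge 1$ without degrading the $(\log\log n)^{1/(k-1)}$ factor. The remark after the definition of $(n,k,\ell,\lambda)$-omitting systems asserts that this extension is routine, essentially because Theorem~\ref{THM:FF85} applies uniformly to all $S_{\lambda+1}(\ell)$-free hypergraphs for fixed $\lambda$, so the random greedy argument transfers verbatim with constants depending on $\lambda$. Everything else in the corollary is pure algebraic inversion of the $g$-bounds.
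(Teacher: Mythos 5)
Your proof is correct and matches the paper's intended argument: the corollary is stated there as an immediate consequence of Theorem~\ref{THM:indep-ell-k-2} (together with the remark that its conclusion extends to $(n,k,\ell,\lambda)$-omitting systems for any fixed $\lambda$, which is what you invoke for the upper bound on $r_k$) and the R\"odl--\v{S}i\v{n}ajov\'a bound $(\ref{equ:upper-bound-g-Rodl})$ for the lower bound. The algebraic inversions and the observation that every $(n,k,k-2)$-system is $S_{\lambda}(k-2)$-free for $\lambda\ge 2$ are exactly the routine steps the paper leaves implicit.
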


Similarly, Theorem~\ref{THM:indep-number-ell-small} gives the following corollary.

\begin{corollary}\label{CORO:Ramsey-number-S-ell-small}
Let $\ell\ge 2$, $k>2\ell+1$, and $\lambda \ge 2$ be fixed integers.
Then
\begin{align}
\Omega\left(\frac{t^{{2\ell}/{(\ell+1)}}}{\left(\log t\right)^{{2}/{(\ell+1)}}}\right)
= r_{k}(S_{\lambda}(\ell),t)
= O\left(\min\left\{t^{\frac{3\ell-1}{\ell+1}}, t^{\frac{k-1}{\ell+1}}\right\}\right). \notag
\end{align}
\end{corollary}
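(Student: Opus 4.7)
The plan is to translate Theorem~\ref{THM:indep-number-ell-small} into bounds on the Ramsey number $r_{k}(S_{\lambda}(\ell),t)$ by directly inverting the bounds on $g$. The key observation is that an $n$-vertex $k$-graph is $S_{\lambda}(\ell)$-free if and only if it is an $(n,k,\ell,\lambda-1)$-omitting system, so by definition
\begin{align}
r_{k}(S_{\lambda}(\ell),t) = \min\left\{n : g(n,k,\ell,\lambda-1) \ge t\right\}. \notag
\end{align}
Since $\lambda \ge 2$ is fixed, $\lambda-1 \ge 1$ is a fixed constant, so by the remark following the definition of $g(n,k,\ell,\lambda)$ the same asymptotic bounds as in Theorem~\ref{THM:indep-number-ell-small} apply to $g(n,k,\ell,\lambda-1)$.

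For the upper bounds on $r_{k}$, I would use the two lower bounds on $g$. From $g(n,k,\ell,\lambda-1) = \Omega(n^{(\ell+1)/(3\ell-1)})$, the inequality $g(n,k,\ell,\lambda-1) \ge t$ holds once $n \ge C_1\, t^{(3\ell-1)/(\ell+1)}$, giving $r_{k}(S_{\lambda}(\ell),t) = O(t^{(3\ell-1)/(\ell+1)})$. Applying the same inversion to $g(n,k,\ell,\lambda-1) = \Omega(n^{(\ell+1)/(k-1)})$ yields $r_{k}(S_{\lambda}(\ell),t) = O(t^{(k-1)/(\ell+1)})$. Taking the minimum of these two estimates produces the required upper bound.

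For the lower bound on $r_{k}$, I would invoke the construction underlying the upper bound of Theorem~\ref{THM:indep-number-ell-small}: for every $n$ there exists an $n$-vertex $S_{\lambda}(\ell)$-free $k$-graph $\mathcal{H}$ with $\alpha(\mathcal{H}) \le C_2\, n^{(\ell+1)/(2\ell)}(\log n)^{1/\ell}$. Choosing $n$ of order $t^{2\ell/(\ell+1)}/(\log t)^{2/(\ell+1)}$ and using $\log n = \Theta(\log t)$, one checks that the right-hand side is less than $t$, so $\mathcal{H}$ has no independent set of size $t$ and therefore $r_{k}(S_{\lambda}(\ell),t) > n = \Omega(t^{2\ell/(\ell+1)}/(\log t)^{2/(\ell+1)})$.

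There is no substantive obstacle here beyond routine inversion; the one point worth flagging is the passage from $\lambda = 1$ (i.e.\ the statement of Theorem~\ref{THM:indep-number-ell-small} for $(n,k,\ell)$-omitting systems) to general fixed $\lambda \ge 2$. This is exactly what the remark following the $(n,k,\ell,\lambda)$-definitions asserts, and it follows because Theorem~\ref{THM:FF85} holds for all $S_{\lambda}(\ell)$-free $k$-graphs with $\lambda$ constant, so that the proofs of both the upper and lower bounds for $g$ carry over without change.
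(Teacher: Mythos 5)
Your proposal is correct and follows what the paper intends: the paper states the corollary without proof (``Similarly, Theorem~\ref{THM:indep-number-ell-small} gives the following corollary''), and the routine inversion you describe is precisely the intended argument. One small simplification worth noting: for the lower bound on $r_{k}$ you do not actually need the remark extending the bounds to general $\lambda$, since the paper's construction is $S_{2}(\ell)$-free and hence automatically $S_{\lambda}(\ell)$-free for all $\lambda \ge 2$; the extension is only needed for the lower bound on $g(n,k,\ell,\lambda-1)$, which underlies the upper bound on $r_{k}$.
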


{\bf Remark.}
According to Remark $(a)$ after Theorem~\ref{THM:indep-number-ell-small}, the upper bound $t^{\frac{3\ell-1}{\ell+1}}$
above can be improved to $t^{\frac{3+\sqrt{5}}{2}+o_{\ell}(1)} \sim t^{2.61803+o_{\ell}(1)}$.

The following result about $r_{k}(S_{\lambda}^k,t)$ follows from a more general result of Loh~\cite{Loh09}.

\begin{theorem}[Loh~\cite{Loh09}]\label{THM:Ramsey-linear-tree}
Let $t \ge k \ge 2$, $t-1 = q(k-1)+r$ for some $q, r\in \mathbb{N}$ with $0 \le r \le k-2$.
Then for every $\lambda \ge 2$
$$\lambda q (k-1) + r + 1 \le r_{k}(S_{\lambda}^k,t) \le \lambda q(k-1)+ \lambda r + 1.$$
In particular, $r_{k}(S_{\lambda}^k,t) = \lambda(t-1)+1$ whenever $(k-1)\mid (t-1)$.
\end{theorem}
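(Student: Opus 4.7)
The plan is to handle the two bounds in Theorem~\ref{THM:Ramsey-linear-tree} separately; the ``in particular'' clause then follows immediately by substituting $r=0$, since both bounds collapse to $\lambda(t-1)+1$. For the lower bound $r_k(S_\lambda^k,t) \ge \lambda q(k-1)+r+1$, I would exhibit an $S_\lambda^k$-free $k$-graph $\mathcal{H}$ on $n = \lambda q(k-1)+r$ vertices with $\alpha(\mathcal{H}) = q(k-1)+r = t-1$. Partition $V(\mathcal{H})$ into $q$ blocks $B_1,\ldots,B_q$ each of size $\lambda(k-1)$, plus $r$ isolated vertices, and take the edges to be every $k$-subset lying inside a single block. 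Since no edge crosses blocks and a copy of $S_\lambda^k$ requires $1+\lambda(k-1) > |B_i|$ vertices, $\mathcal{H}$ is $S_\lambda^k$-free. Inside each block every $k$-subset is an edge, so the largest independent set meeting only $B_i$ has size exactly $k-1$; adding the $r$ isolated vertices gives $\alpha(\mathcal{H}) = q(k-1)+r$, as required.

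For the upper bound it suffices to prove the clean statement that every $S_\lambda^k$-free $k$-graph $\mathcal{H}$ on $n$ vertices satisfies $\alpha(\mathcal{H}) \ge \lceil n/\lambda\rceil$; applying this to $n = \lambda q(k-1) + \lambda r + 1 = \lambda(t-1)+1$ produces an independent set of size $t$. I would quote this directly from the more general result of Loh~\cite{Loh09}. The key structural input is the equivalence: $\mathcal{H}$ is $S_\lambda^k$-free if and only if, for every vertex $v$, the link $L_v$ is a $(k-1)$-uniform hypergraph of matching number at most $\lambda-1$.

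Loh's argument grows an independent set $I$ iteratively: at each step one vertex $v$ is added to $I$ and at most $\lambda-1$ further vertices are ``blocked'' (removed from further consideration), so that no edge of $\mathcal{H}$ can ever be completed inside $I$. The accounting $|I|\cdot\lambda \ge n$ then yields $|I| \ge \lceil n/\lambda \rceil$. The main obstacle is keeping the number of fresh blockers down to only $\lambda-1$ per iteration: a naive vertex cover of $L_v$ has size up to $(\lambda-1)(k-1)$, which is far too wasteful. The trick is to process the vertices in a carefully chosen order so that, when $v$ is added to $I$, a large part of the required cover of $L_v$ is already provided by vertices previously placed in $I$ or blocked in earlier steps, leaving only $\lambda-1$ genuinely new vertices to block.
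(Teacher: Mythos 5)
The paper states this result as a quoted theorem of Loh and gives no proof of its own, so there is nothing internal to compare against; judged on its merits, your argument is essentially correct and is surely the intended one. Your lower-bound construction (disjoint complete $k$-graphs on $\lambda(k-1)$ vertices each, plus $r$ isolated vertices) is the standard extremal example: every copy of $S_\lambda^k$ would have all $\lambda$ of its edges inside the block containing its center, which is impossible since $S_\lambda^k$ spans $\lambda(k-1)+1$ vertices, while each block contributes at most $k-1$ vertices to an independent set, giving $\alpha = q(k-1)+r = t-1$ on $\lambda q(k-1)+r$ vertices. The reduction of the upper bound to the statement that every $S_\lambda^k$-free $k$-graph on $n$ vertices satisfies $\alpha \ge \lceil n/\lambda\rceil$ is also the right move, and that statement is exactly the ``more general result of Loh'' that the paper itself invokes elsewhere (in the form $g(n,k,1,\lambda) \ge n/(\lambda+1)$), so citing it is legitimate here. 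Your observation that $S_\lambda^k$-freeness is equivalent to every vertex link having matching number at most $\lambda-1$ is correct and is the right structural reformulation.

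Be aware, though, that your closing paragraph is a heuristic and not a proof: the assertion that the vertices can be processed in an order under which each greedy step blocks only $\lambda-1$ new vertices is precisely the nontrivial content of Loh's theorem, and, as you yourself note, the naive version (take a vertex cover of the link $L_v$) costs up to $(\lambda-1)(k-1)$ vertices per step and the accounting collapses. If you were required to make the upper bound self-contained rather than cite Loh, that paragraph would be a genuine gap. As written, with the key inequality explicitly quoted, the proposal matches the level of rigor of the paper's own treatment of this theorem.
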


The $k$-$Fan$, denoted by $F^k$, is the $k$-graph consisting of $k+1$ edges $E_1,\ldots,E_k,E$
such that $E_{i} \cap E_j = v$ for all $1\le i < j \le k$,
where $v \not\in E$, and $|E_i \cap E| = 1$ for $1\le i \le k$.
In other words, $F^k$ is obtained from $S_{k}^{k}$ by adding an edge omitting $v$ that intersects  each edge of $S_{k}^{k}$.
It is easy to see that $F^2$ is just the triangle $K_3$.
The $k$-graph $F^k$ was first introduced by Mubayi and Pikhurko \cite{MP07} in order to extend Mantel's theorem to hypergraphs.
Unlike the case  $k=2$, where it is well known that
$r_2(K_3, t) = \Theta\left({t^2}/{\log t}\right)$ (e.g. see \cite{AKS80,Kim95}),
the following result shows that $r_k(F^k, t) = \Theta(t^2)$ for all $k\ge 3$.

\begin{theorem}\label{THM:Ramsey-k-fan}
Suppose that $t \ge k \ge 3$.
Then
\begin{align}
\left\lfloor \frac{t}{2} \right\rfloor \left\lfloor \frac{t-1}{2(k-2)} \right\rfloor < r_{k}(F^{k},t) \le t(t-1)+1. \notag
\end{align}
\end{theorem}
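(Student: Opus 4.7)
The plan is to prove the two bounds by independent arguments.

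For the upper bound $r_k(F^k,t)\le t(t-1)+1$, I would argue by a dichotomy based on the star structure at each vertex. For each $v\in V(\mathcal{H})$, let $\mu(v)$ be the largest integer $\mu$ such that $\mathcal{H}$ contains $S_\mu^k$ with center $v$, equivalently the matching number of the link $L_v$ (viewed as a $(k-1)$-graph). First suppose some vertex $v$ satisfies $\mu(v)\ge t$, and choose edges $E_1,\dots,E_t\in\mathcal{H}$ with $v\in\bigcap_i E_i$ and $E_i\cap E_j=\{v\}$ whenever $i\ne j$. Picking any $a_i\in E_i\setminus\{v\}$, the set $\{a_1,\dots,a_t\}$ is independent: if $\{a_{i_1},\dots,a_{i_k}\}\in\mathcal{H}$ were an edge, then together with $E_{i_1},\dots,E_{i_k}$ it would form a copy of $F^k$, because the $E_{i_j}$'s pairwise intersect in $\{v\}$ (the fan) and $\{a_{i_1},\dots,a_{i_k}\}$ misses $v$ and meets each $E_{i_j}\setminus\{v\}$ in the single vertex $a_{i_j}$ (the transversal edge), contradicting $F^k$-freeness. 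If instead $\mu(v)\le t-1$ for every $v$, then $\mathcal{H}$ is $S_t^k$-free, and applying Theorem~\ref{THM:Ramsey-linear-tree} with $\lambda=t$, writing $t-1=q(k-1)+r$, yields $r_k(S_t^k,t)\le t\cdot q(k-1)+t\cdot r+1 = t(t-1)+1$, so $\mathcal{H}$ already has an independent set of size $t$.

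For the lower bound $r_k(F^k,t) > \lfloor t/2\rfloor\lfloor (t-1)/(2(k-2))\rfloor$, I would exhibit an $F^k$-free $k$-graph on $n=\lfloor t/2\rfloor\lfloor (t-1)/(2(k-2))\rfloor$ vertices with independence number at most $t-1$. A natural starting point is a block construction: partition $V$ into $\lfloor (t-1)/(2(k-2))\rfloor$ blocks of size $\lfloor t/2\rfloor$ and place $k$-edges that span multiple blocks in a controlled pattern (for instance, $k-1$ vertices from one block plus one vertex from another, or a more refined distribution). The parameters should be chosen so that every independent set either has bounded size per block (totaling at most $\lfloor t/2\rfloor(k-2)$) or is concentrated in a single block of size at most $\lfloor (t-1)/(2(k-2))\rfloor$, with both alternatives giving fewer than $t$ vertices.

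The main obstacle lies in the lower-bound construction: a complete $k$-graph on any block of size $\ge k(k-1)+1$ already contains $F^k$, so for large $t$ the blocks cannot simply be taken complete. A careful choice of inter-block edges (or a blow-up of a small $F^k$-free extremal configuration) is needed to simultaneously avoid every fan-plus-transversal configuration and keep $\alpha(\mathcal{H})\le t-1$. By contrast, the upper bound reduces cleanly to Theorem~\ref{THM:Ramsey-linear-tree} via the star dichotomy and is essentially a one-paragraph argument.
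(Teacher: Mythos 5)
Your upper bound argument is correct and is essentially the paper's: both reduce to showing $r_k(F^k,t)\le r_k(S_t^k,t)$ and then invoke Theorem~\ref{THM:Ramsey-linear-tree} with $\lambda=t$. Your "star dichotomy" is just the contrapositive packaging of the paper's argument (if $\mathcal{H}$ has no independent $t$-set then it contains $S_t^k$, and then a transversal of the petals is a $t$-set, hence contains an edge, hence yields $F^k$), and your verification that the transversal edge together with $k$ of the petals forms $F^k$ is sound.

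The lower bound, however, is a genuine gap: you have no construction, only a sketch of a block partition that you yourself flag as problematic, and the difficulty you identify (complete blocks of size $\ge k(k-1)+1$ already contain $F^k$) is real and is not resolved by your outline. In particular, nothing in your plan explains how to get independence number below $t$ on $\Theta(t^2)$ vertices while avoiding every fan-plus-transversal configuration. The paper's construction is not a block blow-up but the ``$L$-construction'' $\mathcal{L}_{m,n}$ on the grid $[m]\times[n]$ with $m=\lfloor t/2\rfloor$, $n=\lfloor (t-1)/(2(k-2))\rfloor$: the edges are the $L$-shaped sets $\{(x_1,y_1),(x_1,y_2),\dots,(x_{k-1},y_2)\}$ with $x_1<\cdots<x_{k-1}$ and $y_1>y_2$. $F^k$-freeness follows from a case analysis on where the apex $v$ of the fan can lie relative to a transversal edge $E$ (only in two ``quadrants'' or on the $L$-path itself, and each case forces two fan edges to intersect in more than $\{v\}$ or $k-1$ of them to share a second vertex). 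The independence bound $\alpha(\mathcal{L}_{m,n})<m+(k-2)n<t$ comes from the observation that deleting the topmost vertex of each column and the $k-2$ rightmost vertices of each row from an independent set must exhaust it. Without this (or some equally explicit) construction, the left-hand inequality of the theorem is unproven.
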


As $t \rightarrow \infty$, it remains open to determine $\lim r_k(F^k, t)/t^2$.

In Section~\ref{SEC:omit-system-ell-big}, we prove Theorem~\ref{THM:indep-ell-k-2}.
In Section~\ref{SEC:omit-system-ell-small}, we prove Theorem~\ref{THM:indep-number-ell-small}.
In Section~\ref{SEC:indepence-number-design}, we prove Theorem~\ref{THM:independence-number-lambda-design}.
In Section~\ref{SEC:linear-tree}, we prove Theorem~\ref{THM:Ramsey-k-fan}.

\section{Proof of Theorem~\ref{THM:indep-ell-k-2}}\label{SEC:omit-system-ell-big}
In this section we prove Theorem~\ref{THM:indep-ell-k-2}.
Let us show some preliminary results first.

\subsection{Preliminaries}
For a $k$-graph $\mathcal{H}$ and $i\in[k-1]$ the {\em $i$-th shadow} of $\mathcal{H}$ is
\begin{align}
\partial_{i}\mathcal{H} = \left\{A\in \binom{V(\mathcal{H})}{k-i}: \exists E\in \mathcal{H} \text{ such that } A\subset E \right\}. \notag
\end{align}
The {\em shadow} of $\mathcal{H}$ is $\partial\mathcal{H} = \partial_{1}\mathcal{H}$.
For a set $S \subset V(\mathcal{H})$ the {\em neighborhood} of $S$ in $\mathcal{H}$ is
\begin{align}
N_{\mathcal{H}}(S) = \left\{v\in V(\mathcal{H})\setminus S: \exists E\in \mathcal{H} \text{ such that } S\cup \{v\}\subset E\right\}, \notag
\end{align}
the {\em link} of $S$ in $\mathcal{H}$ is
\begin{align}
L_{\mathcal{H}}(S) = \left\{E\setminus S: E \in \mathcal{H} \text{ and } S\subset E\right\}, \notag
\end{align}
and $d_{\mathcal{H}}(S) = |L_{\mathcal{H}}(S)|$ is the {\em degree} of $S$ in $\mathcal{H}$.
For $i\in[k-1]$ the {\em maximum $i$-degree} of $\mathcal{H}$ is
\begin{align}
\Delta_i(\mathcal{H}) = \max\left\{d_{\mathcal{H}}(A): A\in \binom{V(\mathcal{H})}{i}\right\}, \notag
\end{align}
and note that $\Delta(\mathcal{H}) = \Delta_{1}(\mathcal{H})$.

For a pair of distinct vertices $u,v \in V(\mathcal{H})$ the {\em $(k-1)$-codegree} of $u$ and $v$
is the number of $(k-1)$-sets $S\subset V(\mathcal{H})$
such that $S\cup \{u\} \in \mathcal{H}$ and $S\cup \{v\} \in \mathcal{H}$.
Denoted by $\Gamma(\mathcal{H})$ the maximum $(k-1)$-codegree of $\mathcal{H}$.

\textbf{The random greedy independent set algorithm.}
We begin with $\mathcal{H}(0) = \mathcal{H}, V(0) = V(\mathcal{H})$ and $I(0) = \emptyset$.
Given independent set $I(i)$ and hypergraph $\mathcal{H}(i)$ on vertex set $V(i)$,
a vertex $v \in V(i)$ is chosen uniformly at random and added to $I(i)$ to form $I(i+1)$.
The vertex set $V(i+1)$ is set equal to $V(i)$ less $v$ and all vertices $u$ such that $\{u,v\}$ is an edge in $\mathcal{H}(i)$.
The hypergraph $\mathcal{H}(i+1)$ is formed form $\mathcal{H}_{i}$ by
\begin{itemize}
\item[1.] removing $v$ from all edges of size at least three in $\mathcal{H}(i)$ that contain $v$, and
\item[2.] removing every edge that contains a vertex $u$ such that the pair $\{u,v\}$ is an edge of $\mathcal{H}(i)$.
\end{itemize}
The process terminates when $V(i) = \emptyset$. At this point $I(i)$ is a maximal independent set in $\mathcal{H}$.
Let $i_{\max}$ denote the step where the algorithm terminates.

In \cite{BB16}, Bennett and Bohman analyzed the random greedy independent set algorithm
using the differential equation method,
and they proved that if a $k$-graph satisfies certain degree and codegree conditions,
then the random greedy independent set algorithm produces a large independent set with high probability.

\begin{theorem}[Bennett-Bohman \cite{BB16}]\label{THM:BB16-A}
Let $k$ and $\epsilon > 0$ be fixed.
Let $\mathcal{H}$ be a $D$-regular $k$-graph on $n$ vertices such that $D > n^{\epsilon}$.
If
\begin{align}
\Delta_{i}(\mathcal{H}) < D^{\frac{k-i}{k-1}-\epsilon}\quad{\rm for}\quad 2 \le i \le k-1,
\quad{\rm and}\quad
\Gamma(\mathcal{H}) < D^{1-\epsilon}, \notag
\end{align}
then the random greedy independent set algorithm produces an independent set $I$ in $\mathcal{H}$
of size $\Omega\left( \left({\log n}\right)^{{1}/{(k-1)}} \cdot n/D^{1/(k-1)}\right)$
with probability $1-o(1)$.
\end{theorem}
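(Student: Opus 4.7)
The plan is to analyze the random greedy independent set algorithm by the differential equation method. I will identify a family of random variables whose normalized trajectories satisfy a coupled system of ODEs, solve that system to predict the lifetime of the process, and then prove dynamic concentration using Freedman's martingale inequality.

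The natural rescaled time is $\tau = i \cdot D^{1/(k-1)}/n$, chosen so that the target independent set size $\Omega\bigl((\log n)^{1/(k-1)}\, n/D^{1/(k-1)}\bigr)$ is reached once $\tau$ grows to order $(\log n)^{1/(k-1)}$. The key variables to track are $|V(i)|$; the number of edges of each size $j \in \{2, \ldots, k\}$ in $\mathcal{H}(i)$; and for each surviving set $S$ of size at most $k-1$, the degree $d_{\mathcal{H}(i)}(S)$. Under a chaos approximation one expects $|V(i)| \approx q(\tau)\, n$ where $q$ decays like $\exp(-c_k \tau^{k-1})$, and the average pair-degree of a surviving vertex grows like $D\cdot\tau^{k-2}$. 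This heuristic follows from a binomial chain: an original $k$-edge survives at size $k-j$ with probability roughly $\binom{k}{k-j}p(\tau)^{k-j}(1-p(\tau))^j$ where $p(\tau)=1-q(\tau)$ is the consumed fraction, and aggregating over edges yields the coupled ODE system whose solution tracks $q$. Since every step contributes one vertex to $I(i)$, surviving for $\tau_\ast \asymp (\log n)^{1/(k-1)}$ units of rescaled time delivers the advertised bound.

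The main obstacle is dynamic concentration: proving that every tracked variable $X_S(i)$ stays within a small relative error $\eta(\tau)$ of its predicted value $x_S(\tau)$ for every step up to the stopping time. I would set up two supermartingales $Y_S^{\pm}(i) = \pm\bigl(X_S(i) - x_S(\tau_i)\bigr) - \eta(\tau_i)\,x_S(\tau_i)$ for each $S$ and apply Freedman's inequality. The hypothesis $\Delta_j(\mathcal{H}) < D^{(k-j)/(k-1)-\epsilon}$ is used to bound the one-step differences $|Y_S^{\pm}(i+1) - Y_S^{\pm}(i)|$: removing a random $v \in V(i)$ changes $d_{\mathcal{H}(i)}(S)$ by at most $d_{\mathcal{H}(i)}(S \cup \{v\})$, and this estimate propagates through the evolving hypergraph by induction on $j$. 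The codegree hypothesis $\Gamma(\mathcal{H}) < D^{1-\epsilon}$ bounds the quadratic variation of $|V(i)|$ by limiting how many vertices can be jointly killed by a single pick, since two distinct vertices are simultaneously removed only when they share a common pair-neighbor in the current hypergraph. After tuning the envelope $\eta(\tau)$ so that Freedman's inequality gives failure probability $n^{-\omega(1)}$ per variable, a union bound over the $n^{O(k)}$ tracked variables and $O(n)$ steps yields the dynamic concentration.

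The truly delicate point is the design of the envelope $\eta$: it must be small enough that the ODE right-hand side is correctly approximated at every intermediate time, large enough to absorb both the martingale fluctuations and the accumulated lower-level errors propagated from the induction on $j$, and it must degrade smoothly as $\tau$ approaches $(\log n)^{1/(k-1)}$, where the trajectories themselves are shrinking rapidly. Balancing these competing constraints --- essentially, stopping just at the edge where the error envelope would overtake the main term --- is what pins down the stopping time $\tau_\ast \asymp (\log n)^{1/(k-1)}$ and hence the exponent $1/(k-1)$ on the logarithm in Theorem~\ref{THM:BB16-A}.
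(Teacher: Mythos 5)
This theorem is not proved in the paper at all: it is quoted from Bennett--Bohman \cite{BB16} and used as a black box, so there is no in-paper argument to compare against. Your outline does correctly identify the method of that reference --- the differential equation method, with dynamic concentration established via self-correcting supermartingales and Freedman's inequality --- so the overall strategy is the right one.

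As a proof, though, what you have written is a plan rather than an argument, and every step you defer is precisely where the content of the theorem lives. Concretely: (i) you never write down or solve the ODE system, so the trajectory functions $x_S(\tau)$ and the claimed form $q(\tau)\approx\exp(-c_k\tau^{k-1})$ remain heuristics; (ii) you never specify the error envelope $\eta(\tau)$, and you say yourself that balancing the envelope against the shrinking trajectories "is what pins down" the stopping time $\tau_*\asymp(\log n)^{1/(k-1)}$ --- but that balancing is the theorem, since without it you only recover the Spencer-type bound without the logarithmic gain; (iii) you never verify the supermartingale inequalities, which is the only place the specific exponents in the hypotheses $\Delta_i(\mathcal{H})<D^{\frac{k-i}{k-1}-\epsilon}$ and $\Gamma(\mathcal{H})<D^{1-\epsilon}$ are actually used, so nothing in your write-up explains why those particular exponents suffice. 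There is also a structural problem with your choice of tracked variables: you propose two-sided concentration of $d_{\mathcal{H}(i)}(S)$ around a predicted value for every set $S$ with $|S|\le k-1$, but for $|S|\ge 2$ there is no predicted value --- the hypergraph is only assumed $D$-regular at the vertex level, and the initial degrees $d_{\mathcal{H}}(S)$ for larger $S$ are arbitrary subject to the upper bounds $\Delta_i$. One must instead track quantities that are initially uniform (e.g.\ the number of edges of each residual size containing a fixed surviving vertex) and use the $\Delta_i$ and $\Gamma$ hypotheses only to control one-step increments and quadratic variation. Until the trajectories, the envelope, and the increment/variance estimates are made explicit, the exponent $1/(k-1)$ on the logarithm is asserted rather than derived.
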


The lower bound on independence number in Theorem~\ref{THM:BB16-A} can easily be proved by applying a theorem of Duke-Lefmann-R\"odl~\cite{DLR95} (see Theorem~\ref{THM:DLR-Uncrowd}), so the main novelty of Theorem~\ref{THM:BB16-A} is the fact that the random greedy independent set algorithm produces an independent set of this size with high probability.

Let $S\subset V(\mathcal{H})$ be a set of bounded size $s$ such that $S$ contains no edge in $\mathcal{H}$.
A nice property of the random greedy independent set algorithm is that
$S$ is contained in the set $I(i)$ with probability $(1+o(1))\left(i/n\right)^{s}$,
which is almost the probability that $S$ is contained in a random $i$-subset of $V(\mathcal{H})$.

Using this property we can easily control the size of the induced subgraph of $\mathcal{G}$ on $I(i)$,
where $\mathcal{G}$ is a hypergraph that has the same vertex set with $\mathcal{H}$.

\begin{proposition}[Bennett-Bohman \cite{BB16}]\label{PROP:BB16}
Let $\mathcal{H}$ be a hypergraph that satisfies the conditions in Theorem~\ref{THM:BB16-A} and
$\mathcal{G}$ be a $k'$-graph on $V(\mathcal{H})$ (i.e. $\mathcal{G}$ and $\mathcal{H}$ are on the same vertex set).
If $i \le i_{\max}$ is fixed,
then the expected number of edges of $\mathcal{G}$ contained in $I(i)$ is at most $(1+o(1))\left(i/n\right)^{k'}\cdot |\mathcal{G}|$.
\end{proposition}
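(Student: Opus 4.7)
The plan is to reduce the proposition to a straightforward application of linearity of expectation combined with the stated property of the random greedy independent set algorithm (the one that precedes the proposition, i.e.\ that any fixed-size $\mathcal{H}$-independent set $S$ of size $s$ satisfies $\Pr[S\subseteq I(i)] = (1+o(1))(i/n)^s$).

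First, let $X_i$ denote the number of edges of $\mathcal{G}$ contained in $I(i)$, so
\begin{align}
\mathbb{E}[X_i] \;=\; \sum_{E\in \mathcal{G}} \Pr\bigl[E\subseteq I(i)\bigr]. \notag
\end{align}
Next, I would split the sum according to whether the edge $E\in \mathcal{G}$ (viewed as a $k'$-subset of $V(\mathcal{H})$) happens to contain some edge of $\mathcal{H}$. If $E$ contains an edge of $\mathcal{H}$ as a subset, then since $I(i)$ is independent in $\mathcal{H}$ at every step of the algorithm, we trivially have $\Pr[E\subseteq I(i)]=0$. Otherwise, $E$ is a set of bounded size $k'$ containing no edge of $\mathcal{H}$, and the stated property of the random greedy independent set algorithm gives $\Pr[E\subseteq I(i)] = (1+o(1))(i/n)^{k'}$.

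In both cases we have $\Pr[E\subseteq I(i)] \le (1+o(1))(i/n)^{k'}$, so summing over all $E\in \mathcal{G}$ yields
\begin{align}
\mathbb{E}[X_i] \;\le\; (1+o(1))\left(\frac{i}{n}\right)^{k'}|\mathcal{G}|, \notag
\end{align}
as desired. The fact that $k'$ is fixed (independent of $n$) is exactly what allows the quoted probability estimate to apply uniformly, since that estimate requires the conditioning set to have \emph{bounded} size.

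There is essentially no obstacle here: the entire content of the proposition is packaged into the input estimate $\Pr[S\subseteq I(i)] = (1+o(1))(i/n)^s$ for $\mathcal{H}$-independent $S$ of bounded size. The only very mild subtlety is handling edges of $\mathcal{G}$ that happen to contain an edge of $\mathcal{H}$, which one disposes of by the independence of $I(i)$. Everything else is a one-line application of linearity of expectation.
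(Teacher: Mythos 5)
Your proof is correct and takes exactly the route the paper implicitly has in mind: the paper states the ``nice property'' ($\Pr[S\subseteq I(i)]=(1+o(1))(i/n)^s$ for $\mathcal{H}$-independent $S$ of bounded size) and then asserts the proposition as an easy consequence, citing [BB16] rather than spelling out the one-line argument. Your linearity-of-expectation computation, together with the observation that edges of $\mathcal{G}$ containing an edge of $\mathcal{H}$ contribute probability zero, is precisely that easy consequence; the only point worth being explicit about (which you do note correctly) is that the $o(1)$ term must be uniform over all $k'$-subsets, which is why $k'$ being fixed matters.
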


For $2\le j \le k-1$ and two edges $E,E'$ in a $k$-graph $\mathcal{H}$ we say $\{E,E'\}$ is a {\em $(2,j)$-cycle}
if $|E\cap E'| = j$.
Denote by $C_{\mathcal{H}}(2,j)$ the number of $(2,j)$-cycles in $\mathcal{H}$.
A hypergraph is {\em linear} if every pair of edges has at most one vertex in common.
It is easy to see that $\mathcal{H}$ is linear iff $C_{\mathcal{H}}(2,j) = 0$ for $2\le j \le k-1$.
The next theorem on the independence number of linear hypergraphs is due to Frieze and Mubayi \cite{FM13}.

\begin{theorem}[Frieze-Mubayi \cite{FM13}]\label{THM:FM13}
Suppose that $\mathcal{H}$ is a linear $k$-graph with $n$ vertices and average degree $d$.
Then $\alpha(\mathcal{H}) = \Omega\left(\left(\log d\right)^{1/(k-1)}\cdot {n}/{d^{1/(k-1)}}\right)$.
\end{theorem}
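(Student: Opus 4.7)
The plan is to deduce Theorem~\ref{THM:FM13} from the random greedy independent set analysis in Theorem~\ref{THM:BB16-A} after a mild reduction from the average-degree setting to a nearly regular maximum-degree setting. The starting observation is that linearity forces the degree and codegree hypotheses of Theorem~\ref{THM:BB16-A} to hold in an extremely strong form. Since any two distinct edges meet in at most one vertex, every $i$-set with $i \geq 2$ lies in at most one edge, so $\Delta_{i}(\mathcal{H}) \leq 1$ for every $2 \leq i \leq k-1$. Moreover, if $u \neq v$ and $S$ is a $(k-1)$-set disjoint from $\{u,v\}$ with both $S \cup \{u\}$ and $S \cup \{v\}$ in $\mathcal{H}$, then these two edges share the set $S$ of size $k-1 \geq 2$, contradicting linearity; hence $\Gamma(\mathcal{H}) = 0$ for $k \geq 3$. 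Thus the hypotheses of Theorem~\ref{THM:BB16-A} are satisfied with enormous slack on any $D$-regular linear $k$-graph, provided $D$ is large enough.

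To pass from the average-degree setting to a regular one, I would first restrict to the set $U$ of vertices of degree at most $10d$. Since $\sum_{v} d_{\mathcal{H}}(v) = dn$, Markov's inequality gives $|U| \geq 9n/10$, and the induced subhypergraph $\mathcal{H}[U]$ is a linear $k$-graph on $\Omega(n)$ vertices with maximum degree at most $D := 10d$. I would then embed $\mathcal{H}[U]$ into a $D$-regular linear $k$-graph $\widetilde{\mathcal{H}}$ on $\Theta(n)$ vertices by greedily adding fresh ``padding'' vertices and edges, where each padding edge uses at most one pre-existing vertex, so that linearity is preserved throughout the construction. Applying Theorem~\ref{THM:BB16-A} to $\widetilde{\mathcal{H}}$ with any fixed $\epsilon < 1$ then produces an independent set of size $\Omega\bigl(n (\log d)^{1/(k-1)} / d^{1/(k-1)}\bigr)$ with high probability; intersecting with $V(\mathcal{H})$ loses at most a constant factor and yields the required bound.

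The main obstacle is the linearity-preserving regularization step: the new edges must raise every old vertex to degree exactly $D$ while ensuring that no two edges of $\widetilde{\mathcal{H}}$ (old or new) share two vertices. This is technical but standard bookkeeping. One can also bypass the regularization entirely by applying the uncrowded hypergraph theorem of Duke--Lefmann--R\"{o}dl \cite{DLR95} directly to $\mathcal{H}[U]$, since that result requires only a maximum-degree hypothesis and immediately delivers the same estimate; in particular the case $d = O(1)$, where Theorem~\ref{THM:BB16-A}'s lower bound on $D$ does not hold, is covered trivially by Spencer's bound~$(\ref{equ:Spencer-bound})$ and needs no separate treatment.
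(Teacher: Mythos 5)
The paper does not prove Theorem~\ref{THM:FM13}; it is quoted as a black box from \cite{FM13}, so there is no internal proof to compare against. Your proposal is essentially sound, but only because of the Duke--Lefmann--R\"odl bypass you mention at the end; as written, that bypass \emph{is} the proof, and the route via Theorem~\ref{THM:BB16-A} that you present as primary has two real gaps. First, Theorem~\ref{THM:BB16-A} requires $D > n^{\epsilon}$ for a fixed $\epsilon>0$, so after setting $D = 10d$ the argument fails whenever $\omega(1) = d = n^{o(1)}$ (say $d = \log n$); this range is not covered by Spencer's bound either, since there $(\log d)^{1/(k-1)} \to \infty$. (Also, $\epsilon$ must be taken smaller than $1/(k-1)$, not merely $\epsilon<1$, for the hypothesis $\Delta_{k-1}(\mathcal{H}) \le 1 < D^{1/(k-1)-\epsilon}$ to be usable.) Second, the linearity-preserving regularization is not routine bookkeeping: raising every old vertex to degree exactly $D$ by new edges that meet the old vertex set in one point still leaves the fresh vertices deficient, and completing them to degree $D$ without ever creating two edges sharing two vertices is precisely the content of the paper's Lemma~\ref{LEMMA:exist-regular-linear-r-gp} together with the Cartesian-product device $\mathcal{H}\Box\mathcal{F}$ used in the proof of Theorem~\ref{THM:common-indep-set}; if you take this route you should invoke that machinery rather than gesture at it. By contrast, the direct application of Theorem~\ref{THM:DLR-Uncrowd} to $\mathcal{H}[U]$ is clean and complete: linearity gives $C_{\mathcal{H}[U]}(2,j)=0$ for all $2\le j\le k-1$, the choice $t=(10d)^{1/(k-1)}$ satisfies $t\gg k$ once $d$ exceeds a constant depending on $k$, and bounded $d$ is handled by $(\ref{equ:Spencer-bound})$. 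I recommend presenting the DLR argument as the proof and dropping the regularization route.
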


For a (not necessarily uniform) hypergraph $\mathcal{H}$ on $n$ vertices (assuming that $V(\mathcal{H}) = [n]$)
and a family $\mathcal{F} = \{\mathcal{G}_1,\ldots,\mathcal{G}_n\}$ of $m$-vertex $k$-graphs with
$V(\mathcal{G}_1) = \cdots = V(\mathcal{G}_n) = V_{\mathcal{F}}$
the {\em Cartesian product} of $\mathcal{H}$ and $\mathcal{F}$, denoted by $\mathcal{H}\Box\mathcal{F}$,
is a hypergraph on $V(\mathcal{H})\times V_{\mathcal{F}}$ and
\begin{align}
\mathcal{H}\Box\mathcal{F}
= \left\{(E,v): E\in \mathcal{H} \text{ and } v\in V_{\mathcal{F}}\right\}
   \cup \left\{(i,F): i\in[n] \text{ and } F\in \mathcal{G}_i\right\}. \notag
\end{align}

Since the hypergraphs we considered here are not necessarily regular,
Theorem~\ref{THM:BB16-A} cannot be applied directly to our situations.
To overcome this issue we use an adaption of a trick used by Shearer in \cite{SH95},
that is, for every nonregular hypergraph $\mathcal{H}$ we take the Cartesian product
of $\mathcal{H}$ and a family of linear hypergraphs to get a new hypergraph $\widehat{\mathcal{H}}$ that is regular.
Then we apply Theorem~\ref{THM:BB16-A} to $\widehat{\mathcal{H}}$ to get a large independent set,
and by the Pigeonhole principle, this ensures that $\mathcal{H}$ has a large independent set.

First, we need the following theorem to show the existence of sparse regular linear hypergraphs.

Given two $k$-graphs $\mathcal{H}_1$ and $\mathcal{H}_2$ with the same number of vertices
a {\em packing} of $\mathcal{H}_1$ and $\mathcal{H}_2$ is a bijection $\phi: V(\mathcal{H}_1) \to V(\mathcal{H}_2)$
such that $\phi(E)\not\in \mathcal{H}_2$ for all $E\in \mathcal{H}_1$.

\begin{theorem}[Lu-Sz\'{e}kely \cite{LS07}]\label{THEOREM:packing-hypergraphs}
Let $\mathcal{H}_1$ and $\mathcal{H}_2$ be two $k$-graphs on $n$ vertices.
If
\begin{align}
\Delta(\mathcal{H}_1) |\mathcal{H}_2| + \Delta(\mathcal{H}_2) |\mathcal{H}_1| < \frac{1}{ek} \binom{n}{k}, \notag
\end{align}
then there is a packing of $\mathcal{H}_1$ and $\mathcal{H}_2$.
\end{theorem}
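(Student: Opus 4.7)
The plan is to prove Theorem~\ref{THEOREM:packing-hypergraphs} via the Lov\'{a}sz Local Lemma applied to a uniformly random bijection. Identify $V(\mathcal{H}_{1})$ and $V(\mathcal{H}_{2})$ with $[n]$ and let $\pi$ be a uniformly random permutation of $[n]$, viewed as a bijection $V(\mathcal{H}_{1}) \to V(\mathcal{H}_{2})$. For each pair $(E,F) \in \mathcal{H}_{1} \times \mathcal{H}_{2}$ let $B_{E,F}$ be the event that $\pi(E) = F$ as a set. Since exactly $k!(n-k)!$ of the $n!$ permutations realize this, $\Pr[B_{E,F}] = 1/\binom{n}{k}$. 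A packing is precisely a $\pi$ avoiding every $B_{E,F}$, so it suffices to show that with positive probability none of these events occurs.

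Each $B_{E,F}$ is determined by the restriction $\pi|_{E}$, so in the appropriate Local Lemma for random permutations (as in Erd\H{o}s--Spencer, or the bijection-adapted version developed by Lu and Sz\'{e}kely) one connects $B_{E,F}$ and $B_{E',F'}$ in the dependency graph whenever $E\cap E' \neq \emptyset$ or $F\cap F' \neq \emptyset$; otherwise the two events involve disjoint coordinate pairs $(v,\pi(v))$ and can be decoupled by a standard conditional-swap argument. For fixed $(E,F)$, the number of $(E',F')$ of the first kind is at most $k\,\Delta(\mathcal{H}_{1})\,|\mathcal{H}_{2}|$ (choose $v\in E$, then $E'\ni v$, then $F'$ arbitrarily), and of the second kind at most $k\,\Delta(\mathcal{H}_{2})\,|\mathcal{H}_{1}|$, giving maximum degree
\begin{align*}
D \le k\bigl(\Delta(\mathcal{H}_{1})|\mathcal{H}_{2}| + \Delta(\mathcal{H}_{2})|\mathcal{H}_{1}|\bigr).
\end{align*}
The symmetric Local Lemma then asks for $e\,\Pr[B_{E,F}]\,(D+1)\le 1$, which rearranges exactly to the hypothesis $\Delta(\mathcal{H}_{1})|\mathcal{H}_{2}| + \Delta(\mathcal{H}_{2})|\mathcal{H}_{1}| < \binom{n}{k}/(ek)$. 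Hence with positive probability no $B_{E,F}$ occurs, and any such $\pi$ is the desired packing.

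The main obstacle is not the counting above but the justification of the Local Lemma in the random-permutation setting: events $B_{E,F}$ and $B_{E',F'}$ indexed by vertex-disjoint data are not literally independent, because the bijectivity of $\pi$ couples all coordinates globally. One must either invoke the Erd\H{o}s--Spencer permutation LLL, which validates exactly the dependency structure above through a conditional-swap coupling, or apply Lu and Sz\'{e}kely's variant tailored to hypergraph packings, which keeps the contributions of $\Delta(\mathcal{H}_{i})$ and $|\mathcal{H}_{i}|$ separated into the two summands. Once that tool is in hand, the dependency count above finishes the argument.
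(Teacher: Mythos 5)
The paper does not supply a proof of Theorem~\ref{THEOREM:packing-hypergraphs}; it is quoted verbatim from Lu and Sz\'{e}kely \cite{LS07}, so there is no ``paper's own proof'' to compare against. Your sketch is, however, an accurate reconstruction of how Lu and Sz\'{e}kely actually argue: a uniformly random bijection, canonical bad events $B_{E,F}=\{\pi(E)=F\}$ each of probability $1/\binom{n}{k}$, the negative-dependency graph that joins $B_{E,F}$ to $B_{E',F'}$ precisely when $E\cap E'\neq\emptyset$ or $F\cap F'\neq\emptyset$, the degree bound $D\le k\bigl(\Delta(\mathcal{H}_1)|\mathcal{H}_2|+\Delta(\mathcal{H}_2)|\mathcal{H}_1|\bigr)$, and then the symmetric lopsided Local Lemma $e\,p\,(D+1)\le 1$, which rearranges to the stated hypothesis. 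Your count of the two kinds of neighbours is correct as an upper bound (overcounting pairs sharing more than one vertex and the self-pair is harmless), and the tiny slack needed to absorb the ``$+1$'' is supplied by the strict inequality in the hypothesis together with the fact that $(E,F)$ itself is counted in your bound but is not a neighbour. You are also right that the substantive content is not the counting but the verification that this graph really is a negative-dependency graph for the uniform random bijection: disjointly-supported canonical events are not independent, only negatively correlated, and establishing that is exactly what the Lu--Sz\'{e}kely lopsided-LLL-for-injections machinery (building on Erd\H{o}s--Spencer) provides. If you wanted to make the argument self-contained you would need to reproduce that coupling/swap lemma; as a sketch that delegates this one step to \cite{LS07}, the proof is correct and is the same route the cited authors take.
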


Theorem~\ref{THEOREM:packing-hypergraphs} enables us to construct sparse regular linear hypergraphs inductively.

\begin{lemma}\label{LEMMA:exist-regular-linear-r-gp}
For every positive integer $n$ that satisfies $k\mid n$ and every positive integer $d$ that satisfies
\begin{align}
d \le \frac{(n-k+2)(n-k+1)}{e k^2(k-1)^2n} + 1,  \notag
\end{align}
there exists a $d$-regular linear $k$-graph with $n$ vertices.
\end{lemma}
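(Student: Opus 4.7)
The plan is to proceed by induction on $d$. The base case $d=1$ is immediate: since $k\mid n$, any perfect matching on $[n]$ is a $1$-regular linear $k$-graph. For the inductive step, assume we already have a $(d-1)$-regular linear $k$-graph $\mathcal{H}$ on $[n]$ (the hypothesis clearly also holds for $d-1$), and the goal becomes to adjoin a perfect matching $\mathcal{M}$ on $[n]$ such that $\mathcal{H}\cup\mathcal{M}$ is still linear; the resulting hypergraph will then be $d$-regular.

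The key observation is that $\mathcal{H}\cup\mathcal{M}$ fails to be linear iff some edge of $\mathcal{M}$ already shares at least two vertices with an edge of $\mathcal{H}$. Encode this constraint via the auxiliary $k$-graph
$$\mathcal{H}^{\ast}=\left\{F\in\binom{[n]}{k}:\, |F\cap E|\ge 2 \text{ for some } E\in\mathcal{H}\right\}.$$
Fix an arbitrary perfect matching $\mathcal{M}_0$ on $[n]$ and apply Theorem~\ref{THEOREM:packing-hypergraphs} to the pair $(\mathcal{H}^{\ast},\mathcal{M}_0)$. If the packing hypothesis is met, we obtain a bijection $\phi\colon[n]\to[n]$ with $\phi(F)\notin\mathcal{M}_0$ for every $F\in\mathcal{H}^{\ast}$; then $\mathcal{M}:=\phi^{-1}(\mathcal{M}_0)$ is again a perfect matching and no edge of $\mathcal{M}$ lies in $\mathcal{H}^{\ast}$, which closes the induction.

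It remains to verify the packing hypothesis
$\Delta(\mathcal{H}^{\ast})|\mathcal{M}_0|+\Delta(\mathcal{M}_0)|\mathcal{H}^{\ast}|<\frac{1}{ek}\binom{n}{k}$
from the assumed bound on $d$. Because $\mathcal{H}$ is linear, its $2$-shadow consists of exactly $\binom{k}{2}(d-1)n/k=(k-1)(d-1)n/2$ pairs, and each such pair extends to $\binom{n-2}{k-2}$ $k$-sets, so
$|\mathcal{H}^{\ast}|\le\tfrac{(k-1)(d-1)n}{2}\binom{n-2}{k-2}.$
For $\Delta(\mathcal{H}^{\ast})$, fix a vertex $v$ and split on whether $v$ lies in the pair certifying $F\in\mathcal{H}^{\ast}$: the two cases contribute at most $(d-1)(k-1)\binom{n-2}{k-2}$ and $\tfrac{(d-1)(k-1)n}{2}\binom{n-3}{k-3}$ respectively. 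Plugging these bounds together with $|\mathcal{M}_0|=n/k$ and $\Delta(\mathcal{M}_0)=1$ into the packing inequality, and simplifying using $\binom{n-2}{k-2}/\binom{n}{k}=k(k-1)/(n(n-1))$, reduces the inequality to the hypothesis on $d$.

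I expect the delicate step to be this last piece of arithmetic: the factor $(n-k+2)(n-k+1)/n$ appearing in the hypothesis is slightly sharper than what one gets from crude asymptotic estimates, so the counts for $|\mathcal{H}^{\ast}|$ and $\Delta(\mathcal{H}^{\ast})$ must be carried through exactly (rather than absorbing $O(1)$ factors) in order to hit precisely the claimed threshold. Everything else is a routine unwinding of definitions and an application of Theorem~\ref{THEOREM:packing-hypergraphs}.
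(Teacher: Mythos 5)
Your proposal is correct and uses the same inductive framework and the same tool (the Lu--Sz\'ekely packing theorem, Theorem~\ref{THEOREM:packing-hypergraphs}) as the paper, but with the roles of the two hypergraphs swapped: the paper packs the $(d-1)$-regular linear graph $\mathcal{H}_{d-1}$ \emph{as is} against a blown-up matching $\widehat{\mathcal{H}}_1$ (all $k$-sets meeting some matching edge in at least two vertices), while you pack the blow-up $\mathcal{H}^{\ast}$ of $\mathcal{H}_{d-1}$ against a plain perfect matching $\mathcal{M}_0$. Both choices produce a matching that shares at most one vertex with each existing edge, which is exactly what closes the induction. The paper's choice is slightly cleaner because $\widehat{\mathcal{H}}_1$ is vertex-transitive and hence regular, so $\Delta(\widehat{\mathcal{H}}_1)$ is read off directly from $|\widehat{\mathcal{H}}_1|$; your $\mathcal{H}^{\ast}$ need not be regular, which is why you need the two-case estimate for $\Delta(\mathcal{H}^{\ast})$. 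I checked the deferred arithmetic: with $|\mathcal{H}^{\ast}|\le\tfrac{(k-1)(d-1)n}{2}\binom{n-2}{k-2}$, $\Delta(\mathcal{H}^{\ast})\le(d-1)(k-1)\binom{n-2}{k-2}+\tfrac{(k-1)(d-1)n}{2}\binom{n-3}{k-3}$, $|\mathcal{M}_0|=n/k$, and $\Delta(\mathcal{M}_0)=1$, the Lu--Sz\'ekely condition reduces to $(d-1)(k-1)\,n\binom{n-2}{k-2}\cdot\tfrac{k(n-1)-2}{k(n-2)}<\tfrac{1}{ek}\binom{n}{k}$, which is implied by the stated hypothesis on $d$; indeed for $k\ge 3$ your bound is strictly better than the paper's, since the paper uses the looser $\binom{n}{k-2}$ where you carry the exact $\binom{n-2}{k-2}$. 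So the argument closes, just as you anticipated.
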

\begin{proof}[Proof of Lemma \ref{LEMMA:exist-regular-linear-r-gp}]
We proceed by induction on $d$ and note that the case $d = 1$ is trivial since
a perfect matching on $n$ vertices is a $1$-regular linear $k$-graph.
Now suppose that $d\ge 2$.
By the induction hypothesis, there exists a $(d-1)$-regular linear $k$-graph on $n$ vertices,
and let $\mathcal{H}_{d-1}$ be such a $k$-graph.
Let $\mathcal{H}_{1}$ be a perfect matching on $n$ vertices.
Define the extended $k$-graph $\widehat{\mathcal{H}}_{1}$ of $\mathcal{H}_1$ as
\begin{align}
\widehat{\mathcal{H}}_{1}
= \left\{\{u,v\}\cup A:
\{u,v\}\in \partial_{k-2}\mathcal{H}_1 \text{ and } A\in \binom{V(\mathcal{H}_1)\setminus\{u,v\}}{k-2} \right\}.\notag
\end{align}
It is clear from the definition that $\mathcal{H}_1 \subset \widehat{\mathcal{H}}_{1}$,
$|\widehat{\mathcal{H}}_{1}| < \frac{n}{k}\binom{k}{2}\binom{n}{k-2}$, and
$\widehat{\mathcal{H}}_{1}$ is regular.
So,
\begin{align}
\Delta(\widehat{\mathcal{H}}_{1})
= \frac{k|\widehat{\mathcal{H}}_{1}|}{n}
< \frac{k}{n}\frac{n}{k}\binom{k}{2}\binom{n}{k-2} =\binom{k}{2}\binom{n}{k-2}. \notag
\end{align}
By assumption
\begin{align}
\Delta(\mathcal{H}_{d-1}) |\widehat{\mathcal{H}}_1| + \Delta(\widehat{\mathcal{H}}_1) |\mathcal{H}_{d-1}|
& < (d-1)\frac{n}{k}\binom{k}{2}\binom{n}{k-2} + \frac{(d-1)n}{k}\binom{k}{2}\binom{n}{k-2} \notag\\
& = 2(d-1)\frac{n}{k}\binom{k}{2}\binom{n}{k-2}
  \le \frac{1}{ek}\binom{n}{k}. \notag
\end{align}
Therefore, by Theorem \ref{THEOREM:packing-hypergraphs}, there exist a bijection
$\phi: V(\mathcal{H}_{d-1}) \to V(\mathcal{H}_{1})$ such that
$|\phi(E) \cap E'| \le k-1$ for all $E\in \mathcal{H}_{d-1}$ and all $E'\in \widehat{\mathcal{H}}_{1}$,
and this implies that $|\phi(E) \cap E''| \le 1$ for all $E\in \mathcal{H}_{d-1}$ and all $E''\in \mathcal{H}_{1}$.
Therefore, $\mathcal{H}_1 \cup \phi\left(\mathcal{H}_{d-1}\right)$ is a $d$-regular linear $k$-graph on $n$ vertices.
\end{proof}

\subsection{Proofs}
First we use Theorem~\ref{THM:BB16-A} and Proposition~\ref{PROP:BB16} to prove a result about
the common independent set of two hypergraphs on the same vertex set.

\begin{theorem}\label{THM:common-indep-set}
Let $k_1,k_2 \ge 2$ be integers, $\epsilon>0$, $n, D\in \mathbb{N}$, and $d>0$.
Suppose that
\begin{itemize}
\item[(a)] $\mathcal{H}$ is an $n$-vertex $k_1$-graph, $\mathcal{G}$ is an $n$-vertex $k_2$-graph, and
$V(\mathcal{H}) = V(\mathcal{G}) = V$,
\item[(b)] $D>n^{\epsilon}$ and
$d\left({\log n}/{D}\right)^{\frac{k_2-1}{k_1-1}} \gg 1$,
\item[(c)] $\mathcal{H}$ satisfies that $\Delta(\mathcal{H})\le D$,
\begin{align}
\Delta_{i}(\mathcal{H}) < D^{\frac{k_1-i}{k_1-1}-\epsilon}\quad{\rm for}\quad 2 \le i \le k_1-1,
\quad{\rm and}\quad
\Gamma(\mathcal{H}) < D^{1-\epsilon}, \notag
\end{align}
\item[(d)] $\mathcal{G}$ satisfies that $d(\mathcal{G}) \le d$ and
\begin{align}
C_{\mathcal{G}}(2,i) \ll n\left({D}/{\log n}\right)^{\frac{2k_2-i-1}{k_1-1}}
\quad{\rm for}\quad 2\le i \le k_2-1. \notag
\end{align}
\end{itemize}
Then, $\alpha\left(\mathcal{H}\cup \mathcal{G}\right) = \Omega\left(\omega \cdot {n}/{d^{1/(k_2-1)}}\right)$,
where
\begin{align}
\omega = \omega(n,D,d,k_1,k_2) = \left(\log\left(\left({\log n}/{D}\right)^{\frac{k_2-1}{k_1-1}}d\right)\right)^{1/(k_2-1)}. \notag
\end{align}
\end{theorem}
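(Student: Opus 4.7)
The plan is to run the random greedy independent set algorithm on a regularization of $\mathcal{H}$, track the evolution of $\mathcal{G}$-edges and $(2,j)$-cycles of $\mathcal{G}$ inside the partial independent set via Proposition~\ref{PROP:BB16}, delete one vertex from each surviving $(2,j)$-cycle to make the induced $\mathcal{G}$-trace linear, and finish by applying the Frieze--Mubayi bound (Theorem~\ref{THM:FM13}).

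First, since $\mathcal{H}$ is only assumed to have $\Delta(\mathcal{H})\le D$, I regularize it using the Cartesian product trick from the Preliminaries. For each $u\in V$ I fix a $(D-d_{\mathcal{H}}(u))$-regular linear $k_1$-graph $\mathcal{G}_u$ on an auxiliary vertex set $[m]$ with $m$ polynomial in $n$, which exists by Lemma~\ref{LEMMA:exist-regular-linear-r-gp}. The Cartesian product $\widehat{\mathcal{H}}=\mathcal{H}\Box\{\mathcal{G}_u\}_{u\in V}$ is a $D$-regular $k_1$-graph on $V\times[m]$, and linearity of the padding makes the shadow bounds $\Delta_i(\widehat{\mathcal{H}})<D^{(k_1-i)/(k_1-1)-\epsilon'}$ and the codegree bound $\Gamma(\widehat{\mathcal{H}})<D^{1-\epsilon'}$ inherit from hypothesis (c) (with a slightly weakened $\epsilon'$). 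I simultaneously lift $\mathcal{G}$ to a $k_2$-graph $\widehat{\mathcal{G}}$ on the same vertex set by placing an isomorphic copy on each slice $V\times\{j\}$; this preserves $d(\widehat{\mathcal{G}})=d$ and gives $C_{\widehat{\mathcal{G}}}(2,i)=m\cdot C_{\mathcal{G}}(2,i)$.

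Next I run the random greedy independent set algorithm on $\widehat{\mathcal{H}}$ and stop at step $i_0:=c\,nm\bigl(\log(nm)/D\bigr)^{1/(k_1-1)}$ for a small constant $c>0$. Theorem~\ref{THM:BB16-A} guarantees $i_0\le i_{\max}$ with high probability, so $|I(i_0)|=i_0$ and $I(i_0)$ is $\widehat{\mathcal{H}}$-independent. Proposition~\ref{PROP:BB16} applied to $\widehat{\mathcal{G}}$ bounds the expected number of $\widehat{\mathcal{G}}$-edges inside $I(i_0)$ by $(1+o(1))(i_0/(nm))^{k_2}\,|\widehat{\mathcal{G}}|$, which translates into induced average degree $d_0=\Theta\bigl((\log n/D)^{(k_2-1)/(k_1-1)}d\bigr)$; hypothesis (b) gives $d_0\gg 1$. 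Viewing the family of $(2,j)$-cycles of $\widehat{\mathcal{G}}$ as a $(2k_2-j)$-uniform auxiliary hypergraph and reapplying Proposition~\ref{PROP:BB16}, the expected number of such cycles lying fully in $I(i_0)$ is at most $(1+o(1))(i_0/(nm))^{2k_2-j}\cdot m\cdot C_{\mathcal{G}}(2,j)$, which is $o(i_0)$ by hypothesis (d) after absorbing the $\log(nm)=\Theta(\log n)$ factor.

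Markov's inequality combined with a union bound then produces a realization in which $|I(i_0)|=i_0$, the number of $\widehat{\mathcal{G}}$-edges in $I(i_0)$ is $O(d_0i_0)$, and the total number of $(2,j)$-cycles in $I(i_0)$ is $o(i_0)$. Removing one vertex from each such cycle yields $I'\subseteq I(i_0)$ with $|I'|=(1-o(1))i_0$ such that $\widehat{\mathcal{G}}[I']$ is a linear $k_2$-graph of average degree $O(d_0)$, while $I'$ is still $\widehat{\mathcal{H}}$-independent. Theorem~\ref{THM:FM13} applied to $\widehat{\mathcal{G}}[I']$ delivers a subset
\[
J\subseteq I' \text{ of size } \Omega\bigl(\omega\, i_0/d_0^{1/(k_2-1)}\bigr)=\Omega\bigl(\omega\cdot nm/d^{1/(k_2-1)}\bigr)
\]
that is independent in both $\widehat{\mathcal{H}}$ and $\widehat{\mathcal{G}}$. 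Pigeonholing $J$ across the $m$ horizontal slices selects one carrying $\Omega(\omega\, n/d^{1/(k_2-1)})$ vertices of $J$, which then project down to $V$ to give the desired common independent set in $\mathcal{H}\cup\mathcal{G}$. The main technical hurdle will be verifying that all the shadow and codegree bounds truly transfer to $\widehat{\mathcal{H}}$ — for $k_1\ge 3$ this is clean since linear padding has $(k_1-1)$-codegree zero, but for $k_1=2$ the linear padding may need to be replaced by a higher-girth or low-codegree regular graph to keep $\Gamma(\widehat{\mathcal{H}})$ below $D^{1-\epsilon'}$.
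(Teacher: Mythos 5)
Your proof is essentially the paper's: both regularize $\mathcal{H}$ via the Cartesian product with linear padding $k_1$-graphs, run the random greedy independent set algorithm on the regularized hypergraph, apply Proposition~\ref{PROP:BB16} to control the number of $\mathcal{G}$-edges and of $(2,j)$-cycles of $\mathcal{G}$ landing inside the partial independent set, delete a negligible vertex set to make the induced $\mathcal{G}$-trace linear, apply the Frieze--Mubayi bound (Theorem~\ref{THM:FM13}), and pigeonhole across the $m$ slices. Your closing caveat about $k_1=2$ is well taken: the paper asserts $\Gamma(\widehat{\mathcal{H}})=\Gamma(\mathcal{H})$ without comment, which holds automatically for $k_1\ge 3$ because linear $k_1$-graphs then have $(k_1-1)$-codegree zero, but would need a low-codegree padding construction for $k_1=2$---harmless here since the theorem is invoked only with $k_1=k-1\ge 3$.
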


{\bf Remarks.}
\begin{itemize}
\item Although Theorem~\ref{THM:common-indep-set} imposes no condition on $k_1$ and $k_2$, we will only apply the result in the case $k_2=k_1+1$.

\item Spencer's bound $(\ref{equ:Spencer-bound})$ implies that
$\alpha(\mathcal{G}) = \Omega\left({n}/{d^{1/(k_2-1)}}\right)$.
Theorem~\ref{THM:common-indep-set} improves it in two ways:
first it improves the bound by a factor of $\omega$,
second it is a lower bound for the independence number of $\mathcal{G} \cup \mathcal{H}$.
Ajtai, Koml\'{o}s, Pintz, Spencer, and Szemer\'{e}di's result $(\ref{equ:AKPSS-bound})$ implies that
the upper bound for $\omega$ is $(\log n)^{1/(k_2-1)}$.
However, we are not able to show that $\omega = \Omega\left((\log n)^{1/(k_2-1)}\right)$ in general,
and it would be interesting to determine the optimal value of $\omega$.
\item If $\mathcal{H}$ and $\mathcal{G}$ satisfy conditions $(a)$ and $(c)$ in Theorem~\ref{THM:common-indep-set}
and also satisfy
\subitem $(b')$ $D>n^{\epsilon}$ and $d\left({\log n}/{D}\right)^{\frac{k_2-1}{k_1-1}} \ll 1$,

then $\alpha\left(\mathcal{H}\cup \mathcal{G}\right) = \Omega\left((\log n)^{1/(k_1-1)}\cdot {n}/{D^{1/(k_1-1)}}\right)$.
Moreover, if $\mathcal{G} = \emptyset$, then $\alpha(\mathcal{H}) = \Omega\left((\log n)^{1/(k_1-1)}\cdot {n}/{D^{1/(k_1-1)}}\right)$ which is the bound in Theorem~\ref{THM:BB16-A}.
The proof is similar to the proof of Theorem~\ref{THM:common-indep-set}.
\end{itemize}

\begin{proof}[Proof of Theorem~\ref{THM:common-indep-set}]
For $2\le i \le k_2-1$ define
\begin{align}
\mathcal{G}^{i} = \left\{S\in \binom{V}{2k_2-i}: \mathcal{G}[S] \text{ contains a $(2,i)$-cycle}\right\}. \notag
\end{align}
Fix $m\in \mathbb{N}$ such that $D \ll m = O(n^{k_1})$, and $k_1\mid m$.
Notice that $D$ has a trivial upper bound $n^{k_1-1}$, so such an integer $m$ exists.
For every $v\in V$ let $D_v = D-d_{\mathcal{H}}(v)$.
Since $m \gg D$ and $k_1\mid m$, by Lemma~\ref{LEMMA:exist-regular-linear-r-gp},
there exists a $D_v$-regular linear $k_1$-graph $\mathcal{F}(v)$ on $[m]$ for every $v\in V$.
Let
\begin{align}
\mathcal{H}' = \mathcal{H} \cup \mathcal{G}\cup \left(\bigcup_{2\le i \le k_2-1}\mathcal{G}^{i}\right), \quad
\mathcal{F} = \{\mathcal{F}(v): v\in V\}, \quad{\rm and}\quad
\widehat{\mathcal{H}}' = \mathcal{H}' \Box \mathcal{F}. \notag
\end{align}
Note that $\widehat{\mathcal{H}}'$ is consisting of
\begin{itemize}
\item[1.] the $k_1$-graph $\widehat{\mathcal{H}} = \mathcal{H} \Box \mathcal{F}$,
\item[2.] the $k_2$-graph $\widehat{\mathcal{G}}$ that is the union of $m$ pairwise vertex-disjoint copies of $\mathcal{G}$, and
\item[3.] the $(2k_2-i)$-graph $\widehat{\mathcal{G}}^{i}$ that is the union of $m$ pairwise vertex-disjoint
copies of $\mathcal{G}^{i}$ for $2\le i \le k_2-1$.
\end{itemize}

For every $v\in V(\widehat{\mathcal{H}})$ we have $d_{\widehat{\mathcal{H}}}(v) = d_{\mathcal{H}}(v)+D_v = D$.
Moreover,
\begin{align}
\Delta_i(\widehat{\mathcal{H}}) = \Delta_{i}(\mathcal{H}) < D^{\frac{k_1-i}{k_1-1}-\epsilon}\quad{\rm for}\quad 2 \le i \le k_1-1,
\quad{\rm and}\quad
\Gamma(\widehat{\mathcal{H}}) = \Gamma(\mathcal{H}) < D^{1-\epsilon}, \notag
\end{align}
Applying the random greedy independent set algorithm and Theorem~\ref{THM:BB16-A} to $\widehat{\mathcal{H}}$ ,
we obtain an independent set
$\hat{I}$ of size at least $c \left(\log nm\right)^{1/(k_1-1)}\cdot nm/D^{1/(k_1-1)}$
for some constant $c>0$ with probability $1-o(1)$.
Let $p = c\left((\log nm)/D\right)^{1/(k_1-1)}$ and
we may assume that $|\hat{I}| = pnm$ since otherwise we can take
the set of the first $pnm$ vertices generated by the random greedy independent set algorithm instead.

Applying Proposition~\ref{PROP:BB16} to $\widehat{\mathcal{G}},\widehat{\mathcal{G}}^1,\ldots, \widehat{\mathcal{G}}^{k_2-1}$
and by assumption $(d)$ we obtain
\begin{align}
\mathbb{E}\left[\left|\widehat{\mathcal{G}}[\hat{I}]\right|\right]
\le (1+o(1))p^{k_2}|\widehat{\mathcal{G}}|
< 2dnm p^{k_2}, \notag
\end{align}
and for $2\le i \le k_2-1$
\begin{align}
\mathbb{E}\left[\left|\widehat{\mathcal{G}}^{i}[\hat{I}]\right|\right]
= (1+o(1))p^{2k_2-i} \times m \times C_{\mathcal{G}}(2,i)
= o(pnm). \notag
\end{align}
So, by Markov's inequality and the union bound, with probability at least $1/2$ both
\begin{align}
\left|\widehat{\mathcal{G}}[\hat{I}]\right| \le 10 dnm p^{k_2}
\quad{\rm and}\quad
\left|\widehat{\mathcal{G}}^{i}[\hat{I}]\right| = o(pnm) \quad \forall \mbox{ } 2\le i \le k_2-1 \notag
\end{align}
hold.

Fix a set $\hat{I}$ such that $|\hat{I}| = pnm$ and the events above hold.
Then by removing $o(pnm)$ vertices we obtain a subset $\hat{I}'\subset \hat{I}$ such that
\begin{align}
\left|\widehat{\mathcal{G}}^{i}[\hat{I}]\right| = 0 \quad{\rm for}\quad 2\le i \le k_2-1. \notag
\end{align}
In other words, the $k_2$-graph $\widehat{\mathcal{G}}[\hat{I}']$ is linear.
Since
\begin{align}
d\left(\widehat{\mathcal{G}}[\hat{I}']\right)
\le \frac{k_2 \times 10 dnm p^{k_2}}{(1-o(1))pnm} \le 20k_2 d p^{k_2-1}, \notag
\end{align}
by Theorem~\ref{THM:FM13}, it has an independent set $I'$
of size at least
\begin{align}
\Omega\left(\frac{pnm}{(20 k_2 d p^{k_2-1})^{1/(k_2-1)}}\left(\log 20dp^{k_2-1}\right)^{\frac{1}{k_2-1}}\right)
& = \Omega\left(m\frac{n}{d^{1/(k_2-1)}}\left(\log p^{k_2-1}d\right)^{\frac{1}{k_2-1}}\right)\notag\\
& = \Omega\left(m\frac{n}{d^{1/(k_2-1)}}\omega\right). \notag
\end{align}
Here we used assumption $(b)$ to ensure that $20 k_2 d p^{k_2-1} \ge 1$.

By the Pigeonhole principle, there exists $j\in[m]$ such that $I = I' \cap (V\times \{j\})$ has size
at least $|\hat{I}|/m = \Omega\left(\omega \cdot {n}/{d^{1/(k_2-1)}}\right)$,
and it is clear that $I$ is an independent set in both $\mathcal{H}$ and $\mathcal{G}$.
\end{proof}

Next we use Theorem~\ref{THM:common-indep-set} to prove Theorem \ref{THM:indep-ell-k-2}.
The idea is to first decompose an $(n,k,k-2)$-omitting system $\mathcal{H}$
into two parts: $\mathcal{H}_{k-1}\subset \partial\mathcal{H}$ and $\mathcal{H}_{k} \subset \mathcal{H}$,
and then apply Theorem~\ref{THM:common-indep-set} to $\mathcal{H}_{k-1}$ and $\mathcal{H}_{k}$ to find a large
set $I \subset V$ that is independent in both of them.
It will be easy to see that the set $I$ is independent in $\mathcal{H}$.

\begin{proof}[Proof of Theorem \ref{THM:indep-ell-k-2}]
Let $\mathcal{H}$ be an $(n,k,k-2)$-omitting system and let $V = V(\mathcal{H})$.
By Theorem \ref{THM:FF85}, there exists a constant $C_1$ such that $|\mathcal{H}| \le C_1 n^{k-2}$.
Let $\beta = \beta(k)>0$ be a constant such that $\frac{k}{2(k-1)}<\beta<1$, for example, take $\beta = 4/5$.
Define
\begin{align}
\mathcal{H}_{k-1} = \left\{A\in \partial \mathcal{H}: d_{\mathcal{H}}(A) \ge \frac{n^{\frac{k-3}{k-1}}}{(\log n)^{\beta}} \right\}
\quad{\rm and}\quad
\mathcal{H}_{k} = \left\{E\in \mathcal{H}: \binom{E}{k-1} \cap \mathcal{H}_{k-1} = \emptyset \right\}. \notag
\end{align}

Let $k_1 = k-1$, $k_2 = k$, $D = n^{k-4+{2}/{(k-1)}}(\log n)^{\beta}$, $d = C_1 n^{k-3}$, and
$\epsilon$ be a constant such that $0<\epsilon<1/(k-1)$.
Then $D>n^{\epsilon}$ and
\begin{align}
d\left(\frac{\log n}{D}\right)^{\frac{k_2-1}{k_1-1}}
= C_1 n^{k-3} \left(\frac{\log n}{n^{k-4+\frac{2}{k-1}}(\log n)^{\beta}}\right)^{\frac{k-1}{k-2}}
= C_1 \left(\log n\right)^{(1-\beta)\frac{k-1}{k-2}}
\gg 1. \notag
\end{align}
Therefore, condition $(b)$ in Theorem~\ref{THM:common-indep-set} is satisfied.
Next we show that $\mathcal{H}_{k-1}$ and $\mathcal{H}_{k}$ satisfy $(c)$ and $(d)$ in Theorem~\ref{THM:common-indep-set}
with our choice of $k_1,k_2,D,d,\epsilon$.

\begin{claim}\label{CLAIM:H-k-1}
The $(k-1)$-graph $\mathcal{H}_{k-1}$ is an $\left(n,k-1,k-3,\lfloor n^{2/(k-1)}(\log n)^{\beta}\rfloor\right)$-system
and an $(n,k-1,k-2)$-system.
\end{claim}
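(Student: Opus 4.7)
The plan is to establish both assertions using the definition $A \in \mathcal{H}_{k-1} \iff d_{\mathcal{H}}(A) \geq D$ where $D := n^{(k-3)/(k-1)}/(\log n)^{\beta}$, together with the $(n,k,k-2)$-omitting property that no two edges of $\mathcal{H}$ meet in exactly $k-2$ vertices. Since $k \geq 4$ we have $D \to \infty$, so for $n$ large enough we may assume $D \geq 3$.

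For the $(n,k-1,k-2)$-system assertion I would argue by contradiction: suppose $A,A' \in \mathcal{H}_{k-1}$ with $|A \cap A'| = k-2$ and write $A = S \cup \{a\}$, $A' = S \cup \{a'\}$ with $|S| = k-2$. For any $E \supset A$ and $E' \supset A'$ in $\mathcal{H}$, $E \cap E' \supset S$ forces $|E \cap E'| \in \{k-2,k-1,k\}$, and the omitting property rules out $k-2$. If $|E \cap E'| = k$ then $E = E' = A \cup A'$. Writing $E = A \cup \{v_E\}$ and $E' = A' \cup \{v_{E'}\}$, a brief inclusion--exclusion in the case $|E \cap E'| = k-1$ shows $v_E = v_{E'}$ whenever $E, E' \neq A \cup A'$. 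Fixing any $E' \supset A'$ distinct from $A \cup A'$ (available because $d_{\mathcal{H}}(A') \geq D \geq 3$) pins $v_E$ down to a single value, so there is at most one such $E$, giving $d_{\mathcal{H}}(A) \leq 2 < D$, a contradiction.

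For the $(n,k-1,k-3,\lfloor n^{2/(k-1)}(\log n)^{\beta}\rfloor)$-system assertion, fix a $(k-3)$-set $T \subset V$ and pass to the link $L := L_{\mathcal{H}}(T)$, a $3$-graph on $V \setminus T$. The identity $|E \cap E'| = (k-3) + |(E \setminus T) \cap (E' \setminus T)|$ combined with the omitting property gives that no two edges of $L$ share exactly one vertex. The number of $(k-1)$-edges of $\mathcal{H}_{k-1}$ containing $T$ equals the number of pairs $\{u,v\} \subset V \setminus T$ with pair-degree at least $D$ in $L$, and since $n/D = n^{2/(k-1)}(\log n)^{\beta}$, it suffices to bound this count by $n/D$.

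The heart of the argument is to show that distinct high-degree pairs in $L$ are vertex-disjoint, together with all of their petals. The key sublemma is: if $\{u,v\}$ has petals $x_1,\dots,x_s$ with $s \geq 3$, then any edge $f \in L$ is either of the form $\{u,v,\ast\}$ or disjoint from $\{u,v,x_1,\dots,x_s\}$; this is proved by evaluating $|f \cap \{u,v,x_i\}| \in \{0,2\}$ for three distinct petals and ruling out the case that $f$ contains exactly one of $u,v$. Applying this to the $\geq D$ edges through a second high-degree pair $\{u',v'\}$, and using that the inclusion $\{u,v\} \subset \{u',v',y\}$ can hold for at most one $y$ and only when $\{u,v\} \cap \{u',v'\} \neq \emptyset$, a three-step case analysis shows successively that (i)~$\{u,v\} \cap \{u',v'\} = \emptyset$, (ii)~$\{u',v'\}$ avoids the petals $\{x_1,\dots,x_s\}$, and (iii)~the petals of $\{u',v'\}$ avoid $\{u,v,x_1,\dots,x_s\}$. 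The resulting vertex-disjoint sunflowers each occupy at least $D+2$ vertices of $V \setminus T$, yielding at most $n/(D+2) \leq n/D$ of them. The main obstacle is this disjointness step, which requires the assumption $D \geq 3$ to repeatedly rule out partial overlaps between the two sunflowers.
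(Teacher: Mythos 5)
Your proof is correct, but it takes a genuinely different route from the paper's, particularly for the codegree bound.

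For the $(n,k-1,k-2)$-system assertion, the paper's argument is shorter and more direct: given $e_1,e_2\in\mathcal{H}_{k-1}$ with $|e_1\cap e_2|=k-2$, one simply picks $v_1\in N_{\mathcal{H}}(e_1)$ and $v_2\in N_{\mathcal{H}}(e_2)$ with $v_1\neq v_2$ and both outside $e_1\cup e_2$ (possible since $|N_{\mathcal{H}}(e_i)|\geq D>2k$), and then $E_i=e_i\cup\{v_i\}\in\mathcal{H}$ give $|E_1\cap E_2|=k-2$ immediately. Your route, pinning $v_E$ via a fixed auxiliary extension $E'_0\supset A'$ and concluding $d_{\mathcal{H}}(A)\leq 2<D$, is equivalent in substance but longer. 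For the $(n,k-1,k-3,\lfloor n^{2/(k-1)}(\log n)^\beta\rfloor)$-system assertion, the paper first invokes the just-proved $(n,k-1,k-2)$-system property to conclude $L_{\mathcal{H}_{k-1}}(T)$ is a matching $\{e_1,\ldots,e_m\}$, then shows the petal sets $N_i=\{w:T\cup e_i\cup\{w\}\in\mathcal{H}\}$ are pairwise disjoint via the omitting property, giving $mD\leq\sum_i|N_i|\leq n$. Your argument instead works entirely inside the $3$-uniform link $L_{\mathcal{H}}(T)$ and establishes the sublemma that any edge of $L$ is either of the form $\{u,v,*\}$ or avoids $\{u,v\}$ together with all its petals (using $s\geq D\geq 3$), deducing that the full sunflowers of distinct high-degree pairs are disjoint. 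This re-derives the matching property from scratch rather than importing the first assertion, so it is self-contained but requires a longer case analysis; the final count $\leq n/(D+2)<n/D$ is the same. Both approaches are sound; the paper's is leaner because it reuses part one.
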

\begin{proof}[Proof of Claim \ref{CLAIM:H-k-1}]
First we prove that $\mathcal{H}_{k-1}$ is an $(n,k-1,k-2)$-system.
Indeed, suppose to the contrary that there exist $e_1,e_2\in \mathcal{H}_{k-1}$ such that
$S = e_1\cap e_2$ has size $k-2$.
By the definition of $\mathcal{H}_{k-1}$,
$|N_{\mathcal{H}}(e_i)| \ge {n^{\frac{k-3}{k-1}}}/(\log n)^{\beta} > 2k$ for $i=1,2$.
So there exist $v_1,v_2\in V\setminus(e_1\cup e_2)$ such that
$E_i = e_i\cup \{v_i\} \in \mathcal{H}$ for $i=1,2$.
However, $E_1\cap E_2 =S$ has size $k-2$, contradicting the assumption that $\mathcal{H}$ is an $(n,k,k-2)$-omitting system.
Therefore, $\mathcal{H}_{k-1}$ is an $(n,k-1,k-2)$-system.

Now suppose to the contrary that there exists a set $A \subset V$ of size $k-3$
with $d_{\mathcal{H}_{k-1}}(A) = m >  n^{2/(k-1)}(\log n)^{\beta}$.
Since $\mathcal{H}_{k-1}$ is an $(n,k-1,k-2)$-system,
$L_{\mathcal{H}_{k-1}}(A)$ is a matching consisting of $m$ edges.
Suppose that $L_{\mathcal{H}_{k-1}}(A) = \{e_1,\ldots,e_m\}$, and
let $B_i = A \cup e_i$ for $1\le i \le m$.
Since $B_i \in \mathcal{H}_{k-1}$,
by definition, there exists
a set $N_i \subset V$ of size at least ${n^{\frac{k-3}{k-1}}}/(\log n)^{\beta}$
such that $B_i\cup \{u\} \in \mathcal{H}$ for all $u \in N_i$.

Suppose that there exists $v\in N_i\cap N_j$ for some distinct $i,j\in[m]$.
Then the two sets $A\cup e_i \cup \{v\}$ and $A\cup e_j \cup \{v\}$ are edges in $\mathcal{H}$
and have an intersection of size $k-2$, a contradiction.
Therefore, $N_i\cap N_j = \emptyset$ for all distinct $i,j\in[m]$.
It follows that
\begin{align}
n = |V| \ge \sum_{i\in[m]}|N_i|
\ge m {n^{\frac{k-3}{k-1}}}/(\log n)^{\beta}
> n^{\frac{2}{k}}(\log n)^{\beta} {n^{\frac{k-3}{k-1}}}/(\log n)^{\beta}
>n, \notag
\end{align}
a contradiction.
Therefore, $\Delta_{k-3}(\mathcal{H}_{k-1}) \le  n^{2/(k-1)}(\log n)^{\beta}$, which implies that
$\mathcal{H}_{k-1}$ is an $\left(n,k-1,k-3, \lfloor n^{2/(k-1)}(\log n)^{\beta}\rfloor\right)$-system.
\end{proof}

Since $\mathcal{H}_{k-1}$ is an
$\left(n,k-1,k-3, \lfloor n^{2/(k-1)}(\log n)^{\beta}\rfloor \right)$-system,
$\Delta_{k-3}(\mathcal{H}_{k-1}) \le  n^{2/(k-1)}(\log n)^{\beta}$.
Moreover, for every set $S \subset V$ of size $i$ with $i\in[k-4]$
the link $L_{\mathcal{H}_{k-1}}(S)$ is an $\left(n,k-1-i,k-3-i, n^{2/(k-1)}(\log n)^{\beta}\right)$-system.
Therefore, for $i \in [k-4]$
\begin{align}
\Delta_{i}(\mathcal{H}_{k-1})
\le n^{\frac{2}{k-1}}(\log n)^{\beta} \binom{n}{k-3-i}/\binom{k-1-i}{k-3-i} < n^{k-3-i+\frac{2}{k-1}}(\log n)^{\beta}. \notag
\end{align}
Since
\begin{align}
\left(k-4+\frac{2}{k-1}\right)\frac{k-1-i}{k-2} - \left(k-3-i+\frac{2}{k-1}\right)
= \frac{2(i-1)}{k-1}>\epsilon, \notag
\end{align}
we obtain
\begin{align}
\Delta_i(\mathcal{H}_{k-1})
< n^{k-3-i+\frac{2}{k-1}} (\log n)^{\beta}
< D^{\frac{k-1-i}{k-1-1}-\epsilon} \quad{\rm for}\quad 2\le i \le k-3. \notag
\end{align}
On the other hand, since $\mathcal{H}$ is an $(n,k-1,k-2)$-system,
$\Delta_{k-2}(\mathcal{H}_{k-1}) \le 1<  D^{\frac{k-1-(k-2)}{k-1-1}-\epsilon}$ and
$\Gamma(\mathcal{H}_{k-1}) = 0 < D^{1-\epsilon}$.
Therefore, $\mathcal{H}_{k-1}$ satisfies condition $(c)$ in Theorem~\ref{THM:common-indep-set}.

\begin{claim}\label{CLAIM:H-k}
The $k$-graph $\mathcal{H}_k$ satisfies $d(\mathcal{H}_k) \le C_1 k n^{k-2}$,
\begin{align}
C_{\mathcal{H}_k}(2,i) = O\left(n^{2k-4-i}\right) \quad{\rm for}\quad 2\le i \le k-3, \notag
\end{align}
$C_{\mathcal{H}_k}(2,k-2) = 0$, and $C_{\mathcal{H}_k}(2,k-1)= O\left(n^{k-2+\frac{k-3}{k-1}}/(\log n)^{\beta}\right)$.
\end{claim}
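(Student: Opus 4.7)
The plan is to establish the four bounds separately, relying almost entirely on the Frankl--F\"{u}redi bound (Theorem~\ref{THM:FF85}) together with the defining property of $\mathcal{H}_k$. The first two assertions are essentially immediate: since $\mathcal{H}_k \subseteq \mathcal{H}$ and $|\mathcal{H}| \le C_1 n^{k-2}$, the average degree bound is just a counting of incidences, and $C_{\mathcal{H}_k}(2,k-2) = 0$ is forced because $\mathcal{H}$ is an $(n,k,k-2)$-omitting system, so no two of its edges (hence no two edges of $\mathcal{H}_k$) meet in exactly $k-2$ vertices.

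For the bounds $C_{\mathcal{H}_k}(2,i) = O(n^{2k-4-i})$ in the range $2 \le i \le k-3$, the main step is to bound $d_{\mathcal{H}}(S)$ for each $S \in \binom{V}{i}$ by analyzing its link. For any such $S$, the link $L_{\mathcal{H}}(S)$ is a $(k-i)$-uniform hypergraph on $V \setminus S$; moreover, two edges $e_1, e_2 \in L_{\mathcal{H}}(S)$ cannot satisfy $|e_1 \cap e_2| = k-2-i$, since otherwise $e_1 \cup S$ and $e_2 \cup S$ would be edges of $\mathcal{H}$ meeting in exactly $k-2$ vertices. Thus $L_{\mathcal{H}}(S)$ is a $(k-i,\,k-2-i)$-omitting system, and Theorem~\ref{THM:FF85} yields $d_{\mathcal{H}}(S) = O(n^{\max\{k-2-i,\,1\}}) = O(n^{k-2-i})$ in the stated range. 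I would then count each pair $\{E,E'\}$ with $|E \cap E'| = i$ by its unique $i$-subset intersection:
\[
C_{\mathcal{H}_k}(2,i) \le \sum_{S \in \binom{V}{i}} \binom{d_{\mathcal{H}_k}(S)}{2} \le \frac{1}{2}\cdot \max_{S} d_{\mathcal{H}_k}(S) \cdot \sum_{S} d_{\mathcal{H}_k}(S) = O(n^{k-2-i})\cdot O(|\mathcal{H}_k|) = O(n^{2k-4-i}),
\]
using $\sum_{S} d_{\mathcal{H}_k}(S) = \binom{k}{i}|\mathcal{H}_k|$ together with $|\mathcal{H}_k| \le C_1 n^{k-2}$.

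For the final bound $C_{\mathcal{H}_k}(2,k-1) = O(n^{k-2+(k-3)/(k-1)}/(\log n)^{\beta})$, I would use the definition of $\mathcal{H}_k$ directly rather than Theorem~\ref{THM:FF85}. Given a pair $E, E' \in \mathcal{H}_k$ with $|E \cap E'| = k-1$, the set $T := E \cap E'$ is a $(k-1)$-subset of $E$; by the definition of $\mathcal{H}_k$ every such subset lies outside $\mathcal{H}_{k-1}$, so $d_{\mathcal{H}}(T) < M := n^{(k-3)/(k-1)}/(\log n)^{\beta}$. Parameterizing pairs by their $(k-1)$-intersection then gives
\[
C_{\mathcal{H}_k}(2,k-1) = \sum_{T \in \binom{V}{k-1}} \binom{d_{\mathcal{H}_k}(T)}{2} \le \frac{M}{2}\sum_{T} d_{\mathcal{H}_k}(T) \le \frac{Mk|\mathcal{H}_k|}{2} = O\bigl(M\cdot n^{k-2}\bigr),
\]
which is exactly the stated estimate.

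The step requiring the most care is the link analysis in the middle range: one must verify that the Frankl--F\"{u}redi exponent $\max\{k-2-i,\,(k-i)-(k-2-i)-1\}$ simplifies to $k-2-i$ throughout $2 \le i \le k-3$ (it does, since $k-2-i \ge 1$ there), and that the induced $(k-i,\,k-2-i)$-omitting structure of the link is correctly identified even when $k-i$ is as small as $3$. Once that is in hand, the remaining arguments are standard double counting.
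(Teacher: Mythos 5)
Your proof is correct and follows essentially the same route as the paper: for the middle range you apply the Frankl--F\"{u}redi bound to the link (which, just as you argue, is a $(k-i,k-2-i)$-omitting system) and double-count pairs by their intersection; for $i=k-1$ you use the codegree cap built into the definition of $\mathcal{H}_k$; and the cases $i=k-2$ (trivially zero) and the average-degree bound are immediate from $|\mathcal{H}|\le C_1 n^{k-2}$. The only cosmetic difference is that you organize the double count as $\sum_S \binom{d(S)}{2}$ while the paper writes it as $|\mathcal{H}_k|\cdot\binom{k}{i}\cdot\max_S d(S)$, which are equivalent up to constants.
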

\begin{proof}[Proof of Claim~\ref{CLAIM:H-k}]
First, it is clear that $C_{\mathcal{H}_k}(2,k-2) = 0$ since there is no pair of edges in $\mathcal{H}_k$
with an intersection of size $k-2$.

Let $2\le i \le k-3$ and $S\subset V$ be a set of size $i$.
Since $\mathcal{H}_k$ is an $(n,k,k-2)$-omitting system,
the link $L_{\mathcal{H}_k}(S)$ is an $(n,k-i,k-2-i)$-omitting system.
So, by Theorem~\ref{THM:FF85}, $|L_{\mathcal{H}_k}(S)| = O\left(n^{k-2-i}\right)$,
which implies that
\begin{align}
C_{\mathcal{H}_k}(2,i)
\le |\mathcal{H}_k| \times \binom{k}{i} \times O\left(n^{k-2-i}\right)
= O\left(n^{2k-4-i}\right)
\quad{\rm for}\quad 2\le i \le k-3. \notag
\end{align}

Now let $S\subset V$ be a set of size $k-1$.
By the definition of $\mathcal{H}_{k}$, $d_{\mathcal{H}_{k}}(S) \le n^{2/(k-1)}/(\log n)^{\beta}$.
Therefore,
\begin{align}
C_{\mathcal{H}_k}(2,k-1)
\le |\mathcal{H}_k| \times \binom{k}{k-1} \times n^{\frac{k-3}{k-1}}/(\log n)^{\beta}
= O\left(n^{k-2+\frac{k-3}{k-1}}/(\log n)^{\beta}\right). \notag
\end{align}
\end{proof}

Since
\begin{align}
1+\left(k-4+\frac{2}{k-1}\right)\frac{2k-1-i}{k-2} - \left(2k-4-i\right)
= \frac{2(i-1)}{k-1}>\epsilon, \notag
\end{align}
by Claim~\ref{CLAIM:H-k},
\begin{align}
C_{\mathcal{H}_k}(2,i)
= O\left(n^{2k-4-i}\right)
= o\left(n\left({D}/{\log n}\right)^{\frac{2k-i-1}{k-1-1}}\right) \quad{\rm for}\quad 2 \le i \le k-3. \notag
\end{align}
Moreover, $C_{\mathcal{H}_k}(2,k-2) = 0 \ll n\left({D}/{\log n}\right)^{\frac{2k-(k-2)-1}{k-1-1}}$,
and
\begin{align}
C_{\mathcal{H}_k}(2,k-1) =
 O\left(\frac{n^{k-2+\frac{k-3}{k-1}}}{(\log n)^{\beta}}\right)
\ll
\frac{n^{k-2+\frac{k-3}{k-1}}}{(\log n)^{(1-\beta)\frac{k}{k-2}}}=
n\left(\frac{D}{\log n}\right)^{\frac{2k-(k-1)-1}{k-1-1}}, \notag
\end{align}
where the inequality follows from the assumption that $\beta> \frac{k}{2(k-1)}$.
Therefore, $\mathcal{H}_k$ satisfies condition $(d)$ in Theorem~\ref{THM:common-indep-set}.

So, by Theorem~\ref{THM:common-indep-set}, there exists a set $I\subset V$ of size
$\Omega\left(\omega\cdot {n}/{n^{\frac{k-3}{k-1}}}\right) = \Omega\left(n^{{2}/{(k-1)}}\omega\right)$
such that $I$ is independent in both $\mathcal{H}_{k-1}$ and $\mathcal{H}_k$.
Here
\begin{align}
\omega
= \left( \log\left( \left( (\log n)/{D} \right)^{ \frac{k_2-1}{k_1-1} } d \right) \right)^{1/(k_2-1)}
& = \left( \log \left( \log n \right)^{ (1-\beta) \frac{k-1}{k-2} } \right)^{1/(k-1)} \notag\\
& = \Omega\left( \left(\log \log n\right)^{1/(k-1)} \right). \notag
\end{align}
\end{proof}

\section{Proof of Theorem~\ref{THM:indep-number-ell-small}}\label{SEC:omit-system-ell-small}

\subsection{Lower bound}
We prove the lower bound in Theorem~\ref{THM:indep-number-ell-small} in this section.
The proof idea is similar to that used in the proof of Theorem~\ref{THM:indep-ell-k-2},
that is, we decompose an $(n,k,\ell)$-omitting system into many different hypergraphs so that each hypergraph
contains the information of a certain subset of edges in the original hypergraph.
Then we use a probabilistic argument to show that there exists a large common independent set of these hypergraphs.

Recall that an $n$-vertex $k$-graph $\mathcal{H}$ is an $(n,k,\ell,\lambda)$-omitting system iff it is
$S_{\lambda+1}(\ell)$-free.
While Theorem~\ref{THM:indep-number-ell-small} as stated provides a lower bound on the independence number
of $(n,k,\ell)$-omitting systems, the result holds in the more general setting of $(n,k,\ell,\lambda)$-omitting systems.
We present the proof in this more general setting.

Let $k\ge k_0 > \ell \ge 1$, $\lambda \ge 2$, and $\mathcal{H}$ be an $S_{\lambda}(\ell)$-free $k$-graph.
We say $\mathcal{H}$ is {\em $k_0$-indecomposable} if
\begin{itemize}
\item $k = k_0$, or
\item $k > k_0$ and $\mathcal{H}$ is $\{S_{\lambda_1}(k-1),\ldots,S_{\lambda_{k-k_0}}(k_0)\}$-free,
        where $\lambda_{i} = \left(k \lambda\right)^{2^{i-1}}$ for $i\in[k-k_0]$.
\end{itemize}
Otherwise, we say $\mathcal{H}$ is {\em $k_0$-decomposable}.

Call a family $\mathcal{F}$ of hypergraphs $k_0$-indecomposable if every member in it is $k_0$-indecomposable.
Otherwise, we say $\mathcal{F}$ is $k_0$-decomposable.

\textbf{The decomposition algorithm.}\\
\textbf{Input:} An $S_{\lambda}(\ell)$-free $k$-graph $\mathcal{H}$ and a threshold $k_0$ with $k\ge k_0> \ell$.\\
\textbf{Output:} A family $\mathcal{F}$ of $S_{\lambda}(\ell)$-free $k_0$-indecomposable hypergraphs.\\
\textbf{Operation:} We start with the family $\mathcal{F} = \{\mathcal{H}\}$.
If $\mathcal{F}$ is $k_0$-indecomposable, then we terminate this algorithm.
Otherwise, let $\mathcal{G} \in \mathcal{F}$ be a $k_0$-decomposable hypergraph
and let $k'$ denote the size of each edge in $\mathcal{G}$.
Let $i_0 \in \{1,\ldots,k'-k_0\}$ be the smallest integer such that $\mathcal{G}$ contains a copy of $S_{\lambda_{i_0}}(k'-i_0)$,
where $\lambda_{i_0} = \left(k \lambda\right)^{2^{i_0-1}}$.
Define
\begin{align}
\mathcal{G}_{k'-i_0} = \left\{A \in \binom{V(\mathcal{H})}{k'-i_0}: d_{\mathcal{G}}(A) \ge \lambda_{i_0} \right\}
\quad{\rm and}\quad
\mathcal{G}_{k'} = \left\{B \in \mathcal{G}: \binom{B}{k'-i_0}\cap \mathcal{G}_{k'-i_0} = \emptyset \right\}. \notag
\end{align}
Update $\mathcal{F}$ by removing $\mathcal{G}$ and adding $\mathcal{G}_{k'-i_0}$ and $\mathcal{G}_{k'}$.
Repeat this operation until $\mathcal{F}$ is $k_0$-indecomposable.

We need the following lemmas to show that the algorithm defined above always terminates. Write $\nu(\mathcal{H})$ for the size of a maximum matching in $\mathcal{H}$.

\begin{lemma}\label{LEMMA:S-free-hypergraph-has-large-matching-number}
Let $\mathcal{H}$ be an $\left\{S_{\lambda_{1}}(k-1),\ldots, S_{\lambda_{k-1}}(1) \right\}$-free $k$-graph with $m$ edges.
Then  $$\nu(\mathcal{H})\ge \frac{{m}}{\prod_{i=1}^{k-1}(i+1)\lambda_i}.$$
\end{lemma}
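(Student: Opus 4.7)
The plan is to induct on $k$, using the link of a vertex to reduce a $k$-graph to a $(k{-}1)$-graph.

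For the base case $k=2$, the hypergraph $\mathcal{H}$ is simply $S_{\lambda_1}(1)$-free, i.e., a graph with maximum degree at most $\lambda_1-1$. The standard greedy argument (any maximal matching $M$ has vertex set covering all $m$ edges, so $m\le 2|M|\Delta$) yields $\nu(\mathcal{H})\ge m/(2\lambda_1)$, matching the claim since $\prod_{i=1}^{1}(i+1)\lambda_i=2\lambda_1$.

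For the inductive step, fix $v\in V(\mathcal{H})$ and consider the link $L_v:=L_{\mathcal{H}}(\{v\})$, a $(k{-}1)$-graph with exactly $d_{\mathcal{H}}(v)$ edges. The key structural observation is that $L_v$ inherits sunflower-freeness in two complementary ways. First, for each $i\in\{1,\dots,k-2\}$, a hypothetical $S_{\lambda_i}(k-1-i)$ in $L_v$ — say $\mu$ edges $F_1,\dots,F_{\lambda_i}$ with common pairwise intersection $C$ of size $k-1-i$ — lifts to edges $F_j\cup\{v\}\in \mathcal{H}$ whose pairwise intersections all equal $C\cup\{v\}$, a set of size $k-i$ (since $v\notin F_j\supseteq C$). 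This is exactly an $S_{\lambda_i}(k-i)$ in $\mathcal{H}$, contradicting the hypothesis; hence $L_v$ is $\{S_{\lambda_1}(k-2),\dots,S_{\lambda_{k-2}}(1)\}$-free. Second, the forbidden sunflower $S_{\lambda_{k-1}}(1)$ in $\mathcal{H}$ translates, for the same reason with $C=\emptyset$, to forbidding $\lambda_{k-1}$ pairwise disjoint edges in $L_v$, so $\nu(L_v)\le \lambda_{k-1}-1$.

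Applying the inductive hypothesis to $L_v$ gives
\begin{align}
\frac{d_{\mathcal{H}}(v)}{\prod_{i=1}^{k-2}(i+1)\lambda_i}\;\le\;\nu(L_v)\;\le\;\lambda_{k-1}-1, \notag
\end{align}
so $d_{\mathcal{H}}(v)\le (\lambda_{k-1}-1)\prod_{i=1}^{k-2}(i+1)\lambda_i$. Taking the maximum over $v$ bounds $\Delta(\mathcal{H})$. Finally, the greedy maximal matching bound for $k$-graphs (for any maximal $M$, the vertex set $V(M)$ of size $k|M|$ meets every edge, hence $m\le k|M|\,\Delta(\mathcal{H})$) produces
\begin{align}
\nu(\mathcal{H})\;\ge\;\frac{m}{k\,\Delta(\mathcal{H})}\;\ge\;\frac{m}{k(\lambda_{k-1}-1)\prod_{i=1}^{k-2}(i+1)\lambda_i}\;\ge\;\frac{m}{\prod_{i=1}^{k-1}(i+1)\lambda_i}, \notag
\end{align}
completing the induction.

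The only delicate point is the structural analysis of $L_v$: combining the inherited sunflower-freeness (which yields an inductive lower bound on $\nu(L_v)$ in terms of $d_{\mathcal{H}}(v)$) with the disjoint-edge obstruction (which gives an absolute upper bound on $\nu(L_v)$) is what pins down $\Delta(\mathcal{H})$ sharply enough to close the induction. Everything else — the base case, the sunflower-lifting verification, and the greedy matching lower bound — is routine.
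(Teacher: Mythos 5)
Your proof is correct and follows essentially the same route as the paper: induction on $k$, applying the inductive hypothesis to vertex links (noting that $S_{\lambda_i}(k-i)$-freeness of $\mathcal{H}$ passes to $S_{\lambda_i}(k-1-i)$-freeness of links, while $S_{\lambda_{k-1}}(1)$-freeness caps $\nu(L_v)$) to bound $\Delta(\mathcal{H})$, then finishing with the greedy maximal matching bound. The only cosmetic difference is that you state the degree bound as a direct consequence of the inductive hypothesis, whereas the paper phrases it as a proof by contradiction (also note the stray symbol $\mu$ where you meant $\lambda_i$).
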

\begin{proof}[Proof of Lemma~\ref{LEMMA:S-free-hypergraph-has-large-matching-number}]
For $j\in [k-1]$ let $\Lambda_j = \prod_{i=1}^{j}(i+1)\lambda_i$.
We prove this lemma by induction on $k$.
Suppose that $k = 2$.
Since $\mathcal{H}$ is $S_{\lambda_{1}}(1)$-free,
$d_{\mathcal{H}}(v) \le \lambda_1-1$ for all $v \in V(\mathcal{H})$.
Therefore, by greedily choosing an edge $e$ and removing all edges that have nonempty intersection with $e$,
we obtain at least $m/(2\lambda_1)$ pairwise disjoint edges in $\mathcal{H}$.

Now suppose that $k \ge 3$.
We claim that $d_{\mathcal{H}}(v) \le (\lambda_{k-1}-1)\Lambda_{k-2}$ for all $v \in V(\mathcal{H})$.
Indeed, suppose to the contrary that there exists $v_0 \in V(\mathcal{H})$
with $d_{\mathcal{H}}(v_0) \ge (\lambda_{k-1}-1)\Lambda_{k-2} +1$.
Since $\mathcal{H}$ is $\left\{S_{\lambda_{1}}(k-1),\ldots, S_{\lambda_{k-2}}(2)\right\}$-free,
the link $L_{\mathcal{H}}(v_0)$ is $\left\{S_{\lambda_{1}}(k-2),\ldots, S_{\lambda_{k-2}}(1)\right\}$-free.
By the induction hypothesis,
$$\nu(L_{\mathcal{H}}(v_0)) \ge \frac{(\lambda_{k-1}-1)\Lambda_{k-2} +1}{\Lambda_{k-2}} > \lambda_{k-1}-1,$$
but this contradicts the assumption that $\mathcal{H}$ is $S_{\lambda_{k-1}}(1)$-free.
Therefore, $d_{\mathcal{H}}(v) \le (\lambda_{k-1}-1)\Lambda_{k-2}$ for all $v \in V(\mathcal{H})$.
Then,
similar to the case of $k=2$, by greedily choosing an edge $e$ and removing all edges that have nonempty intersection with $e$,
we obtain
$$\nu(\mathcal{H}) \ge \frac{m}{k(\lambda_{k-1}-1)\Lambda_{k-2}+1} > \frac{m}{\Lambda_{k-1}}$$
completing the proof.
\end{proof}

Let $\mathcal{H}$ be an $S_{\lambda}(\ell)$-free $k$-graph.
Define
\begin{align}
\mathcal{H}_{k-1} = \left\{A\in \binom{V(\mathcal{H})}{k-1}: d_{\mathcal{H}}(A) \ge k \lambda \right\}. \notag
\end{align}
If $\mathcal{H}$ is $\left\{S_{\lambda'_{1}}(k-1), \ldots, S_{\lambda'_{k-k'-1}}(k'+1),S_{\lambda}(\ell)\right\}$-free
for some $\ell < k'\le k-2$,
then also define
\begin{align}
\mathcal{H}_{k'} = \left\{A \in \binom{V}{k'}: d_{\mathcal{H}}(A) \ge
k \lambda \prod_{i=1}^{k-k'-1}(i+1)\lambda'_i \right\}. \notag
\end{align}

\begin{lemma}\label{LEMMA:decomposed-hypergraphs-S-free}
The hypergraphs $\mathcal{H}_{k'}$ and $\mathcal{H}_{k-1}$ defined above are $S_{\lambda}(\ell)$-free.
\end{lemma}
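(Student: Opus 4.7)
The plan is to prove both claims by contradiction: any $S_{\lambda}(\ell)$-copy in $\mathcal{H}_{k-1}$ or $\mathcal{H}_{k'}$ should lift to an $S_{\lambda}(\ell)$-copy inside $\mathcal{H}$, contradicting that $\mathcal{H}$ itself is $S_{\lambda}(\ell)$-free. The degree thresholds in the two definitions were chosen exactly to make the greedy extension go through, so the proof is essentially a counting/greedy argument.

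For $\mathcal{H}_{k-1}$: I would assume a copy $A_1,\dots,A_\lambda \in \mathcal{H}_{k-1}$ of $S_\lambda(\ell)$ with center $S$, $|S|=\ell$. Each $A_i$ has $d_{\mathcal{H}}(A_i)\ge k\lambda$ vertices $v$ with $A_i\cup\{v\}\in\mathcal{H}$. I would sequentially choose, for $i=1,\dots,\lambda$, a vertex $v_i$ with $A_i\cup\{v_i\}\in\mathcal{H}$, avoiding $\bigcup_{j\ne i}(A_j\setminus S)$ (so that $(A_i\cup\{v_i\})\cap(A_j\cup\{v_j\})=S$) and avoiding the already-chosen $v_1,\dots,v_{i-1}$. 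The size of the forbidden set is at most $(\lambda-1)(k-1-\ell)+(\lambda-1)=(\lambda-1)(k-\ell)<k\lambda$, so a valid $v_i$ exists at every step. The edges $A_i\cup\{v_i\}$ then form $S_\lambda(\ell)$ in $\mathcal{H}$, a contradiction.

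For $\mathcal{H}_{k'}$: The same strategy applies, but with $(k-k')$-sets in place of single vertices. Suppose $A_1,\dots,A_\lambda$ form $S_\lambda(\ell)$ in $\mathcal{H}_{k'}$ with center $S$. The key observation is that because $\mathcal{H}$ is $\{S_{\lambda'_1}(k-1),\dots,S_{\lambda'_{k-k'-1}}(k'+1)\}$-free, each link $L_{\mathcal{H}}(A_i)$, which is a $(k-k')$-graph, is $\{S_{\lambda'_1}(k-k'-1),\dots,S_{\lambda'_{k-k'-1}}(1)\}$-free. By the definition of $\mathcal{H}_{k'}$, we have $|L_{\mathcal{H}}(A_i)|\ge k\lambda\prod_{i=1}^{k-k'-1}(i+1)\lambda'_i$, so Lemma~\ref{LEMMA:S-free-hypergraph-has-large-matching-number} gives a matching $M_i\subseteq L_{\mathcal{H}}(A_i)$ with $|M_i|\ge k\lambda$. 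I would then greedily pick, for each $i$, a set $F_i\in M_i$ avoiding $\bigcup_{j\ne i}(A_j\setminus S)$ and avoiding all previously selected $F_j$. Because $M_i$ is a matching, every forbidden vertex eliminates at most one candidate in $M_i$, and the total number of forbidden vertices is at most $(\lambda-1)(k'-\ell)+(\lambda-1)(k-k')=(\lambda-1)(k-\ell)<k\lambda$. Thus a valid $F_i$ always exists, and $\{A_i\cup F_i\}_{i=1}^\lambda$ is an $S_\lambda(\ell)$ in $\mathcal{H}$, a contradiction.

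The main step requiring care is the use of Lemma~\ref{LEMMA:S-free-hypergraph-has-large-matching-number}: one must check that the freeness assumption on $\mathcal{H}$ transfers to the link in the correct form, which is exactly why the shifts $S_{\lambda'_j}(k-j)\mapsto S_{\lambda'_j}(k-k'-j)$ match the hypotheses of that lemma. Once that is in place, the rest is pure bookkeeping: the threshold $k\lambda$ was chosen precisely so that after subtracting the at most $(\lambda-1)(k-\ell)$ conflicts, at least one admissible extension remains at every greedy step. No further auxiliary tools should be needed.
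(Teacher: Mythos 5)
Your proposal is correct and follows essentially the same route as the paper: assume a copy of $S_\lambda(\ell)$ in $\mathcal{H}_{k'}$, observe that the links $L_{\mathcal{H}}(A_i)$ inherit the right freeness properties, invoke Lemma~\ref{LEMMA:S-free-hypergraph-has-large-matching-number} to extract a matching of size at least $k\lambda$ in each link, and then greedily lift to disjoint extensions forming $S_\lambda(\ell)$ in $\mathcal{H}$. The only difference is that you spell out the greedy counting (at most $(\lambda-1)(k-\ell)$ forbidden vertices, each killing at most one matching edge) that the paper compresses into one sentence, and you handle $\mathcal{H}_{k-1}$ directly without the matching lemma (which the paper dismisses as ``basically the same''); both are harmless elaborations, not a different method.
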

\begin{proof}[Proof of Lemma~\ref{LEMMA:decomposed-hypergraphs-S-free}]
We may only prove that $\mathcal{H}_{k'}$ is $S_{\lambda}(\ell)$-free,
since the proof for $\mathcal{H}_{k-1}$ is basically the same.
Suppose to the contrary that
there exists $\{A_1,\ldots,A_{\lambda}\} \subset \mathcal{H}_{k'}$ forming a copy of $S_{\lambda}(\ell)$.
Since $\mathcal{H}$ is $\{S_{\lambda'_{1}}(k-1), \ldots, S_{\lambda'_{k-k'-1}}(k'+1)\}$-free,
the link $L_{\mathcal{H}}(A_i)$ is $\{S_{\lambda'_{1}}(k-k'-1), \ldots, S_{\lambda'_{k-k'-1}}(1)\}$-free
for $i\in[\lambda]$.
Let $\Lambda' = \prod_{i=1}^{k-k'-1}(i+1)\lambda'_i$.
It follows from the definition of $\mathcal{H}_{k'}$ that
$|L_{\mathcal{H}}(A_i)| \ge k \lambda \Lambda'$ for $i\in[\lambda]$.
So, by Lemma~\ref{LEMMA:S-free-hypergraph-has-large-matching-number},
there are at least $k \lambda \Lambda'/\Lambda'\ge k \lambda$
pairwise disjoint edges in $L_{\mathcal{H}}(A_i)$ for $i\in[\lambda]$.
Therefore, there exist $\lambda$ pairwise disjoint $(k-k')$-sets $B_1,\ldots,B_{\lambda}$
such that $B_i \subset V\setminus\left(\bigcup_{i=1}^{\lambda}A_i\right)$
and $E_i= A_i \cup B_i \in \mathcal{H}$ for $i \in[\lambda]$.
It is clear that $\{E_1,\ldots,E_{\lambda}\}$ is a copy of $S_{\lambda}(\ell)$ in $\mathcal{H}$, a contradiction.
\end{proof}

Recall that in the decomposition algorithm we defined
\begin{align}
\mathcal{G}_{k'-i_0} = \left\{A \in \binom{V(\mathcal{H})}{k'-i_0}: d_{\mathcal{G}}(A) \ge \lambda_{i_0} \right\},
\quad{\rm and}\quad
\mathcal{G}_{k'} = \left\{B \in \mathcal{G}: \binom{B}{k'-i_0}\cap \mathcal{G}_{k'-i_0} = \emptyset \right\},\notag
\end{align}
where $i_0 \in \{1,\ldots,k'-k_0\}$ is the smallest integer such that $\mathcal{G}$ contains a copy of $S_{\lambda_{i_0}}(k'-i_0)$
and $\lambda_{i_0} = \left(k \lambda\right)^{2^{i_0-1}}$.
It is clear from the definition that $\mathcal{G}_{k'}$ is $S_{\lambda_{i_0}}(k'-i_0)$-free.
On the other hand,
Lemma~\ref{LEMMA:decomposed-hypergraphs-S-free} implies that $\mathcal{G}_{k'-i_0}$ is $S_{\lambda}(\ell)$-free.
Therefore, the new hypergraphs $\mathcal{G}_{k'-i_0}$ and $\mathcal{G}_{k'}$ we added into $\mathcal{F}$
either have a smaller edge size (the case $\mathcal{G}_{k'-i_0}$) or forbid one more hypergraph (the case $\mathcal{G}_{k'}$).
So the algorithm must terminate after finite many steps, and
it is easy to see that the outputted family $\mathcal{F}$ has size at most $2^{k-k_0}$.

The following lemma shows that in order to find a large independent set in $\mathcal{H}$
it suffices to find a large common independent set of all hypergraphs in $\mathcal{F}$.

\begin{lemma}\label{LEMMA:common-indep-set-of-F-is-indep-in-H}
Let $\mathcal{H}$ be an $S_{\lambda}(\ell)$-free $k$-graph and $\mathcal{F}$ be the outputted family
after applying the decomposition algorithm to $\mathcal{H}$.
Then
\begin{align}
\alpha(\mathcal{H}) \ge \alpha\left(\bigcup_{\mathcal{G}\in \mathcal{F}}\mathcal{G}\right). \notag
\end{align}
\end{lemma}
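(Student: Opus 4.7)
The plan is to prove the statement by induction on the number of replacement steps performed by the decomposition algorithm. The base case is the step $\mathcal{F} = \{\mathcal{H}\}$ (before any replacement), for which $\alpha(\mathcal{H}) = \alpha\bigl(\bigcup_{\mathcal{G}\in\mathcal{F}}\mathcal{G}\bigr)$ trivially. So it suffices to show that a single replacement step does not decrease the common independence number of the current family.

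Concretely, suppose at some point the algorithm selects a $k_0$-decomposable member $\mathcal{G}$ and replaces it by the two hypergraphs $\mathcal{G}_{k'-i_0}$ and $\mathcal{G}_{k'}$. Let $\mathcal{F}$ denote the family before the replacement and $\mathcal{F}'$ the family after. We must verify that every set $I \subset V(\mathcal{H})$ that is independent in $\bigcup_{\mathcal{G}'\in\mathcal{F}'}\mathcal{G}'$ is also independent in $\bigcup_{\mathcal{G}'\in\mathcal{F}}\mathcal{G}'$; since the two unions agree on all members other than $\mathcal{G}$, $\mathcal{G}_{k'-i_0}$, $\mathcal{G}_{k'}$, this reduces to the claim that any $I$ independent in both $\mathcal{G}_{k'-i_0}$ and $\mathcal{G}_{k'}$ is independent in $\mathcal{G}$.

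The key step is a simple dichotomy using the definitions
\begin{align*}
\mathcal{G}_{k'-i_0} &= \left\{A \in \binom{V(\mathcal{H})}{k'-i_0}: d_{\mathcal{G}}(A) \ge \lambda_{i_0}\right\}, \\
\mathcal{G}_{k'} &= \left\{B \in \mathcal{G}: \binom{B}{k'-i_0}\cap \mathcal{G}_{k'-i_0} = \emptyset\right\}.
\end{align*}
Suppose $I$ is independent in both and, for contradiction, $E \in \mathcal{G}$ with $E \subset I$. If $\binom{E}{k'-i_0}\cap \mathcal{G}_{k'-i_0}= \emptyset$, then $E \in \mathcal{G}_{k'}$ and $E \subset I$, contradicting that $I$ is independent in $\mathcal{G}_{k'}$. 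Otherwise there is some $A \in \binom{E}{k'-i_0}$ with $A \in \mathcal{G}_{k'-i_0}$; since $A \subset E \subset I$, this contradicts that $I$ is independent in $\mathcal{G}_{k'-i_0}$. Either way we reach a contradiction, so $I$ is independent in $\mathcal{G}$, completing the inductive step.

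There is no real obstacle: the statement is essentially a tautological consequence of how the algorithm splits an edge set of $\mathcal{G}$ into those edges whose $(k'-i_0)$-subsets all have low $\mathcal{G}$-degree (kept as $k'$-edges in $\mathcal{G}_{k'}$) and a new shadow layer recording the high-degree $(k'-i_0)$-sets (placed in $\mathcal{G}_{k'-i_0}$). After the bounded number ($\le 2^{k-k_0}$) of such replacements needed to reach $k_0$-indecomposability, iterating the one-step containment yields the desired bound $\alpha(\mathcal{H}) \ge \alpha\bigl(\bigcup_{\mathcal{G}\in \mathcal{F}}\mathcal{G}\bigr)$.
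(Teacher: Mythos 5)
Your proof is correct and takes essentially the same approach as the paper: the paper asserts directly that every edge $E\in\mathcal{H}$ contains a subset $E'$ lying in some member of the final family $\mathcal{F}$, and you establish exactly this fact by induction on the replacement steps, making explicit the one-step dichotomy (either $E\in\mathcal{G}_{k'}$, or some $(k'-i_0)$-subset of $E$ lies in $\mathcal{G}_{k'-i_0}$) that the paper treats as evident from the algorithm's definition.
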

\begin{proof}[Proof of Lemma~\ref{LEMMA:common-indep-set-of-F-is-indep-in-H}]
Suppose that $\mathcal{F} = \{\mathcal{H}_1,\ldots, \mathcal{H}_m\}$
and $I\subset V(\mathcal{H})$ is independent in $\mathcal{H}_i$ for $i\in[m]$.
It is clear from the definition that for every $E\in \mathcal{H}$
there is a subset $E'\subset E$ such that $E'\in \mathcal{H}_i$ for some $i\in[m]$.
Since $I$ is independent $\mathcal{H}_i$, $E' \not\subset I$ and it follows that $E\not\subset I$.
Therefore, $I$ is independent in $\mathcal{H}$.
\end{proof}

We also need the following lemma which gives a upper bound for the size of an indecomposable hypergraph.

\begin{theorem}[Deza-Erd\H{o}s-Frankl \cite{DEF78}]\label{THM:Deza-Erdos-Frankl-L-system}
Let $r\ge 1$, $t\ge 2$ be integers and $L = \{\ell_1,\ldots,\ell_{r}\}$ be a set of integers with
$0 \le \ell_1<\cdots<\ell_r<k$.
If an $n$-vertex $k$-graph $\mathcal{H}$ is $S_{t}(\ell)$-free for every $\ell\in [k]\setminus L$,
then $|\mathcal{H}| = O(n^{r-1})$ unless
$(\ell_2-\ell_1)\mid \cdots \mid (\ell_r-\ell_{r-1}) \mid (k-\ell_r)$.
\end{theorem}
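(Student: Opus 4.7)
The plan is to use the Erd\H{o}s-Ko sunflower lemma (stating that any $k$-uniform family of size exceeding $k!(t-1)^k$ contains a sunflower with $t$ petals) in combination with induction on $r$. Since $\mathcal{H}$ is $S_t(\ell)$-free for every $\ell \in [k] \setminus L$, any $t$-petal sunflower extracted from $\mathcal{H}$ must have its core size in $L$; this is the key structural leverage and the starting point of every step.

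For the inductive step I would assume $|\mathcal{H}| \ge C n^{r-1}$ for a large constant $C$ and derive the divisibility chain. By averaging, one finds an $\ell_i$-set $T$ whose link $\mathcal{H}_T = L_\mathcal{H}(T)$ is a $(k - \ell_i)$-uniform hypergraph of size $\Omega(n^{r - 1 - \ell_i})$. A sunflower of $t$ petals with core of size $m$ in $\mathcal{H}_T$ lifts to one in $\mathcal{H}$ with core of size $\ell_i + m$, so $\mathcal{H}_T$ is $S_t(m)$-free whenever $\ell_i + m \notin L$. Thus the effective forbidden-core set for $\mathcal{H}_T$ is $L - \ell_i = \{\ell_j - \ell_i : j > i\}$, of size $r - i$, and the original divisibility chain shifts to $(\ell_{i+2} - \ell_{i+1}) \mid \cdots \mid (\ell_r - \ell_{r-1}) \mid (k - \ell_r)$ for the link. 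The strategy is to choose $i$ just below the first index where the original chain fails; then that failure is preserved in the reduced instance, and the inductive hypothesis applies to give $|\mathcal{H}_T| = O(n^{(r-i)-1})$.

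The main obstacle will be picking this index $i$ correctly and comparing the resulting upper bound $O(n^{r-i-1})$ for $|\mathcal{H}_T|$ against the averaging lower bound $\Omega(n^{r-1-\ell_i})$; the contradiction closes provided $\ell_i \ge i$, a constraint one must ensure via a suitable choice of the reduction point (using the strict inequalities $\ell_1 < \cdots < \ell_r$). The base case $r = 2$, with $L = \{\ell_1, \ell_2\}$ and $(\ell_2 - \ell_1) \nmid (k - \ell_2)$, would be handled directly: pass to the link of an $\ell_1$-set to obtain a $(k - \ell_1)$-graph in which only cores of size $\ell_2 - \ell_1$ are permitted, then observe that iterated sunflower extraction forces a chain of core-size increments all divisible by $\ell_2 - \ell_1$, which can never reach $k - \ell_1$ when the divisibility condition fails. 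The truly delicate part of the argument is this modular obstruction on iterated cores, together with boundary cases such as $\ell_1 = 0$; these are where I expect most of the technical work to lie, and where DEF78's cleverness presumably resides.
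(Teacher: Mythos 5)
Note first that the paper does not prove this statement: it is quoted as a known theorem of Deza, Erd\H{o}s, and Frankl \cite{DEF78} and invoked in the proof of Lemma~\ref{LEMMA:size-indecomposable-hypergraphs}, so there is no in-paper argument to compare against. Your sketch, however, has a flaw worth flagging.

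The exponent comparison in your inductive step goes the wrong way. From $|\mathcal{H}| \ge C n^{r-1}$, averaging gives an $\ell_i$-set $T$ with $|\mathcal{H}_T| \ge c\, C\, n^{r-1-\ell_i}$, while the inductive hypothesis (applied with $L' = \{\ell_{i+1}-\ell_i,\ldots,\ell_r-\ell_i\}$, $|L'| = r-i$) gives $|\mathcal{H}_T| \le C_0\, n^{r-i-1}$ with $C_0$ independent of $C$. These clash only when $r-1-\ell_i \ge r-i-1$, that is when $\ell_i \le i$ --- the opposite of the condition $\ell_i \ge i$ you state. Since $0\le \ell_1 < \cdots < \ell_r$ forces $\ell_i \ge i-1$, the usable range is $\ell_i \in \{i-1,i\}$, which demands that $L$ start with an initial segment of consecutive integers. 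That fails already for $L=\{2,5\}$, $k=9$: the chain $3\mid 4$ fails and the theorem asserts $|\mathcal{H}|=O(n)$, yet $\ell_i > i$ for every $i$, so no choice of reduction point closes the gap (and one additionally needs $\ell_i\ge 1$ for the link to be a genuine reduction and $i<j_0$ for the chain failure to survive). So the reduction as written does not establish the theorem in its stated generality. Two smaller points: the sunflower lemma you quote is Erd\H{o}s--Rado, not Erd\H{o}s--Ko; and the base case $r=2$ --- which carries the real content here, since the $r=1$ statement is vacuous and the induction bottoms out there --- is too loosely described: the link of an $\ell_1$-set also admits sunflower cores of size $0$, and the claim that iterated core extraction ``can never reach $k-\ell_1$'' is not converted into an actual bound on $|\mathcal{H}|$.
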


\begin{lemma}\label{LEMMA:size-indecomposable-hypergraphs}
Let $\ell \ge 1$, $k\ge k_0 >\ell \ge 1$, $\lambda \ge 2$ be integers, $k>2\ell+1$, $k_0\ge \ell+3$,
and $\mathcal{H}$ be a $S_{\lambda}(\ell)$-free $k_0$-indecomposable $k$-graph with $n$ vertices.
Then there exists a constant $C_{k,\ell,\lambda}$ such that
$|\mathcal{H}| \le C_{k,\ell,\lambda}n^{\min\{k_0-2,k-\ell-1\}}$.
\end{lemma}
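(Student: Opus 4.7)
The plan is to split on whether $k=k_0$ and bound $|\mathcal{H}|$ by combining two known estimates. Since $\mathcal{H}$ is $S_{\lambda}(\ell)$-free and $k>2\ell+1$, the version of Theorem~\ref{THM:FF85} extended to fixed $\lambda$ (noted earlier in the paper) already gives $|\mathcal{H}|=O(n^{\max\{\ell,k-\ell-1\}})=O(n^{k-\ell-1})$. When $k=k_0$, the inequality $\ell\ge 1$ forces $k-\ell-1\le k-2 = k_0-2$, so the minimum is $k-\ell-1$ and we are done in this case.

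So assume $k>k_0$. By the definition of $k_0$-indecomposability, $\mathcal{H}$ is additionally $S_{\lambda_i}(k-i)$-free for every $i\in[k-k_0]$, with $\lambda_i=(k\lambda)^{2^{i-1}}$. Put $t:=\max\{\lambda,\lambda_1,\ldots,\lambda_{k-k_0}\}$; this is a constant depending only on $k$ and $\lambda$, and $\mathcal{H}$ is $S_{t}(j)$-free for every $j\in\{\ell\}\cup\{k_0,\ldots,k-1\}$. In the notation of Theorem~\ref{THM:Deza-Erdos-Frankl-L-system}, this puts $\mathcal{H}$ into the hypothesis of that theorem with
\begin{align*}
L=\{0,1,\ldots,k_0-1\}\setminus\{\ell\}, \qquad r=|L|=k_0-1.
\end{align*}

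The crucial step is to verify that the divisibility condition of Theorem~\ref{THM:Deza-Erdos-Frankl-L-system} fails. Writing $L=\{\ell_1<\ell_2<\cdots<\ell_{k_0-1}\}$, the hypotheses $\ell\ge 1$ and $k_0\ge\ell+3$ force $1\le\ell\le k_0-3$, so the values $\ell-1,\ell+1,\ell+2$ all lie in $L$. The resulting consecutive differences are $\ell_{\ell+1}-\ell_\ell=(\ell+1)-(\ell-1)=2$ followed by $\ell_{\ell+2}-\ell_{\ell+1}=(\ell+2)-(\ell+1)=1$. Since $2\nmid 1$, the chain $(\ell_2-\ell_1)\mid\cdots\mid(\ell_r-\ell_{r-1})\mid(k-\ell_r)$ is broken, and Theorem~\ref{THM:Deza-Erdos-Frankl-L-system} yields $|\mathcal{H}|=O(n^{r-1})=O(n^{k_0-2})$.

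Combining the two estimates gives $|\mathcal{H}|=O(n^{\min\{k_0-2,k-\ell-1\}})$, as desired. There is no serious technical obstacle; the one delicate point is the divisibility check, and it is precisely the hypothesis $k_0\ge\ell+3$ that guarantees two consecutive differences equal to $2$ and $1$ in $L$. The conceptual content of the proof is the observation that the forbidden-structure hypotheses imposed by $k_0$-indecomposability together with $S_{\lambda}(\ell)$-freeness assemble exactly into the Deza--Erd\H{o}s--Frankl framework with $L$ missing a single interior element of the initial segment $\{0,\ldots,k_0-1\}$.
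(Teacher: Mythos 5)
Your proof is correct and follows essentially the same route as the paper: combine the Frankl--F\"uredi bound $O(n^{k-\ell-1})$ with the Deza--Erd\H{o}s--Frankl bound $O(n^{k_0-2})$ obtained via $L=\{0,\dots,k_0-1\}\setminus\{\ell\}$. You are in fact more careful than the paper in one respect --- you explicitly verify that the divisibility chain fails (using $k_0\ge\ell+3$ to place both a difference of $2$ at position $\ell$ and a difference of $1$ immediately after), a step the paper's proof of this lemma leaves implicit.
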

\begin{proof}[Proof of Lemma~\ref{LEMMA:size-indecomposable-hypergraphs}]
Since $\mathcal{H}$ is $S_{\lambda}(\ell)$-free and $k>2\ell+1$, by the results in \cite{FF85},
$|\mathcal{H}| = O\left(n^{k-\ell-1}\right)$.
On the other hand, since $\mathcal{H}$ is $\{S_{\lambda_1}(k-1),\ldots,S_{\lambda_{k-k_0}}(k_0),S_{\lambda}(\ell)\}$-free,
applying Theorem~\ref{THM:Deza-Erdos-Frankl-L-system} to $\mathcal{H}$ with $t = \max\{\lambda_1,\ldots,\lambda_{k-k_0},\lambda\}$
and $L = \{0,1,\ldots,\ell-1,\ell+1,\ldots,k_0-1\}$
we obtain $|\mathcal{H}| = O\left(n^{k_0-2}\right)$.
\end{proof}

Now we are ready to prove the lower bound in Theorem~\ref{THM:indep-number-ell-small}.

\begin{proof}[Proof of the lower bound in Theorem~\ref{THM:indep-number-ell-small}]
We may assume that $k> 3\ell$ since otherwise by $(\ref{equ:lower-bound-g-Spencer})$ we are done.
Let $\mathcal{H}$ be an $S_{\lambda}(\ell)$-free $k$-graph on $n$ vertices and $V = V(\mathcal{H})$.
Apply the decomposition algorithm to $\mathcal{H}$ with the threshold $k_0=2\ell+1$,
and let $\mathcal{F}$ denote the outputted family.
Suppose that $\mathcal{F} = \{\mathcal{H}_{1},\ldots,\mathcal{H}_{m}\}$ for some integer $m$.
For $i\in[m]$ let $k_i$ denote the size of each edge in $\mathcal{H}_{i}$ and note from the definition of the algorithm
that $2\ell+1 \le k_i \le k$.
Let $C = \max\{C_{k_i,\ell,\lambda}: 2\ell+1\le k_i \le k\}$, where $C_{k_i,\ell,\lambda}$ is the constant given by
Lemma~\ref{LEMMA:size-indecomposable-hypergraphs}.
Choose a set $I \subset V$ such that every vertex is included in $I$ independently with probability
$p = \delta n^{-\frac{2\ell-2}{3\ell-1}}$, where $\delta > 0$ is a small constant
that satisfies $C m \delta^{2\ell} \le 1/4$.
Then by Lemma~\ref{LEMMA:size-indecomposable-hypergraphs},
\begin{align}
\mathbb{E}\left[ |I|-\sum_{i=1}^{m}\left|\mathcal{H}_{i}[I]\right| \right]
& = \mathbb{E}[|I|] - \sum_{i=1}^{m}\mathbb{E}[|\mathcal{H}_{i}[I]|] \notag\\
& \ge pn- \sum_{i=1}^{m} C p^{k_i}  n^{\min\{2\ell-1,k_i-\ell-1\}} \notag\\
& = pn- C \left(\sum_{i\in[m]: k_i \ge 3\ell}  p^{k_i}  n^{2\ell-1}
      + \sum_{i\in[m]: k_i \le 3\ell-1}  p^{k_i}  n^{k_i-\ell-1}\right)\notag\\
& \ge \delta n^{\frac{\ell+1}{3\ell-1}} - C m \delta^{3\ell} n^{\frac{\ell+1}{3\ell-1}} - C m \delta^{2\ell+1} n^{\frac{\ell+1}{3\ell-1}}
 \ge \delta n^{\frac{\ell+1}{3\ell-1}}/{2}. \notag
\end{align}
Therefore, there exists a set $I$ of size $\Omega\left(n^{\frac{\ell+1}{3\ell-1}}\right)$
such that $\mathcal{H}_i[I] = \emptyset$ for $i\in[m]$,
and it follow from Lemma~\ref{LEMMA:common-indep-set-of-F-is-indep-in-H} that
$\alpha(\mathcal{H}) \ge |I| =\Omega\left(n^{\frac{\ell+1}{3\ell-1}}\right)$.
\end{proof}

{\bf Remark.}
The lower bound $n^{\frac{\ell+1}{3\ell-1}}$ can be improved by optimizing the choice of $k_0$.
Indeed, suppose that $\ell$ is sufficiently large. Repeating the argument above we obtain
\begin{align}
\mathbb{E}\left[ |I|-\sum_{i=1}^{m}\left|\mathcal{H}_{i}[I]\right| \right]
& = \mathbb{E}[|I|] - \sum_{i=1}^{m}\mathbb{E}[|\mathcal{H}_{i}[I]|] \notag\\
& = pn - \left(\sum_{k_i\ge s}\mathbb{E}[|\mathcal{H}_{i}[I]|]
                + \sum_{2\ell+1\le k_i\le s}\mathbb{E}[|\mathcal{H}_{i}[I]|]
                + \sum_{k_0<k_i\le 2\ell}\mathbb{E}[|\mathcal{H}_{i}[I]|]\right) \notag\\
& \ge pn -  \left(\sum_{k_i\ge s}p^{k_i}n^{k_0-2}
                + \sum_{2\ell+1\le k_i< s}p^{k_i}n^{k_i-\ell-1}
                + \sum_{k_0\le k_i\le 2\ell}p^{k_i}n^{\ell}\right) \notag\\
& \ge pn -  Cm\left(p^{s}n^{k_0-2} + p^{s-1}n^{s-\ell-2}+p^{k_0}n^{\ell}\right), \notag
\end{align}
where $k_0$ and $s$ will be determined later.

Notice that we need $p \ll \min\left\{n^{-\frac{k_0-3}{s-1}}, n^{-\frac{s-\ell-3}{s-2}}, n^{-\frac{\ell-1}{k_0-1}} \right\}$.
So, by letting $\frac{k_0-3}{s-1} = \frac{s-\ell-3}{s-2} = \frac{\ell-1}{k_0-1}$, we obtain
$k_0^2-k_0\ell-\ell^2 \sim 0$ and $s\sim k_0+\ell$.
This implies that we should let
\begin{align}
k_0 =\left(\frac{\sqrt{5}+1}{2}+o_{\ell}(1) \right)\ell,
\quad
s = \left(\frac{\sqrt{5}+3}{2}+o_{\ell}(1) \right)\ell,
\quad{\rm and}\quad
p = \delta n^{-\left(\frac{\sqrt{5}-1}{2}+o_{\ell}(1) \right)}\notag
\end{align}
for some sufficiently small $\delta>0$.
Then we obtain
\begin{align}
\mathbb{E}\left[ |I|-\sum_{i=1}^{m}\left|\mathcal{H}_{i}[I]\right| \right]
\ge pn -  Cm\left(p^{s}n^{k_0-2} + p^{s-1}n^{s-\ell-2}+p^{k_0}n^{\ell}\right)
\ge \delta n^{\left(\frac{3-\sqrt{5}}{2}+o_{\ell}(1) \right)}/2,\notag
\end{align}
which implies that
$\mathcal{H}$ contains an independent set $I$ of size $\Omega\left(n^{\left(\frac{3-\sqrt{5}}{2}+o_{\ell}(1) \right)}\right)$.

Similarly, the lower bound for $g(n,6,2)$ can be improved from $\Omega\left(n^{3/5}\right)$
to $\Omega\left(n^{2/3}\right)$ by letting $k_0=4$.
Indeed, it is easy to see that when applying the decomposition algorithm to an $n$-vertex $S_{\lambda}(2)$-free $6$-graph $\mathcal{H}$
with the threshold $k_0=4$,
the outputted family $\mathcal{F}$  consists of three hypergraphs:
an $S_{\lambda}(2)$-free $4$-indecomposable $6$-graph $\mathcal{H}_1$,
an $S_{\lambda}(2)$-free $4$-indecomposable $5$-graph $\mathcal{H}_2$,
and an $S_{\lambda}(2)$-free $4$-graph $\mathcal{H}_3$.
By Theorem~\ref{THM:FF85} (the stronger version in \cite{FF85}),
$|\mathcal{H}_2| = O\left(n^2\right)$ and $|\mathcal{H}_3| = O\left(n^2\right)$.
By Theorem~\ref{THM:Deza-Erdos-Frankl-L-system}, $\mathcal{H}_1 = O\left(n^2\right)$.
So, it follows from a similar probabilistic argument as above that
$\alpha(\mathcal{H}) \ge \alpha(\mathcal{H}_1\cup \mathcal{H}_2\cup \mathcal{H}_3) = \Omega\left(n^{2/3}\right)$.
\subsection{Pseudorandom bipartite graphs}
Our construction for the upper bound in Theorem~\ref{THM:indep-number-ell-small} is related to
some pseudorandom bipartite graphs, so it will be convenient to
introduce some definitions and results related to pseudorandom bipartite graphs.

For a graph $G$ on $n$ vertices (assuming that $V(G) = [n]$)
the {\em adjacency matrix} $A_{G}$ of $G$ is an $n \times n$ matrix whose $(i,j)$-th entry is
\begin{align}
A_{G}(i,j) =
\begin{cases}
   1, & \mbox{if } \{i,j\}\in E(G), \\
   0, & {\rm otherwise.}
\end{cases} \notag
\end{align}

Denote by $G(V_1,V_2)$ a bipartite graph with two parts $V_1$ and $V_2$,
and that say $G(V_1,V_2)$ is {\em $(d_1,d_2)$-regular} if $d_{G}(v) = d_i$ for all $v \in V_i$ and $i=1,2$.

The following fact about $(d_1,d_2)$-regular bipartite graphs is well-known and easy to prove using the Perron-Frobenius theorem.

\begin{fact}\label{FACT:bipartite-gp-first-eigen}
Let $G = G(V_1,V_2)$ be a $(d_1,d_2)$-regular bipartite graph with $|V_1| = m$ and $|V_2| = n$.
Then $\sqrt{d_1d_2}$ and $-\sqrt{d_1d_2}$ are the largest and smallest eigenvalues of $A_{G}$ respectively,
and the corresponding eigenvectors are
\begin{align}
& \vec{v}^{+}(m,n) = \left(\underbrace{(2m)^{-1/2}, \ldots, (2m)^{-1/2}}_{m \text{ times}},
\underbrace{(2n)^{-1/2}, \ldots,(2n)^{-1/2}}_{n \text{ times}}\right)^{T},
\quad {\rm and} \notag\\
& \vec{v}^{-}(m,n) = \left(\underbrace{(2m)^{-1/2}, \ldots, (2m)^{-1/2}}_{m \text{ times}},
\underbrace{-(2n)^{-1/2}, \ldots,-(2n)^{-1/2}}_{n \text{ times}}\right)^{T}. \notag
\end{align}
\end{fact}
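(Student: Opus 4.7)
The plan is to verify the two eigenpairs directly and then deduce extremality from the symmetry of bipartite spectra together with a Rayleigh-quotient bound (equivalently, Perron--Frobenius), which is the route alluded to in the statement.

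First I would check that $A_G\vec{v}^{+}(m,n) = \sqrt{d_1d_2}\,\vec{v}^{+}(m,n)$ by a one-line computation on each block. For $u \in V_1$ the $u$-th entry of $A_G\vec{v}^{+}$ equals $\sum_{v\sim u}(2n)^{-1/2} = d_1/\sqrt{2n}$, and this matches the target $\sqrt{d_1d_2}\cdot(2m)^{-1/2}$ precisely when $d_1m = d_2n$; but that identity is just the double count of $|E(G)|$ obtained by summing degrees on either side of the bipartition. The check for $u \in V_2$ is symmetric. The vector $\vec{v}^{-}$ is obtained from $\vec{v}^{+}$ by negating every $V_2$-coordinate; since every edge of $G$ has exactly one endpoint in each part, this sign-flip negates the eigenvalue and yields $A_G\vec{v}^{-} = -\sqrt{d_1d_2}\,\vec{v}^{-}$.

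For extremality, write $A_G$ in block form $\bigl(\begin{smallmatrix}0 & B\\ B^{T} & 0\end{smallmatrix}\bigr)$; the map $(y,z)\mapsto(y,-z)$ turns any eigenvector with eigenvalue $\lambda$ into one with eigenvalue $-\lambda$, so $\mathrm{spec}(A_G)$ is symmetric about $0$ and it suffices to show $\rho(A_G) = \sqrt{d_1d_2}$. For any unit vector $x=(y,z)$ with $y$ supported on $V_1$ and $z$ on $V_2$, Cauchy--Schwarz together with $(d_1,d_2)$-regularity gives
\[
x^{T}A_G x = 2\!\!\sum_{\substack{uv\in E\\ u\in V_1,\, v\in V_2}}\!\!y_u z_v \;\le\; 2\sqrt{\sum_{uv\in E}y_u^{2}}\sqrt{\sum_{uv\in E}z_v^{2}} \;=\; 2\sqrt{d_1d_2}\,\|y\|\|z\| \;\le\; \sqrt{d_1d_2},
\]
where the final inequality is AM--GM applied to $\|y\|^{2}$ and $\|z\|^{2}$. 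Hence $\rho(A_G)\le\sqrt{d_1d_2}$, and the eigenvector exhibited in the previous paragraph achieves equality. Combined with the bipartite symmetry of the spectrum, this also identifies $-\sqrt{d_1d_2}$ as the smallest eigenvalue, with corresponding eigenvector $\vec{v}^{-}$.

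There is no real obstacle here; the whole fact collapses to the edge-count identity $d_1m=d_2n$ together with the bipartite sign-flip. The only point worth flagging is that the Rayleigh-quotient bound avoids any connectivity assumption on $G$; if one prefers a pure Perron--Frobenius argument, the same conclusion follows from the observation that $\rho(A_G)$ admits a nonnegative eigenvector $u$, and the symmetric identity $\sqrt{d_1d_2}\langle \vec{v}^{+},u\rangle = \langle A_G\vec{v}^{+},u\rangle = \langle \vec{v}^{+},A_G u\rangle = \rho(A_G)\langle \vec{v}^{+},u\rangle$ forces $\rho(A_G)=\sqrt{d_1d_2}$ because $\vec{v}^{+}$ is strictly positive.
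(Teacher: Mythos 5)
Your proof is correct and complete; the paper states this Fact without proof, merely remarking that it is "easy to prove using the Perron--Frobenius theorem," and both of your routes (the direct verification of the two eigenpairs via the edge count $d_1m=d_2n$ combined with the Rayleigh-quotient/Cauchy--Schwarz bound, and the Perron--Frobenius variant at the end) supply exactly the missing argument. Nothing to correct.
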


For a bipartite $G = G(V_1,V_2)$ denote by $\lambda(G)$ the second largest eigenvalue of $A_{G}$.
Suppose that $G$ is $(d_1,d_2)$-regular.
Then we say $G$ is {\em pseudorandom} if $\lambda(G) = O\left(\max\{\sqrt{d_1},\sqrt{d_2}\}\right)$.

The well known Expander mixing lemma for regular graphs (e.g. see \cite{AC88})
relates the distribution of edges in a graph to spectral properties of its adjacency matrix.
Similar results for regular bipartite graphs were also obtained by several authors (e.g. see \cite{Hae95,KSV13,KMV13}).
Here we show a similar result for $(d_1,d_2)$-regular bipartite graphs, and our proof is basically the same as the
original proof of the Expander mixing lemma.

\begin{lemma}\label{LEMMA:pseudorandom-two-bipartite-inequality}
Let $G = G(V_1,V_2)$ be a $(d_1,d_2)$-regular bipartite graph with $|V_1| = m$ and $|V_2| = n$.
Then for every $X \subset V_1$ and $Y \subset V_2$,
the number $e(X,Y)$ of edges between $X$ and $Y$ satisfies
\begin{align}
\left| e(X,Y) - \frac{d_1}{n}|X||Y| \right| \le \lambda(G) \sqrt{|X||Y|}. \notag
\end{align}
\end{lemma}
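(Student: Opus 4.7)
The plan is to mimic the classical proof of the Expander Mixing Lemma, adapted to handle the two distinct parts $V_1$ and $V_2$ and the two extremal eigenvalues $\pm\sqrt{d_1 d_2}$ identified in Fact~\ref{FACT:bipartite-gp-first-eigen}. Throughout, I would regard vectors as indexed by $V_1 \cup V_2$ with coordinates on $V_1$ first.

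First, I would express $e(X,Y)$ spectrally. Letting $\vec{1}_X$ and $\vec{1}_Y$ denote the indicator vectors of $X \subset V_1$ and $Y \subset V_2$ (extended by zero on the opposite side), the bipartiteness of $G$ and the definition of $A_G$ give
\begin{align}
e(X,Y) = \vec{1}_X^{T} A_G \vec{1}_Y. \notag
\end{align}
Since $A_G$ is real symmetric, fix an orthonormal eigenbasis $\vec{v}_1 = \vec{v}^{+}(m,n)$, $\vec{v}_2 = \vec{v}^{-}(m,n)$, $\vec{v}_3,\ldots,\vec{v}_{m+n}$ with eigenvalues $\sqrt{d_1 d_2}$, $-\sqrt{d_1 d_2}$, $\mu_3,\ldots,\mu_{m+n}$, where $|\mu_i| \le \lambda(G)$ for $i \ge 3$ by definition of $\lambda(G)$.

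Next, I would compute the inner products with the two extremal eigenvectors directly from Fact~\ref{FACT:bipartite-gp-first-eigen}:
\begin{align}
\langle \vec{1}_X, \vec{v}_1 \rangle = \frac{|X|}{\sqrt{2m}},\quad \langle \vec{1}_X, \vec{v}_2 \rangle = \frac{|X|}{\sqrt{2m}},\quad \langle \vec{1}_Y, \vec{v}_1 \rangle = \frac{|Y|}{\sqrt{2n}},\quad \langle \vec{1}_Y, \vec{v}_2 \rangle = -\frac{|Y|}{\sqrt{2n}}. \notag
\end{align}
Substituting these into the spectral expansion $\vec{1}_X^{T} A_G \vec{1}_Y = \sum_i \mu_i \langle \vec{1}_X, \vec{v}_i \rangle \langle \vec{1}_Y, \vec{v}_i \rangle$, the contributions from $\vec{v}_1$ and $\vec{v}_2$ combine to
\begin{align}
\sqrt{d_1 d_2}\cdot \frac{|X||Y|}{2\sqrt{mn}} + (-\sqrt{d_1 d_2}) \cdot \left(-\frac{|X||Y|}{2\sqrt{mn}}\right) = \frac{\sqrt{d_1 d_2}\,|X||Y|}{\sqrt{mn}}. \notag
\end{align}
Using the edge-count identity $m d_1 = n d_2$, one gets $\sqrt{d_1 d_2} = d_1\sqrt{m/n}$, and the main term simplifies to $d_1 |X||Y|/n$, matching the claim.

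Finally, I would bound the tail contribution from $i \ge 3$ by Cauchy--Schwarz. Writing $\vec{1}_X' = \vec{1}_X - \langle \vec{1}_X,\vec{v}_1\rangle\vec{v}_1 - \langle \vec{1}_X,\vec{v}_2\rangle\vec{v}_2$ and analogously for $\vec{1}_Y'$,
\begin{align}
\left| \sum_{i \ge 3} \mu_i \langle \vec{1}_X, \vec{v}_i \rangle \langle \vec{1}_Y, \vec{v}_i \rangle \right| \le \lambda(G)\,\|\vec{1}_X'\|\,\|\vec{1}_Y'\| \le \lambda(G)\sqrt{|X||Y|}, \notag
\end{align}
since orthogonal projection is norm-decreasing and $\|\vec{1}_X\|^2 = |X|$, $\|\vec{1}_Y\|^2 = |Y|$. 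Combining the main term with this error bound yields the stated inequality. There is no real obstacle here; the only point that requires care is the sign asymmetry in $\vec{v}^{-}(m,n)$, which happens to conspire with the eigenvalue $-\sqrt{d_1 d_2}$ so that the two extremal contributions add rather than cancel.
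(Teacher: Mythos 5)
Your proposal is correct and follows essentially the same argument as the paper: expand the indicator vectors in an orthonormal eigenbasis of $A_G$, use Fact~\ref{FACT:bipartite-gp-first-eigen} to show the two extremal eigenpairs contribute exactly $\sqrt{d_1d_2}\,|X||Y|/\sqrt{mn} = d_1|X||Y|/n$ (via $md_1 = nd_2$), and bound the remaining terms by Cauchy--Schwarz. The only cosmetic difference is your indexing of the eigenvectors; the substance is identical.
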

\begin{proof}[Proof of Lemma~\ref{LEMMA:pseudorandom-two-bipartite-inequality}]
Let $\lambda_1 \ge \cdots \ge \lambda_{m+n}$ be the eigenvalues of $A_G$
and let $\vec{v}_1, \ldots, \vec{v}_{m+n}$ be an orthonormal basis of eigenvectors of $A_G$,
where $\vec{v}_i$ is the eigenvector corresponding to $\lambda_i$ for $i\in [m+n]$.
Since $G$ is $(d_1,d_2)$-regular, by Fact~\ref{FACT:bipartite-gp-first-eigen}, $\lambda_1 = -\lambda_{m+n} = \sqrt{d_1d_2}$,
and we may assume that $\vec{v}_1 = \vec{v}^{+}(m,n)$ and $\vec{v}_{m+n} = \vec{v}^{-}(m,n)$.
Let $\chi_X$ and $\chi_Y$ denote the characteristic vector of $X$ and $Y$, and write
$\chi_X = \sum_{i=1}^{m+n}s_i\vec{v}_i$ and $\chi_Y = \sum_{i=1}^{m+n}t_i\vec{v}_i$.
Then
\begin{align}
e(X,Y)
= \langle A_{G} \chi_{X}, \chi_{Y}\rangle
& = \lambda_{1}s_1t_1 + \lambda_{m+n}s_{m+n}t_{m+n} + \sum_{i=2}^{m+n-1}\lambda_{i}s_it_i \notag\\
& = 2 \sqrt{d_1d_2} |X| \sqrt{1/2n} |Y| \sqrt{1/2m}
     + \sum_{i=2}^{m+n-1}\lambda_{i}s_it_i \notag\\
& = \frac{d_1}{n}|X||Y| + \sum_{i=2}^{m+n-1}\lambda_{i}s_it_i. \notag
\end{align}
Here we used the fact that $md_1 = nd_2 = |G|$.

Since
\begin{align}
\left| \sum_{i=2}^{m+n-1}\lambda_{i}s_it_i \right|
\le \lambda(G) \left(\sum_{i=2}^{m+n-1}s_i^2\right)^{1/2}\left(\sum_{i=2}^{m+n-1}t_i^2\right)^{1/2}
\le \lambda(G)\sqrt{|X||Y|}, \notag
\end{align}
we obtain
\begin{align}
\left| e(X,Y) - \frac{d_1}{n}|X||Y| \right| \le \lambda(G)\sqrt{|X||Y|}. \notag
\end{align}
\end{proof}

The {\em Zarankiewicz number} $z(m,n,s,t)$ is the maximum number of edges in a bipartite graph $G(V_1,V_2)$
with $|V_1| = m$, $|V_2| = n$ such that $G$ contains no complete bipartite graph with $s$ vertices in $V_1$ and $t$ vertices in $V_2$.

Our construction of $(n,k,\ell)$-systems is related to the lower bound (construction) for $z(m,n,s,t)$.
More specifically, it is related to a construction defined by
Alon, Mellinger, Mubayi and Verstra\"{e}te in \cite{AMMV12},
which was used to show that $z(n^{\ell/2},n,2,\ell) = \Omega\left(n^{(\ell+1)/2}\right)$.

Let $q$ be a prime power and $\mathbb{F} = GF(q)$ be the finite field of size $q$.
Denote by $\mathbb{F}[X]$ the collection of all polynomials over $\mathbb{F}$.
The graph $G(q^{\ell},q^{2},2,\ell)$ is a bipartite graph with two parts $V_1$ and $V_2$,
where
\begin{align}
V_1 = \left\{P(x): P(x)\in \mathbb{F}[X], \deg(P(x)) \le \ell-1 \right\},
\quad{\rm and}\quad
V_2 = \mathbb{F} \times \mathbb{F}, \notag
\end{align}
and for every $P(x) \in V_1$ and every $(x,y) \in V_2$, the pair $\{P(x),(x,y)\}$ is an edge in $G(q^{\ell},q^{2},2,\ell)$ iff $y = P(x)$.

It is clear that $G(q^{\ell},q^{2},2,\ell)$ does not contain a complete bipartite graph with two vertices in $V_1$ and $\ell$ vertices in $V_2$
since two distinct polynomials of degree at most $\ell-1$ over $\mathbb{F}$ can have the same value in at most $\ell-1$ points.
It is also easy to see that $G(q^{\ell},q^{2},2,\ell)$ is $(q, q^{\ell-1})$-regular.

The proof of the following result concerning the eigenvalues of $G(q^{\ell},q^{2},2,\ell)$ can be found in \cite{EV13}.

\begin{lemma}[\cite{EV13}]\label{LEMMA:Zarankiewicz-construction-eigenvalues}
The eigenvalues of the adjacency matrix of $G(q^{\ell},q^{2},2,\ell)$ are
\begin{align}
q^{\ell/2}, \underbrace{q^{(\ell-1)/2}, \ldots, q^{(\ell-1)/2}}_{q^2-q \text{ times}},
0, \ldots, 0, \underbrace{-q^{(\ell-1)/2}, \ldots, -q^{(\ell-1)/2}}_{q^2-q \text{ times}},-q^{\ell/2}. \notag
\end{align}
In particular, $G(q^{\ell},q^{2},2,\ell)$ is pseudorandom.
\end{lemma}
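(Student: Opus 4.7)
My plan is to compute the spectrum of the bipartite adjacency matrix
\[
A=\begin{pmatrix} 0 & B \\ B^{T} & 0 \end{pmatrix},
\]
where $B$ is the $q^{\ell}\times q^{2}$ incidence matrix with $B(P,(x,y))=1$ iff $P(x)=y$, by passing to the $q^{2}\times q^{2}$ matrix $B^{T}B$. The nonzero eigenvalues of $A$ come in $\pm$ pairs whose absolute values are the nonzero singular values of $B$: each nonzero eigenvalue $\mu$ of $B^{T}B$ contributes eigenvalues $\pm\sqrt{\mu}$ of $A$ with the same multiplicity, and all remaining eigenvalues of $A$ vanish. So it suffices to diagonalize $B^{T}B$, and a direct count gives its entries: for $(x,y),(x',y')\in \mathbb{F}\times \mathbb{F}$, the entry $(B^{T}B)((x,y),(x',y'))$ equals the number of $P\in \mathbb{F}[X]$ of degree at most $\ell-1$ with $P(x)=y$ and $P(x')=y'$. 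This equals $q^{\ell-1}$ when $(x,y)=(x',y')$; equals $0$ when $x=x'$ and $y\neq y'$ (incompatible constraints); and equals $q^{\ell-2}$ when $x\neq x'$, since Lagrange interpolation leaves $\ell-2$ free coefficients.

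Next I would exploit the $\mathbb{F}\times \mathbb{F}$ product structure of $V_{2}$ by decomposing $\mathbb{C}^{V_{2}}$ into three mutually orthogonal subspaces: the constants (dimension $1$); functions depending only on $x$ with zero total sum (dimension $q-1$); and fiberwise--mean--zero functions, i.e.\ vectors $w$ with $\sum_{y}w(x,y)=0$ for every $x\in \mathbb{F}$ (dimension $q(q-1)$). Using the three cases for the entries of $B^{T}B$, a short computation confirms each subspace is an eigenspace, with eigenvalues $q^{\ell}$, $0$, and $q^{\ell-1}$ respectively. Taking square roots and pairing $\pm$ for $A$, and accounting for the $q^{\ell}+q^{2}-2(q^{2}-q+1)$ further zero eigenvalues arising from the left nullspace of $B$, yields exactly the spectrum claimed in the lemma. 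Pseudorandomness is then immediate, since the second-largest eigenvalue of $A$ is $q^{(\ell-1)/2}=O(\sqrt{\max\{d_{1},d_{2}\}})$ where $(d_{1},d_{2})=(q,q^{\ell-1})$.

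The substantive step is identifying the correct eigenspace decomposition; once the natural product structure on $V_{2}$ is used, the eigenvalue verifications are a few lines of arithmetic. An alternative route would be additive Fourier analysis, viewing $BB^{T}$ as convolution on the additive group $V_{1}\cong \mathbb{F}^{\ell}$ and determining for which characters the Fourier coefficient is nonzero; this also works but is more delicate, since one must identify precisely the characters whose associated linear functional factors through some evaluation map $R\mapsto R(x)$.
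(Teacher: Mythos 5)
Your proposal is correct and complete; note that the paper itself offers no proof of this lemma, only the citation to \cite{EV13}, so your self-contained argument stands on its own. The reduction to $B^{T}B$ is the standard and right move: the entries are exactly as you state ($q^{\ell-1}$ on the diagonal, $0$ for distinct points in the same fiber, $q^{\ell-2}$ for points in distinct fibers, the last by surjectivity of evaluation at two distinct points on the $\ell$-dimensional space of polynomials of degree at most $\ell-1$), and your three subspaces are indeed orthogonal eigenspaces: writing $S(x')=\sum_{y'}w(x',y')$ one has $(B^{T}Bw)(x,y)=q^{\ell-1}w(x,y)+q^{\ell-2}\sum_{x'\neq x}S(x')$, which gives eigenvalue $q^{\ell}$ on constants, $0$ on sum-zero functions of $x$ alone, and $q^{\ell-1}$ on the $q(q-1)$-dimensional fiberwise-mean-zero space, matching the claimed multiplicities $1$ and $q^{2}-q$ after taking square roots and $\pm$ pairs. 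The only blemish is cosmetic: the leftover zero eigenvalues of $A$ do not all come from the left nullspace of $B$ (which has dimension $q^{\ell}-(q^{2}-q+1)$); another $q-1$ of them come from the kernel of $B$ inside $\mathbb{C}^{V_{2}}$, which is precisely your second subspace. Your total count $q^{\ell}+q^{2}-2(q^{2}-q+1)$ is nonetheless correct, and the pseudorandomness conclusion $\lambda(G)=q^{(\ell-1)/2}=\sqrt{d_{2}}$ follows as you say.
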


\subsection{Upper bound}
In this section we prove the existence of $(n,k,\ell)$-systems with independence number
$O\left(n^{\frac{\ell+1}{2\ell}}(\log n)^{\frac{1}{\ell}}\right)$.
Our construction is obtained from a random subgraph of
the bipartite graph $G(q^{\ell},q^{2},2,\ell)$ defined in the last section,
and the method we used here is similar to that used in \cite{KMV13,EV13}.

First let us summarize the constructions used in \cite{KMV13} and \cite{EV13} into a more general form.

Since we cannot ensure the random subgraph chosen from $G(q^{\ell},q^{2},2,\ell)$
is exactly $(d_1,d_2)$-regular for some $d_1,d_2\in \mathbb{N}$,
it will be useful to consider the following more general setting.

Let $C,d_1,d_2 \ge 1$ be real numbers.
A hypergraph $\mathcal{H}$ is
\begin{itemize}
\item[(a)] {\em $(C,d_1)$-uniform} if $d_1/C \le |E| \le C d_1$ for all $E\in \mathcal{H}$, and
\item[(b)] {\em $(C,d_2)$-regular} if $d_2/C \le d_{\mathcal{H}}(v) \le Cd_2$ for all $v \in V(\mathcal{H})$.
\end{itemize}

The {\em edge density} of a $k$-graph $\mathcal{H}$ with $n$ vertices is $\rho(\mathcal{H}) = |\mathcal{H}|/\binom{n}{k}$.
The {\em bipartite incidence graph} $G_{\mathcal{H}}$ of $\mathcal{H}$ is a bipartite graph
with two parts $V_1 = E(\mathcal{H})$ and $V_2 = V(\mathcal{H})$,
and for every $E\in E(\mathcal{H})$ and $v\in V(\mathcal{H})$
the pair $\{E,v\}$ is an edge in $G_{\mathcal{H}}$ iff $v\in E$.
Denote by $A_{\mathcal{H}}$ the adjacency matrix of $G_{\mathcal{H}}$.

Let $n = |V(\mathcal{H})|$, $m = |\mathcal{H}|$ and
labelling the edges in $\mathcal{H}$ with $E_1,\ldots,E_{m}$
We say a family $\mathcal{F}$ of hypergraphs
{\em fits} $\mathcal{H}$ if $\mathcal{F} = \{\mathcal{G}_i: 1\le i \le m\}$ and
$\mathcal{G}_i$ is a hypergraph with $|V(\mathcal{G}_i)| = |E_i|$ for $i\in[m]$.

Given a hypergraph $\mathcal{H}$ and a family $\mathcal{F}$ that fits $\mathcal{H}$
we let $\mathcal{H}(\mathcal{F})$ be the hypergraph obtained from $\mathcal{H}$ by taking
independently for every $i\in [m]$ a bijection $\psi_{i}: E_i \to V(\mathcal{G}_i)$ and letting
a set $S\subset E_i$ to be an edge in $\mathcal{H}(\mathcal{F})$ if $\psi_{i}(S) \in \mathcal{G}_i$.

Let $\tau \ge 1$ be an integer and denote by $B_{\tau}(\mathcal{G})$ the collection of
$\tau$-subsets of $V(\mathcal{G})$ that are not independent in $\mathcal{G}$.
Let $b_{\tau}(\mathcal{G}) = |B_{\tau}(\mathcal{G})|$ and
$p_{\tau}(\mathcal{G}) = b_{\tau}(\mathcal{G})/\binom{v(\mathcal{G})}{\tau}$.
In other words, $p_{\tau}(\mathcal{G})$ is the probability that a random $\tau$-subset of $V(\mathcal{G})$
is not independent in $\mathcal{G}$.
For a family $\mathcal{F}$ of hypergraphs define
\begin{align}
p_{\tau}(\mathcal{F}) = \min\left\{p_{\tau}(\mathcal{G}): \mathcal{G}\in \mathcal{F}\right\}. \notag
\end{align}

We extend the definition of $C_{\mathcal{G}}(2,j)$ in Section~\ref{SEC:omit-system-ell-big} by letting
$C_{\mathcal{G}}(2,j)$ denote the number of pairs $\{E,E'\}$ in a $k$-graph $\mathcal{G}$ with $|E\cap E'| = j$
for all $0\le j \le k-1$.

The following lemma gives an upper bound for the independence number of $\mathcal{H}(\mathcal{F})$.

\begin{lemma}\label{LEMMA:indep-algebraic-random-constrction}
Let $C,d_1,d_2 \ge 1$ be real numbers and $k\ge 2$ be an integer.
Suppose that $\mathcal{H}$ is a hypergraph with $n$ vertices, $m$ edges, and is $(C,d_1)$-uniform, $(C,d_2)$-regular.
Let $\mathcal{F} = \{\mathcal{G}_i: i\in[m]\}$ be a family of $k$-graphs that fits $\mathcal{H}$.
Suppose there exists $\lambda\ge 0$ such that the bipartite graph $G_{\mathcal{H}}$ satisfies
\begin{align}\label{equ:pseudornadom-bipartite-graph}
\left| e_{G_{\mathcal{H}}}(X,Y) - \frac{d_1}{n}|X||Y| \right| \le \lambda \sqrt{|X||Y|}
\end{align}
for all $X \subset V(\mathcal{H})$ and $Y \subset E(\mathcal{H})$.
Then, w.h.p. $\alpha\left(H(\mathcal{F})\right)\le 2\tau n/d_1$,
if $\tau$ satisfies
\begin{align}\label{equ:tau-conditions}
\frac{p_{\tau}(\mathcal{F})}{\tau}
\ge \frac{8C^2\log n}{d_2} \quad{\rm and}\quad \tau \ge \frac{8C^2\lambda^2}{d_2}.
\end{align}
\end{lemma}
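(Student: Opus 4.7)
The plan is to show that for any fixed $I \subset V(\mathcal{H})$ of size $s := \lceil 2\tau n/d_1 \rceil$, the probability that $I$ is independent in $\mathcal{H}(\mathcal{F})$ is so small that a union bound over all $\binom{n}{s}$ candidates still succeeds. The intuition is that any such $I$ must, by the pseudorandom hypothesis, intersect almost every edge $E_i$ of $\mathcal{H}$ in at least $\tau$ vertices, and each such intersection then independently witnesses an edge of $\mathcal{G}_i$ inside $I$ with probability at least $p_\tau(\mathcal{F})$.

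First I would count the ``heavy'' edges $N(I) = \{E_i : |E_i \cap I| \ge \tau\}$. Let $Y$ denote the complementary set of ``light'' edges; trivially $e_{G_\mathcal{H}}(I,Y) < \tau |Y|$, while the pseudorandom hypothesis (\ref{equ:pseudornadom-bipartite-graph}) gives $e_{G_\mathcal{H}}(I,Y) \ge d_1 |I| |Y|/n - \lambda\sqrt{|I||Y|} = 2\tau|Y| - \lambda\sqrt{|I||Y|}$. Combining these forces $|Y| \le 2\lambda^2 n/(\tau d_1)$, and the hypothesis $\tau \ge 8C^2\lambda^2/d_2$ together with the elementary bound $m \ge nd_2/(C^2 d_1)$ (which follows from $(C,d_1)$-uniformity and $(C,d_2)$-regularity by comparing $\sum_v d_\mathcal{H}(v)$ with $\sum_i |E_i|$) upgrade this to $|Y| \le m/4$, so $|N(I)| \ge 3m/4 \ge 3nd_2/(4C^2 d_1)$.

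Next I would bound the probability that $I$ is independent. For each $i \in N(I)$, since $\psi_i$ is a uniformly random bijection, $\psi_i(I \cap E_i)$ is a uniform random subset of $V(\mathcal{G}_i)$ of size $\ge \tau$; picking a uniform $\tau$-subset inside it gives a uniform $\tau$-subset of $V(\mathcal{G}_i)$, which lies in $B_\tau(\mathcal{G}_i)$ with probability $p_\tau(\mathcal{G}_i) \ge p_\tau(\mathcal{F})$, and in that case some edge of $\mathcal{G}_i$ is already contained in $\psi_i(I \cap E_i)$, producing an edge of $\mathcal{H}(\mathcal{F})$ inside $I$. Because the $\psi_i$ are independent across $i$, this yields
\[
\Pr\bigl[\,I\text{ independent in }\mathcal{H}(\mathcal{F})\,\bigr] \le (1-p_\tau(\mathcal{F}))^{|N(I)|} \le \exp\left(-\frac{3\, p_\tau(\mathcal{F})\, n\, d_2}{4 C^2 d_1}\right).
\]
A union bound over $\binom{n}{s} \le \exp(s \log n) = \exp(2\tau n \log n/d_1)$ choices of $I$, combined with the hypothesis $p_\tau(\mathcal{F})/\tau \ge 8C^2 \log n/d_2$, makes the exponent of the per-$I$ probability dominate the union-bound exponent by a factor of $3$, so w.h.p.\ no set of size $s$ is independent.

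The main obstacle is Step~1: the pseudorandom estimate has to be applied in the correct direction so that the fluctuation term $\lambda\sqrt{|I||Y|}$ is absorbed by the gap between the $2\tau|Y|$ expected incidences and the $\tau|Y|$ upper bound coming from lightness, and this is precisely where the quantitative hypothesis $\tau \ge 8C^2 \lambda^2/d_2$ is used. Once Step~1 is in hand, the remainder is a clean independence-across-edges argument, with the hypothesis $p_\tau(\mathcal{F})/\tau \ge 8C^2 \log n/d_2$ calibrated exactly for the final union-bound comparison.
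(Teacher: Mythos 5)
Your proof is correct and follows essentially the same route as the paper: fix a candidate set $I$ of size $\lceil 2\tau n/d_1\rceil$, use the spectral inequality to show that all but a $1/4$-fraction of edges meet $I$ in at least $\tau$ vertices, use the independence of the $\psi_i$ together with the definition of $p_\tau(\mathcal{F})$ to bound the probability that $I$ is independent, and finish with a union bound calibrated by $p_\tau(\mathcal{F})/\tau \ge 8C^2\log n/d_2$. The one difference is that the paper additionally discards the edges with $|E_i\cap I| > 3\tau$ (the set $\mathcal{E}_2$ in its Claim), whereas you keep them; your version is valid, since the monotone coupling you describe (a uniform $\tau$-subset inside a uniform $s$-subset is itself uniform) shows $\Pr[\psi_i(I\cap E_i)\text{ independent}]\le 1-p_\tau(\mathcal{G}_i)$ for every $|I\cap E_i|\ge\tau$, so the upper truncation is superfluous and your count of good edges is $3m/4$ rather than the paper's $m/2$, which only improves the constants.
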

\begin{proof}[Proof of Lemma \ref{LEMMA:indep-algebraic-random-constrction}]
Let $\tau$ be a real number that satisfies $(\ref{equ:tau-conditions})$,
$V = V(\mathcal{H})$, and $I \subset V$ be a set of size $\lceil 2\tau n/d_1 \rceil$
(we may assume that $2\tau n/d_1 \in \mathbb{N}$ to keep the calculations simple).

Let $m = |\mathcal{H}|$ and label the edges in $\mathcal{H}$ by $\{E_1,\ldots,E_{m}\}$.
Let $m_i = |E_i|$ for $i\in[m]$.
Since $\mathcal{H}$ is $(C,d_1)$-uniform and $(C,d_2)$-regular, we obtain
$d_1|\mathcal{H}|/C \le \sum_{v\in V}d_{\mathcal{H}}(v) \le Cd_1|\mathcal{H}|$,
and
\begin{align}\label{equ:vertex-edge-inequality}
md_1/C^2 \le nd_2 \le C^2md_1.
\end{align}

Define
\begin{align}
\mathcal{E}_1  = \left\{E\in \mathcal{H}:  |E\cap I| < \tau \right\},
\quad{\rm and}\quad
\mathcal{E}_2  = \left\{E\in \mathcal{H}:  |E\cap I| >3 \tau \right\}. \notag
\end{align}

\begin{claim}\label{CLAIM:size-E1-E2}
$|\mathcal{E}_i| \le {2C^2\lambda^2 m}/{d_2\tau} \le m/4$ for $i = 1,2$.
\end{claim}
\begin{proof}[Proof of Claim \ref{CLAIM:size-E1-E2}]
It follows from $(\ref{equ:pseudornadom-bipartite-graph})$ that
\begin{align}
\sum_{E\in \mathcal{E}_1}|E\cap I|
= e_{G_{\mathcal{H}}}(I,\mathcal{E}_1)
\ge d_1|I||\mathcal{E}_1|/n - \lambda\left(|I||\mathcal{E}_1|\right)^{1/2}, \notag
\end{align}
and by definition, $\sum_{E\in \mathcal{E}_1}|E\cap I| < \tau|\mathcal{E}_1|$.
Therefore,
\begin{align}
\tau|\mathcal{E}_1| > d_1|I||\mathcal{E}_1|/n - \lambda\left(|I||\mathcal{E}_1|\right)^{1/2}. \notag
\end{align}
Since $|I| = 2\tau n/d_1$, we obtain
\begin{align}
|\mathcal{E}_1|
< \left(\frac{\lambda|I|^{1/2}}{d_1|I|/n-\tau}\right)^2
= \left(\frac{\lambda|I|^{1/2}}{d_1|I|/2n}\right)^2
= \frac{2\lambda^2 n}{\tau d_1},\notag
\end{align}
which together with $(\ref{equ:vertex-edge-inequality})$ implies
$|\mathcal{E}_1| < {2C^2\lambda^2 m}/{d_2\tau}$.
Notice that $(\ref{equ:tau-conditions})$ implies that $C^2\lambda^2 /d_2\tau \le 1/8$,
so $|\mathcal{E}_1| < m/4$.

Now consider $\mathcal{E}_2$.
Similarly, By $(\ref{equ:pseudornadom-bipartite-graph})$,
\begin{align}
\sum_{E\in \mathcal{E}_2}|E\cap I|
= e_{G_{\mathcal{H}}}(I,\mathcal{E}_2)
\le d_1|I||\mathcal{E}_1|/n + \lambda\left(|I||\mathcal{E}_1|\right)^{1/2}, \notag
\end{align}
and by definition, $\sum_{E\in \mathcal{E}_2}|E\cap I| > 3\tau|\mathcal{E}_2|$.
Therefore,
\begin{align}
3\tau|\mathcal{E}_2| < d_1|I||\mathcal{E}_2|/n + \lambda\left(|I||\mathcal{E}_2|\right)^{1/2}, \notag
\end{align}
Since $|I| = 2\tau n/d_1$, we obtain
\begin{align}
|\mathcal{E}_2|
< \left(\frac{\lambda|I|^{1/2}}{3\tau - d_1 |I|/n}\right)^2
= \left(\frac{\lambda|I|^{1/2}}{d_1|I|/2n}\right)^2
= \frac{2\lambda^2 n}{\tau d_1}
\le \frac{2C^2\lambda^2 m}{\tau d_2}
\le \frac{m}{4}.\notag
\end{align}
\end{proof}

For $i\in[m]$ let $I_i = I\cap E_i$.
By Claim \ref{CLAIM:size-E1-E2} the number of set $I_i$ that satisfies $\tau \le |I_i| \le 3\tau$
is at least $m - 2 m/4 = m/2$.
By the definition of $p_{\tau}(\mathcal{F})$, for every $I_i$ that satisfies $\tau \le |I_i| \le 3\tau$ we have
\begin{align}
P\left(I_i \text{ is independent in }\mathcal{H}(\mathcal{F})\right)
\le P\left(\psi_{i}(I_i)\text{ is independent in }\mathcal{G}_i\right)
\le 1- p_{\tau}(\mathcal{F}). \notag
\end{align}
So, the probability that $I$ is independent is at most $\left(1- p_{\tau}(\mathcal{F})\right)^{m/2}$ and hence
the expected number of independent $2n\tau/d_1$-sets in $\mathcal{H}(\mathcal{G})$ is at most
\begin{align}
\left(1- p_{\tau}(\mathcal{F})\right)^{m/2} \binom{n}{2n\tau/d_1}
& < \exp\left(-p_{\tau}(\mathcal{F})\frac{m}{2} + \frac{2\tau n}{d_1}\log\left(\frac{en}{2n\tau/d_1}\right)\right) \notag\\
& < \exp\left(-p_{\tau}(\mathcal{F})\frac{m}{2} + \frac{2C^2\tau m}{d_2}\log n\right) \notag\\
& < \exp\left(-p_{\tau}(\mathcal{F})\frac{m}{4}\right)
\to 0 \quad{\rm as}\quad m \to \infty. \notag
\end{align}
Therefore, $\alpha(\mathcal{H}(\mathcal{F})) \le 2\tau n/d_1$ holds with high probability.
\end{proof}

The following corollary may be a simpler form to use Lemma~\ref{LEMMA:indep-algebraic-random-constrction},
and its proof can be obtained easily using the Inclusion-exclusion principle.

\begin{corollary}\label{CORO:indep-algebraic-random-constrction-few-cycles}
Let $\mathcal{H}$ and $\mathcal{F}$ be the same as in Lemma~\ref{LEMMA:indep-algebraic-random-constrction}.
If there exists $\rho > 0$ such that $\rho(\mathcal{G}_i) \ge \rho$ for $i\in[m]$,
$\lambda < (d_2\tau_1)^{1/2}$ with $\tau_1 = \left({\log n}/{\rho d_2}\right)^{1/(k-1)}$,
and $C_{\mathcal{G}_i}(2,j) \le |\mathcal{G}_i|\left(v(\mathcal{G}_i)/\tau_1\right)^{k-j}$ for $0 \le j \le k-1$ and $i \in [m]$,
then, w.h.p.
$\alpha\left(\mathcal{H}(\mathcal{F})\right) = O\left(\tau_1 n/d_1\right)$.
\end{corollary}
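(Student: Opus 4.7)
The plan is to deduce this corollary from Lemma~\ref{LEMMA:indep-algebraic-random-constrction}, applied with $\tau$ equal to a constant multiple of $\tau_1$. Since the lemma's conclusion $\alpha(\mathcal{H}(\mathcal{F})) \le 2\tau n/d_1$ already matches the desired $O(\tau_1 n/d_1)$ bound for any such $\tau$, it suffices to verify the two hypotheses of that lemma. The hypothesis $\tau \ge 8C^2\lambda^2/d_2$ is immediate: rearranging $\lambda < (d_2\tau_1)^{1/2}$ gives $\lambda^2/d_2 < \tau_1$, so any $\tau$ that is at least $8C^2\tau_1$ works.

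The main work is verifying $p_\tau(\mathcal{F})/\tau \ge 8C^2\log n/d_2$, and this is where both the density and codegree hypotheses enter through an inclusion-exclusion argument. Fix $\mathcal{G}_i \in \mathcal{F}$, write $v_i = v(\mathcal{G}_i)$, and let $X$ count the edges of $\mathcal{G}_i$ contained in a uniformly random $\tau$-subset $S \subseteq V(\mathcal{G}_i)$, so that $p_\tau(\mathcal{G}_i) = \Pr[X \ge 1]$. The Bonferroni (second-order inclusion-exclusion) lower bound is
\begin{equation*}
p_\tau(\mathcal{G}_i) \;\ge\; \mathbb{E}[X] \;-\; \tfrac{1}{2}\,\mathbb{E}[X(X-1)].
\end{equation*}
The first moment equals $|\mathcal{G}_i|\binom{v_i-k}{\tau-k}/\binom{v_i}{\tau}$, which to leading order is $|\mathcal{G}_i|(\tau/v_i)^k = \Omega(\rho\tau^k)$ by the density hypothesis $\rho(\mathcal{G}_i)\ge\rho$. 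The pair term decomposes by intersection size:
\begin{equation*}
\mathbb{E}[X(X-1)] \;=\; 2\sum_{j=0}^{k-1} C_{\mathcal{G}_i}(2,j)\,\frac{\binom{v_i-(2k-j)}{\tau-(2k-j)}}{\binom{v_i}{\tau}},
\end{equation*}
and the codegree hypothesis $C_{\mathcal{G}_i}(2,j) \le |\mathcal{G}_i|(v_i/\tau_1)^{k-j}$, together with the identity $(v_i/\tau_1)^{k-j}(\tau/v_i)^{2k-j} = (\tau/v_i)^k(\tau/\tau_1)^{k-j}$, identifies the $j$-th summand, to leading order, as $\mathbb{E}[X]\cdot(\tau/\tau_1)^{k-j}$. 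Choosing $\tau = c\tau_1$ for a suitable constant $c$ depending only on $k$ and $C$ therefore keeps the geometric sum $\sum_{l=1}^{k}(\tau/\tau_1)^l$ controlled, so that the pair correction is absorbed into a constant fraction of the first moment, leaving $p_\tau(\mathcal{G}_i) = \Omega(\rho\tau^k)$ uniformly in $i$. Taking minima transfers the bound to $p_\tau(\mathcal{F})$, and dividing by $\tau = \Theta(\tau_1)$ gives
\begin{equation*}
\frac{p_\tau(\mathcal{F})}{\tau} \;=\; \Omega\!\left(\rho\tau^{k-1}\right) \;=\; \Omega\!\left(\frac{\log n}{d_2}\right),
\end{equation*}
upon recalling $\rho\tau_1^{k-1} = \log n/d_2$, with the implicit constant controllable to exceed $8C^2$.

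The main obstacle is calibrating the single scaling constant $c$ in $\tau = c\tau_1$ to satisfy both requirements simultaneously: $c$ must be large enough both to dominate $8C^2$ in the $\lambda$-constraint and to drive the implicit constant in the bound on $p_\tau/\tau$ above $8C^2$, yet not so large that the Bonferroni pair correction swamps the first moment. Because the pair-to-main ratio is a $k$-term geometric series in $\tau/\tau_1$ depending only on $k$, this balancing reduces to a finite, $k$- and $C$-dependent constant optimization; once it is carried out, Lemma~\ref{LEMMA:indep-algebraic-random-constrction} delivers $\alpha(\mathcal{H}(\mathcal{F})) \le 2\tau n/d_1 = O(\tau_1 n/d_1)$ with high probability.
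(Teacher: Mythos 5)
Your overall strategy — invoke Lemma~\ref{LEMMA:indep-algebraic-random-constrction} with $\tau$ a constant multiple of $\tau_1$, and verify the hypothesis on $p_\tau(\mathcal{F})$ via a second-order Bonferroni (inclusion-exclusion) bound — is exactly what the paper has in mind when it says the proof ``can be obtained easily using the Inclusion-exclusion principle.'' The issue is the final constant calibration, which you wave off as a ``finite, $k$- and $C$-dependent constant optimization'' but never actually carry out; when one does, it fails to close.

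Set $c = \tau/\tau_1$. Your Bonferroni bookkeeping (which is correct) gives
\[
\frac{1}{2}\,\mathbb{E}[X(X-1)] \;\le\; \mathbb{E}[X]\sum_{l=1}^{k} c^{\,l},
\]
so the lower bound $p_\tau(\mathcal{G}_i) \ge \mathbb{E}[X]\bigl(1 - \sum_{l=1}^{k}c^{\,l}\bigr)$ is only positive when $\sum_{l=1}^{k} c^{\,l} < 1$, forcing $c < 1/2$. But with $\mathbb{E}[X] = \rho(\mathcal{G}_i)\binom{\tau}{k} \approx \rho\,\tau^{k}/k!$ and $\rho\,\tau_1^{k-1} = \log n / d_2$, the first hypothesis of the lemma requires
\[
\frac{p_\tau(\mathcal{F})}{\tau} \;\lesssim\; \frac{\rho\,\tau^{k-1}}{k!} \;=\; \frac{c^{k-1}}{k!}\cdot\frac{\log n}{d_2} \;\ge\; \frac{8C^2\log n}{d_2},
\]
i.e.\ $c^{k-1} \gtrsim 8C^2 k!$, which forces $c > 1$. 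These two requirements on $c$ are incompatible. The second lemma condition makes things even worse: you yourself observe that the hypothesis $\lambda < (d_2\tau_1)^{1/2}$ only guarantees $8C^2\lambda^2/d_2 < 8C^2\tau_1$, so in the worst case the $\lambda$-constraint alone already forces $c \approx 8C^2 \ge 8$, well outside the Bonferroni window $c < 1/2$.

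So there is a genuine gap: no single scaling constant $c$ satisfies all three requirements, and the proof as written cannot be completed. To be fair, the paper itself supplies no proof of this corollary, and the culprit is most likely that the statement is imprecise about constants — for the inclusion-exclusion route to work, either $\tau_1$ must be defined with an extra $k$- and $C$-dependent constant factor baked in, or the codegree hypothesis $C_{\mathcal{G}_i}(2,j) \le |\mathcal{G}_i|(v(\mathcal{G}_i)/\tau_1)^{k-j}$ must carry a small multiplicative constant on the right-hand side, so that the pair correction stays a small fraction of $\mathbb{E}[X]$ even at $\tau = c\tau_1$ with $c$ large enough to meet the lemma's thresholds. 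Your proposal should not assert that the calibration closes as stated; it should instead point out that the hypotheses require exactly such a constant adjustment and carry out the optimization under the corrected hypotheses.
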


Lemma~\ref{LEMMA:pseudorandom-two-bipartite-inequality} applied to
the bipartite incidence graph $G_{\mathcal{H}}$ of $\mathcal{H}$ gives the following result.

\begin{lemma}[\cite{KMV13}]\label{LEMMA:intersection-inequality-hypergraph}
Let $\mathcal{H}$ be a $d_1$-uniform $d_2$-regular hypergraph on $n$ vertices.
Then for every $V' \subset V(\mathcal{H})$ and $\mathcal{E} \subset E(\mathcal{H})$,
\begin{align}
\left| \sum_{E\in \mathcal{E}}|E\cap V'| -  \frac{d_1}{n}|V'||\mathcal{E}|\right| \le \lambda(G_{\mathcal{H}}) \sqrt{|V'||\mathcal{E}|}.\notag
\end{align}
\end{lemma}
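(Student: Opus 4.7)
The plan is to derive Lemma~\ref{LEMMA:intersection-inequality-hypergraph} as a direct corollary of Lemma~\ref{LEMMA:pseudorandom-two-bipartite-inequality} applied to the bipartite incidence graph $G_{\mathcal{H}}$. The first step is a pure bookkeeping observation: by definition of $G_{\mathcal{H}}$, the pair $(E,v)$ with $v \in E \cap V'$ contributes exactly one edge to $G_{\mathcal{H}}$ joining $\mathcal{E}$ to $V'$, so
$$\sum_{E\in \mathcal{E}}|E\cap V'| \;=\; e_{G_{\mathcal{H}}}(V',\mathcal{E}).$$

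Next I would verify that $G_{\mathcal{H}}$ satisfies the hypotheses of Lemma~\ref{LEMMA:pseudorandom-two-bipartite-inequality} with parts $V_1 = E(\mathcal{H})$ of size $m = |\mathcal{H}|$ and $V_2 = V(\mathcal{H})$ of size $n$. Because $\mathcal{H}$ is $d_1$-uniform, every vertex of $V_1$ has degree exactly $d_1$ in $G_{\mathcal{H}}$ (equal to the number of vertices the corresponding edge of $\mathcal{H}$ contains); because $\mathcal{H}$ is $d_2$-regular, every vertex of $V_2$ has degree exactly $d_2$ in $G_{\mathcal{H}}$. Hence $G_{\mathcal{H}}$ is $(d_1,d_2)$-regular in the sense required, and in particular $m d_1 = n d_2$.

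Now I would invoke Lemma~\ref{LEMMA:pseudorandom-two-bipartite-inequality} with $X = \mathcal{E} \subseteq V_1$ and $Y = V' \subseteq V_2$. Paying attention to the orientation, the density term in that lemma is $d_1/n$ (the ratio of the $V_1$-degree to $|V_2|$), which matches the normalization appearing in the target inequality. Substituting the identity from the first step yields
$$\left| \sum_{E\in \mathcal{E}}|E\cap V'| - \frac{d_1}{n}|V'||\mathcal{E}|\right| \;\le\; \lambda(G_{\mathcal{H}})\sqrt{|V'||\mathcal{E}|},$$
which is exactly what is claimed. There is no substantive obstacle: the only care required is to align the roles of $V_1,V_2$ correctly so that the density factor comes out as $d_1/n$ rather than $d_2/m$, and to remember that the two expressions coincide thanks to the regularity identity $m d_1 = n d_2$.
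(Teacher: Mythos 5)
Your proposal is correct and takes the same route the paper indicates: the paper proves this lemma simply by citing Lemma~\ref{LEMMA:pseudorandom-two-bipartite-inequality} applied to the bipartite incidence graph $G_{\mathcal{H}}$, and you have filled in exactly the bookkeeping that makes that application work (the identity $\sum_{E\in\mathcal{E}}|E\cap V'| = e_{G_{\mathcal{H}}}(\mathcal{E},V')$, the $(d_1,d_2)$-regularity of $G_{\mathcal{H}}$, and the correct orientation of $V_1,V_2$ so that the density term is $d_1/n$).
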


We also need the following Chernoff's inequality (e.g. see Theorem 22.6 in \cite{FK16}).

\begin{theorem}[Chernoff's inequality]
Suppose that $S_n = X_1 + \cdots +X_n$ where $0\le X_i \le 1$ for $i\in[n]$ are independent random variables.
Let $\mu = \mathbb{E}[X_1]+\cdots +\mathbb{E}[X_n]$.
Then for every $0\le t \le \mu$,
\begin{align}
P\left(|S_n-\mu|\ge t\right) \le e^{-\frac{t^2}{3\mu}}. \notag
\end{align}
\end{theorem}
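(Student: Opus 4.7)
The statement is the classical Chernoff bound, and the plan is the textbook Chernoff--Cram\'er argument via moment generating functions. First, for the upper tail I would fix a parameter $\theta>0$ (to be optimized later) and apply Markov's inequality to the nonnegative random variable $e^{\theta S_n}$, then use independence to factor the MGF:
\begin{align}
P(S_n \ge \mu+t) \le e^{-\theta(\mu+t)} \mathbb{E}[e^{\theta S_n}] = e^{-\theta(\mu+t)} \prod_{i=1}^n \mathbb{E}[e^{\theta X_i}]. \notag
\end{align}
This reduces the whole problem to bounding each individual factor $\mathbb{E}[e^{\theta X_i}]$.

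Next I would exploit that each $X_i \in [0,1]$: convexity of $x \mapsto e^{\theta x}$ on $[0,1]$ gives the linear interpolation bound $e^{\theta X_i} \le 1 - X_i + X_i e^{\theta} = 1 + X_i(e^\theta - 1)$, and taking expectations together with $1+y \le e^y$ yields $\mathbb{E}[e^{\theta X_i}] \le \exp(\mathbb{E}[X_i](e^\theta-1))$. Multiplying over $i$ produces the key MGF estimate $\mathbb{E}[e^{\theta S_n}] \le e^{\mu(e^\theta-1)}$, so
\begin{align}
P(S_n \ge \mu + t) \le \exp\bigl(\mu(e^\theta-1) - \theta(\mu+t)\bigr). \notag
\end{align}

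The final step is optimization: setting $\theta = \ln(1+t/\mu)$ (legal since $t \ge 0$) reduces the bound to $\varphi(t/\mu)^\mu$, where $\varphi(\delta) = e^{\delta}/(1+\delta)^{1+\delta}$, and so the entire argument comes down to the one-variable inequality $\delta - (1+\delta)\ln(1+\delta) \le -\delta^2/3$ on $[0,1]$. I would verify this by checking that $g(\delta) := (1+\delta)\ln(1+\delta) - \delta - \delta^2/3$ has $g(0)=g'(0)=0$ and $g''(\delta) = 1/(1+\delta) - 2/3 \ge 0$ on $[0,1]$. The lower tail is handled symmetrically by running the same MGF argument with $\theta<0$ (equivalently, by applying the upper-tail bound to $Y_i = 1-X_i$), which in fact gives the slightly better exponent $-t^2/(2\mu)$; combining the two tails via a union bound delivers the stated two-sided estimate. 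The only nontrivial point is the quadratic envelope at the optimization step, and that is a short calculus exercise rather than a genuine obstacle — the rest of the argument is routine.
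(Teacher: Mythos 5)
The paper does not prove this theorem; it is cited as a known result (Theorem~22.6 in the book of Frieze and Karo\'nski), so there is no in-paper proof to compare against. Your strategy — Markov applied to the MGF, factoring by independence, the convexity bound $e^{\theta x} \le 1 + x(e^\theta-1)$ on $[0,1]$, and optimizing $\theta$ — is the standard and correct route.

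There is, however, a concrete error at the calculus step. You claim that $g(\delta) := (1+\delta)\ln(1+\delta) - \delta - \delta^2/3$ satisfies $g''(\delta) = \frac{1}{1+\delta} - \frac{2}{3} \ge 0$ on $[0,1]$. That is false: $g''(\delta) \ge 0$ only for $\delta \le 1/2$, and $g''(1) = 1/2 - 2/3 < 0$. The inequality $g(\delta) \ge 0$ on $[0,1]$ does still hold, but the argument has to be one step longer: $g'(\delta) = \ln(1+\delta) - 2\delta/3$ satisfies $g'(0)=0$, is increasing on $[0,1/2]$ and decreasing on $[1/2,1]$, and $g'(1) = \ln 2 - 2/3 > 0$; hence $g' \ge 0$ throughout $[0,1]$, so $g$ is nondecreasing and $g(\delta) \ge g(0) = 0$. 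As written, your justification of the key envelope inequality does not go through.

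Two smaller points. First, your parenthetical that the $\theta<0$ MGF argument is ``equivalently'' obtained by applying the upper tail to $Y_i = 1 - X_i$ is not right: that substitution yields a bound in terms of $n - \mu$, not $\mu$, and for $\mu < n/2$ this is strictly weaker; you should run the lower-tail MGF computation directly (which does give exponent $-t^2/(2\mu)$ via the inequality $\delta + (1-\delta)\ln(1-\delta) \ge \delta^2/2$). Second, the union bound combines the two one-sided estimates into $e^{-t^2/(3\mu)} + e^{-t^2/(2\mu)} \le 2e^{-t^2/(3\mu)}$, which carries an extra factor of $2$ relative to what the theorem literally asserts. (For the paper's use the factor of $2$ is harmless, but your proof as stated does not deliver the bound without it, and you should either flag the discrepancy or prove a sharper one-sided constant.)
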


Now we are ready to prove the upper bound in Theorem~\ref{THM:indep-number-ell-small}.

\begin{proof}[Proof of the upper bound in Theorem~\ref{THM:indep-number-ell-small} for the upper bound]
Let $G = G(q^{\ell},q^{2},2,\ell)$ be the bipartite graph on $V_1 \cup V_2$ with $|V_1| =  q^{\ell}$ and $|V_2| = q^2$.
Let $\mathcal{G}$ denote the hypergraph on $q^2$ vertices whose bipartite incident graph is $G$.
Note that $\mathcal{G}$ is a $q^{\ell-1}$-regular $q$-graph,
and by Lemmas~\ref{LEMMA:Zarankiewicz-construction-eigenvalues} and~\ref{LEMMA:intersection-inequality-hypergraph},
\begin{align}\label{equ:G-intersection-inequality}
\left| \sum_{E\in \mathcal{E}}|E\cap V'| -  \frac{1}{q}|V'||\mathcal{E}|\right|
\le q^{(\ell-1)/2} \sqrt{|V'||\mathcal{E}|}
\end{align}
holds for all $V' \subset V(\mathcal{G})$ and $\mathcal{E} \subset \mathcal{G}$.

Let $U \subset V(\mathcal{G})$ be a random set such that every vertex in $V(\mathcal{G})$ is included in $U$
independently with probability $p = q^{-\frac{2}{\ell+1}}$.
Then $\mathbb{E}[|U|] = p q^2 = q^{\frac{2\ell}{\ell+1}}$,
and by the Chernoff inequality,
\begin{align}
P\left(\left||U|-p q^2\right|> p q^2/{2}\right)
< e^{-\frac{\left(p q^2/2\right)^2}{3p q^2}}
= e^{-{p q^2}/{12}} \to 0 \quad{\rm as}\quad q\to \infty. \notag
\end{align}
For every $E\in \mathcal{G}$ we have $\mathbb{E}[|E\cap U|] = p d_1 = q^{\frac{\ell-1}{\ell+1}}$, and by the Chernoff inequality,
\begin{align}
P\left(\left| |E\cap U|-p d_1 \right|> {p d_1}/{2}\right)
< e^{-\frac{(p d_1/2)^2}{3p d_1}}
= e^{-{pd_1}/{12}}. \notag
\end{align}
Let $B$ denote the collection of edges $E\in \mathcal{G}$ such that $\left| |E\cap U|-p d_1 \right|> {p d_1}/{2}$.
Then
\begin{align}
\mathbb{E}[|B|]
\le q^{\ell} e^{-{pd_1}/{12}}
= q^{\ell} e^{-q^{\frac{\ell-1}{\ell+1}}/12}
\to 0 \quad{\rm as}\quad q \to \infty. \notag
\end{align}
Therefore, $w.h.p.$ the set $U$ satisfies that $q^{\frac{2\ell}{\ell+1}}/2 \le |U|\le 3q^{\frac{2\ell}{\ell+1}}/2$ and
$q^{\frac{\ell-1}{\ell+1}}/2 \le |E\cap U| \le 3q^{\frac{\ell-1}{\ell+1}}/2$ for all $E\in \mathcal{G}$.

Fix such a set $U$ that satisfies the conclusion above, and in order to keep the calculations simple
we may assume that $|U| = q^{\frac{2\ell}{\ell+1}}$.
Let $n = |U| = q^{\frac{2\ell}{\ell+1}}$, $m = |\mathcal{G}| = |\mathcal{H}| = q^{\ell}= n^{\frac{\ell+1}{2}}$,
$d_1 = q^{\frac{\ell-1}{\ell+1}} = n^{\frac{\ell-1}{2\ell}}$,
and $d_2 = q^{\ell-1} = n^{\frac{(\ell+1)(\ell-1)}{2\ell}}$.
Let $\mathcal{H}$ be the hypergraph on $U$ with
\begin{align}
\mathcal{H} = \left\{E\cap U: E\in \mathcal{G}\right\}. \notag
\end{align}
Then $\mathcal{H}$ is a $(2,d_1)$-uniform $d_2$-regular hypergraph.
Moreover, $(\ref{equ:G-intersection-inequality})$ also holds for all $V'\subset U$ and $\mathcal{E} \subset \mathcal{G}$.

Label the edges in $\mathcal{G}$ with $\{E_1,\ldots,E_{m}\}$ and let $m_i = |E_i|$ for $i\in[m]$.
Let $\mathcal{F} = \left\{\mathcal{S}_i : i\in [m]\right\}$,
where $\mathcal{S}_i$ is the $k$-graph on $[m_i]$ whose edge set is the collection of all $k$-subsets of $[m_i]$ that contain $[\ell+1]$.
Our construction of the $(n,k,\ell)$-omitting system is simply $\mathcal{H}(k,\ell) = \mathcal{H}(\mathcal{F})$,
and indeed, one can easily check that $|e'\cap e'| \neq \ell$ for all distinct edges $e,e' \in \mathcal{H}(k,\ell)$.

Let $\tau = \lceil 100 \left(\log n\right)^{1/\ell} \rceil$, and to keep the calculations simple we may assume that
$100 \left(\log n\right)^{1/\ell} \in \mathbb{N}$.

\begin{claim}\label{CLAIM:prob-independet}
$p_{\tau}(\mathcal{F}) \ge \left(\frac{\tau}{3d_1/2}\right)^{\ell+1}/2$.
\end{claim}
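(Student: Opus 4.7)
The plan is to compute $p_{\tau}(\mathcal{S}_i)$ exactly (as a ratio of binomials) for each $i \in [m]$ and then take the worst case over $i$. By definition of $\mathcal{S}_i$, a subset $T \subseteq [m_i]$ is \emph{not} independent in $\mathcal{S}_i$ precisely when $T$ contains some edge of $\mathcal{S}_i$, i.e.\ some $k$-subset of $[m_i]$ containing $[\ell+1]$. For a $\tau$-subset $T$, this is equivalent to $[\ell+1] \subseteq T$, provided $\tau \ge k$. Since $\tau = \lceil 100(\log n)^{1/\ell}\rceil \to \infty$ while $k$ is fixed, the condition $\tau \ge k$ holds for $n$ sufficiently large, and hence
\begin{align}
p_{\tau}(\mathcal{S}_i) \;=\; \frac{\binom{m_i-\ell-1}{\tau-\ell-1}}{\binom{m_i}{\tau}}
\;=\; \frac{\binom{\tau}{\ell+1}}{\binom{m_i}{\ell+1}}, \notag
\end{align}
where the second equality is the standard double-counting identity for pairs $(S,T)$ with $|S|=\ell+1$, $|T|=\tau$, $S\subseteq T \subseteq [m_i]$.

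Next I would plug in the bounds on $m_i$. Since $E_i \cap U$ has size at most $3d_1/2$ (by the choice of $U$ and the Chernoff estimate established just before the claim), we have $m_i \le 3d_1/2$, and thus
\begin{align}
\binom{m_i}{\ell+1} \;\le\; \frac{(3d_1/2)^{\ell+1}}{(\ell+1)!}. \notag
\end{align}
On the numerator side, $\binom{\tau}{\ell+1} = \tau^{\ell+1}(\ell+1)!^{-1}\prod_{j=0}^{\ell}(1-j/\tau)$, and I would use the simple estimate
\begin{align}
\prod_{j=0}^{\ell}\!\left(1-\frac{j}{\tau}\right) \;\ge\; 1 - \frac{1}{\tau}\sum_{j=0}^{\ell} j \;=\; 1 - \frac{\ell(\ell+1)}{2\tau} \;\ge\; \frac{1}{2}, \notag
\end{align}
which holds whenever $\tau \ge \ell(\ell+1)$; again, this is satisfied for $n$ large because $\tau \to \infty$ and $\ell$ is fixed.

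Combining the two bounds gives
\begin{align}
p_{\tau}(\mathcal{S}_i) \;\ge\; \frac{\tau^{\ell+1}/(2(\ell+1)!)}{(3d_1/2)^{\ell+1}/(\ell+1)!} \;=\; \frac{1}{2}\left(\frac{\tau}{3d_1/2}\right)^{\ell+1}, \notag
\end{align}
and taking the minimum over $i \in [m]$ yields the claimed bound for $p_{\tau}(\mathcal{F})$. There is no real obstacle here beyond bookkeeping: the main point is recognizing that the non-independence event for $\mathcal{S}_i$ collapses to ``$T$ contains $[\ell+1]$'' once $\tau \ge k$, after which everything is an elementary binomial estimate using the uniform upper bound $m_i \le 3d_1/2$ on the edge sizes of the sampled hypergraph $\mathcal{H}$.
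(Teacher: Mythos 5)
Your argument is essentially identical to the paper's: both reduce non-independence in $\mathcal{S}_i$ to the event $[\ell+1]\subseteq T$, express its probability as the same ratio of binomial coefficients, and then use $m_i\le 3d_1/2$ together with $\tau\to\infty$ to absorb the lower-order terms into the factor $1/2$. Your version is just slightly more explicit about the condition $\tau\ge k$ and the source of the constant; the proof is correct.
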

\begin{proof}[Proof of Claim~\ref{CLAIM:prob-independet}]
Fix $i\in [m]$ and let $I$ be a random $\tau$-subset of $[m_i]$.
It is easy to see that $I$ is not independent in $\mathcal{S}_i$ iff $[\ell+1] \subset I$.
Since
\begin{align}
P\left([\ell+1] \subset I\right)
= \frac{\binom{m_i-\ell-1}{\tau-\ell-1}}{\binom{m_i}{\tau}}
= \frac{\tau\cdots(\tau-\ell)}{m_i \cdots (m_i-\ell)}
\ge (1-o(1))\left(\frac{\tau}{m_i}\right)^{\ell+1}
> \frac{1}{2}\left(\frac{\tau}{3d_i/2}\right)^{\ell+1}, \notag
\end{align}
we obtain
\begin{align}
p_{\tau}(\mathcal{F}) > \frac{1}{2}\left(\frac{\tau}{3d_i/2}\right)^{\ell+1}. \notag
\end{align}
\end{proof}

Observe that $\tau$ satisfies
\begin{align}
\frac{p_{\tau}(\mathcal{F})}{\tau}
> \frac{\left(\frac{\tau}{3d_1/2}\right)^{\ell+1}/2}{\tau}
= \frac{100^{\ell} \log n}{2(3/2)^{\ell+1}d_{1}^{\ell+1}}
= \frac{100^{\ell} }{2(3/2)^{\ell+1}} \frac{\log n}{d_2}
> \frac{32\log n}{d_2} \notag
\end{align}
(here we used the fact that $d_2 = d_1^{\ell+1}$) and
\begin{align}
\tau
= 100 (\log n)^{1/\ell}
> \frac{32\left(q^{(\ell-1)/2}\right)^2}{q^{\ell-1}}. \notag
\end{align}
We may therefore apply Lemma~\ref{LEMMA:indep-algebraic-random-constrction} with $C=2$ to obtain
\begin{align}
\alpha\left(\mathcal{H}(k,\ell)\right) \le {2\tau n}/{d_1} = 200 n^{\frac{\ell+1}{2\ell}} (\log n)^{1/\ell}. \notag
\end{align}
\end{proof}

\section{Independent sets in $(n,k,\ell,\lambda)$-systems}\label{SEC:indepence-number-design}
In this section we prove Theorem~\ref{THM:independence-number-lambda-design}.
Our proof is a direct application of the following theorem due to Duke, Lefmann, and R\"{o}dl \cite{DLR95}.

\begin{theorem}[Duke-Lefmann-R\"{o}dl, \cite{DLR95}]\label{THM:DLR-Uncrowd}
Let $\mathcal{H}$ be a $k$-graph on $n$ vertices satisfying $\Delta(\mathcal{H}) \le t^{k-1}$, where $t \gg k$.
If $C_{\mathcal{H}}(2,j) \le n t^{2k-j-1-\epsilon}$ for $2 \le j \le k-1$ and some constant $\epsilon>0$,
then $\alpha(\mathcal{H}) \ge c(k,\epsilon)\left(\log t\right)^{1/(k-1)} \cdot n/t$.
\end{theorem}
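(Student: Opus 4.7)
The plan is to reduce the claim to the known independence-number bound for linear hypergraphs (Theorem~\ref{THM:FM13}) via random vertex sampling followed by a small cleanup. Fix a constant $\eta$ with $0 < \eta < \epsilon/(2k-3)$ and set $p = t^{-1+\eta}$. Form a random subset $V' \subseteq V(\mathcal{H})$ by including each vertex independently with probability $p$, and consider the induced subhypergraph $\mathcal{H}[V']$.

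First I would establish three statements for $V'$, each holding with probability $1-o(1)$. (i) The sample is large: $|V'| \ge pn/2$, by a Chernoff bound on the sum of indicator variables. (ii) The maximum degree in $\mathcal{H}[V']$ is $O((tp)^{k-1})$; since for each $v$ the induced degree is a sum of indicators with expectation at most $t^{k-1}p^{k-1}$, either Azuma/Talagrand concentration suffices, or, more simply, one discards the small set of vertices whose induced degree exceeds $2(tp)^{k-1}$, whose expected count is controlled by Markov. (iii) For each $2 \le j \le k-1$, the number of $(2,j)$-cycles surviving in $\mathcal{H}[V']$ is at most twice its expectation $p^{2k-j}\,C_\mathcal{H}(2,j) \le p^{2k-j} n t^{2k-j-1-\epsilon}$. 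Substituting $p = t^{-1+\eta}$ simplifies this to $n t^{\eta(2k-j) - 1 - \epsilon}$, which is $o(pn) = o(n t^{-1+\eta})$ precisely because $\eta(2k-j-1) < \epsilon$; the worst case $j=2$ is where the choice $\eta < \epsilon/(2k-3)$ is tight.

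Fix any outcome of $V'$ for which (i)--(iii) hold. The induced subhypergraph $\mathcal{H}[V']$ has $\Theta(pn)$ vertices and maximum degree $O((tp)^{k-1})$, but contains $o(pn)$ pairs of edges meeting in at least two vertices. For each such offending pair $\{E,E'\}$, mark an arbitrary vertex in $E\cap E'$ for deletion. Removing the marked set yields a vertex set $V''$ of size $(1-o(1))pn$ such that, for every offending pair $\{E,E'\}$ from $\mathcal{H}[V']$, at least one of $E,E'$ is no longer an edge of $\mathcal{H}[V'']$; hence $\mathcal{H}[V'']$ is linear. Its average degree $d$ still satisfies $d = O((tp)^{k-1})$. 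Applying Theorem~\ref{THM:FM13} gives
\[
\alpha(\mathcal{H}) \;\ge\; \alpha(\mathcal{H}[V''])
\;=\; \Omega\!\left(\frac{|V''|}{d^{1/(k-1)}}\,(\log d)^{1/(k-1)}\right)
\;=\; \Omega\!\left(\frac{n}{t}\,(\log t)^{1/(k-1)}\right),
\]
where I used $|V''| = \Theta(pn)$, $d^{1/(k-1)} = O(tp)$, and $\log d \ge \log (tp)^{k-1} = (k-1)\eta \log t = \Omega(\log t)$. The hidden constants depend only on $k$ and $\epsilon$ (through $\eta$), yielding the claimed $c(k,\epsilon)$.

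The main obstacle is item (ii): controlling the maximum degree of $\mathcal{H}[V']$ cleanly, since vertex degrees in the random subhypergraph are not independent and a naive union bound can be lossy. This is typically handled by a Talagrand- or Azuma-type concentration inequality applied to the Lipschitz functional $v \mapsto d_{\mathcal{H}[V']}(v)$, or by observing that since Theorem~\ref{THM:FM13} only requires control of the \emph{average} degree after removing a sparse set of high-degree vertices, the truncation step is essentially free. A secondary technical point is ensuring the three sampling events of (i)--(iii) hold simultaneously (a simple union bound over $O(k)$ events suffices, given that each fails with probability $o(1)$).
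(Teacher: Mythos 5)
The paper does not prove this statement at all; it is quoted as a black box from Duke--Lefmann--R\"odl \cite{DLR95}, so there is no in-paper argument to compare against. Your derivation is a genuine (and essentially correct) proof given the paper's other black boxes: you reduce to the linear case via random sparsification at density $p=t^{-1+\eta}$ with $\eta(2k-3)<\epsilon$, delete one vertex per surviving $(2,j)$-cycle, and invoke Theorem~\ref{THM:FM13}. The exponent bookkeeping checks out: the expected number of surviving $(2,j)$-cycles is at most $nt^{-1-\epsilon+\eta(2k-j)}=o(pn)$ exactly when $\eta(2k-j-1)<\epsilon$, and the final bound $\Omega\bigl(pn\cdot(\log t)^{1/(k-1)}/(tp)\bigr)$ recovers the claim. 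This is the standard ``sparsify until uncrowded'' technique going back to AKPSS, and it is logically legitimate here because the paper treats Theorem~\ref{THM:FM13} as given (historically the dependence runs the other way --- the paper itself remarks that FM13-type bounds follow from DLR --- but that creates no circularity within this paper's framework).

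Two small repairs are needed. First, your events (ii) and (iii) do not hold ``with probability $1-o(1)$'': Markov only gives constant failure probability, and with $k-2$ values of $j$ the union bound at the factor $2$ fails for $k\ge 4$. Replace ``twice the expectation'' by, say, $8k$ times the expectation (and control the edge count, hence the average degree, the same way); the counts are still $o(pn)$ and the union bound closes. As you note, maximum-degree concentration is unnecessary since Theorem~\ref{THM:FM13} only needs the average degree. Second, the line $\log d\ge\log(tp)^{k-1}$ uses a lower bound on $d$ when you have only an upper bound $d=O((tp)^{k-1})$. This is harmless because $(\log d)^{1/(k-1)}/d^{1/(k-1)}$ is decreasing for $d\ge e$, so the FM13 bound at the true average degree dominates the bound at $C(tp)^{k-1}$; and if $d<e$ then $\alpha(\mathcal{H}[V''])=\Omega(pn)=\Omega(nt^{-1+\eta})$, which already exceeds $n(\log t)^{1/(k-1)}/t$. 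With these adjustments the argument is complete.
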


\begin{proof}[Proof of Theorem~\ref{THM:independence-number-lambda-design}]
Fix $\delta>0$, and let $\epsilon>0$ be sufficiently small
such that $\frac{\ell-1}{k-2}-\delta < \frac{(\ell-1)(1-\epsilon)}{k-2+\epsilon}$ holds.
Let $t = \lambda^{\frac{1}{k-1}}n^{\frac{\ell-1}{k-1}}$ and
$\mathcal{H}$ be a $(n,k,\ell,\lambda)$-system, where $0< \lambda < n^{\frac{\ell-1}{k-2}-\delta}$.

Let $j\in[\ell-1]$ and $S \subset V(\mathcal{H})$ be a set of size $j$.
Since $\mathcal{H}$ is an $(n,k,\ell,\lambda)$-system,
$L_{\mathcal{H}}(S)$ is an $(n,k-j,\ell-j,\lambda)$-system.
Therefore,
\begin{align}
\Delta(\mathcal{H}) & \le \lambda{\binom{n}{\ell-1}}/{\binom{k-1}{\ell-1}}< t^{k-1}, \quad{\rm and}\quad \notag\\
|\Delta_{j}(\mathcal{H})| & \le \lambda {\binom{n}{\ell-j}}/{\binom{k-j}{\ell-j}} = O(\lambda n^{\ell-j})
\quad{\rm for}\quad 2 \le j \le \ell-1. \notag
\end{align}
It follows that
\begin{align}
C_{\mathcal{H}}(2,j)
= O\left(\lambda n^{\ell-j} |\mathcal{H}|\right)
= O\left(\lambda^2 n^{2\ell-j}\right)
\le nt^{2k-j-1-\epsilon} \quad{\rm for}\quad 2 \le j \le \ell-1. \notag
\end{align}
On the other hand, for $\ell \le j' \le k-1$ and a set $S \subset V(\mathcal{H})$ of size $j'$
the link $L_{\mathcal{H}}(S)$ has size at most $\lambda$.
Therefore,
\begin{align}
C_{\mathcal{H}}(2,j')
= O\left(\lambda |\mathcal{H}|\right)
= O\left(\lambda^2 n^{\ell}\right)
\le nt^{2k-j'-1-\epsilon}
\quad{\rm for}\quad \ell \le j' \le k-1. \notag
\end{align}
Therefore, by Theorem~\ref{THM:DLR-Uncrowd},
$\alpha(\mathcal{H})
= \Omega\left(\left(\log t\right)^{1/(k-1)}n/t\right)
= \Omega\left(\lambda^{-\frac{1}{k-1}} n^{\frac{k-\ell}{k-1}}\left(\log n\right)^{\frac{1}{k-1}}\right)$.
\end{proof}

\section{The Ramsey number of the $k$-Fan}\label{SEC:linear-tree}
In this section we prove Theorem~\ref{THM:Ramsey-k-fan}.
The lower bound (construction) is given by the so called {\em $L$-constructions}. These were introduced in~\cite{CM}, where they were used to answer an old Ramsey-type question of Ajtai-Erd\H os-Koml\'os-Szemer\'edi~\cite{AEKS}.

Let $m,n \ge 2$ and let $\mathcal{L}_{m,n}$ be the $k$-graph with vertex set $[m] \times [n]$ and edge set
\begin{align}
\left\{\{(x_1,y_1),(x_1,y_2),\ldots,(x_{k-1},y_2)\}: x_1< \cdots < x_{k-1}, y_1> y_2 \right\}. \notag
\end{align}

\begin{proposition}\label{PROP:L-construction-Fk-free}
For every $m,n \ge 2$ the hypergraph $\mathcal{L}_{m,n}$ is $F^{k}$-free.
\end{proposition}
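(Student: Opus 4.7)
The plan is to suppose for contradiction that $\mathcal{L}_{m,n}$ contains a copy of $F^{k}$, with petals $E_1,\ldots,E_k$ sharing the center $v=(a,b)$ and an extra edge $E$ meeting each $E_i$ in a single vertex $w_i\neq v$ and avoiding $v$, and to derive a contradiction from the very restricted set of $y$-coordinates available. Every edge of $\mathcal{L}_{m,n}$ has a unique ``top'' vertex at its higher row $y_1$ and $k-1$ ``bottom'' vertices at the common lower row $y_2<y_1$, with the top and the leftmost bottom sharing a column. So in each petal $v$ is either the top or a bottom vertex, and I would partition $[k]=T\sqcup B$ accordingly. The intersection condition $E_i\cap E_j=\{v\}$ then produces two useful constraints: for $i,j\in T$ the rows $y_2^{(i)}$ and $y_2^{(j)}$ must be distinct (otherwise the corner $(a,y_2^{(i)})$ is a second common vertex), and for $i,j\in B$ the bottom $x$-coordinate sets of $E_i$ and $E_j$ meet only in $\{a\}$.

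Because $E$ has just two row heights $d_1>d_2$, every $w_i$ lies at a height in $\{d_1,d_2\}$. Since the heights $\{y_2^{(i)}:i\in T\}$ are all below $b$ and all distinct, this forces $|T|\le 2$. I would then dispose of two of the three possible cases quickly: if $T=[k]$ with $k\ge 3$, then $k$ distinct heights below $b$ cannot fit into $\{d_1,d_2\}$; and if both $T$ and $B$ are nonempty, the strict separation of $T$-heights ($<b$) from $B$-heights ($\ge b$) forces $\{d_1,d_2\}$ to split across $b$, and then the count ``one vertex at $d_1$, $k-1$ at $d_2$'' inside $E$ reduces to $k\le 2$.

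The substantial case is $B=[k]$. Let $w_{i_0}$ be the unique $w_i$ at height $d_1$. If $w_{i_0}$ is not the top of $E_{i_0}$, then $d_1=b$, and the other $k-1$ vertices would have to lie at $d_2<b$, impossible because every $w_j$ is at height $\ge b$. Hence $w_{i_0}$ is the top of $E_{i_0}$, giving $d_1=y_1^{(i_0)}>b$. When $d_2=b$, the leftmost bottom column of $E$ equals $x_1^{(i_0)}\le a$, and I would rule out both sub-options: $x_1^{(i_0)}=a$ forces the corner $(a,b)=v$ into $E$, contradicting $v\notin E$, while $x_1^{(i_0)}<a$ forces some other $B$-petal to share with $E_{i_0}$ a bottom column strictly below $a$, violating the $\{a\}$-only rule. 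When $d_2>b$, each $w_i$ with $i\neq i_0$ must be the top of its own $E_i$, so the leftmost bottom column of $E$ is simultaneously the leftmost bottom column of two distinct $B$-petals $E_{i_0}$ and $E_{i^*}$; the $\{a\}$-only rule forces it to equal $a$, and then for any third index $i\in B\setminus\{i_0,i^*\}$ (which exists because $k\ge 3$) one simultaneously has $x_1^{(i)}>a$ from the ordering of $E$'s bottom columns and $x_1^{(i)}\le a$ since $a$ is one of its bottom columns, a contradiction.

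The main obstacle is this last case $B=[k]$: the argument has to thread together the ``top column equals leftmost bottom column'' property of $\mathcal{L}_{m,n}$, the forbidden appearance $v\in E$, and the $\{a\}$-only intersection rule for $B$-petals, in slightly different ways for $d_2=b$ and $d_2>b$. Once the bookkeeping is set up, the contradiction in each subcase is just a couple of lines.
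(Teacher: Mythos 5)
Your proof is correct, but it takes a genuinely different route from the paper's. The paper's argument is geometric: it first observes that no edge of $\mathcal{L}_{m,n}$ contains two vertices one of which is strictly north-east of the other, deduces that the center $v$ must therefore lie in one of four regions determined by the L-shape of $E$ (north-west of its top vertex, south-east of its rightmost vertex, or on one of the two segments of the L), and in each region exhibits two petals forced to share a second common vertex. You instead classify each petal by the role $v$ plays in it (top versus bottom vertex); the cases $T=[k]$ and ``$T,B$ both nonempty'' then collapse immediately by counting against the two row heights of $E$ together with the distinctness of the $T$-heights, and only $B=[k]$ requires real work, which you resolve through the leftmost-bottom-column structure. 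Both proofs bottom out in the same kind of contradiction (two petals sharing a vertex other than $v$, or $v\in E$), and they are of comparable length. The paper's version gives the cleaner mental picture (its Figure~\ref{Fig:L-construction}), while yours is arguably more careful at the boundary positions: the paper is terse precisely where heights coincide ($y_0=y_2$ in its notation), which is the subcase $d_2=b$ that you treat explicitly. I checked your $B=[k]$ analysis in detail and it is sound: the identification of the top of $E$ with the top of $E_{i_0}$, the forced equality of leftmost bottom columns of $E_{i_0}$ and $E_{i^*}$, and the final clash $x_1^{(i)}>a$ versus $x_1^{(i)}\le a$ for a third petal all go through.
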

\begin{proof}[Proof of Proposition~\ref{PROP:L-construction-Fk-free}]
Suppose that $\mathcal{L}_{m,n}$ contains a copy of $F^{k} = \{E_1,\ldots,E_k,E\}$.
Let $v = \bigcap_{i=1}^{k}E_i$ and assume that $v = (x_0,y_0)$,
$E = \left\{(x_1,y_1),(x_1,y_2),\ldots,(x_{k-1},y_2)\right\}$,
where $x_1< \cdots < x_{k-1}$ and $ y_1> y_2$.

By the definition of $F^{k}$, for every vertex $u \in E$, there exists an edge $E_i$ that contains both $u$ and $v$.
It is easy to see that if $x'_1 < x'_2$ and $y'_1 < y'_2$, then there is no edge in $\mathcal{L}_{m,n}$
containing both $(x'_1,y'_1)$ and $(x'_2,y'_2)$.
Therefore, we must have (see Figure~\ref{Fig:L-construction})
\begin{itemize}
\item[(1)] $x_0 \le x_1$ and $y_0 \ge y_1$, or
\item[(2)] $x_0 \ge x_{k-1}$ and $y_0 \le y_2$, or
\item[(3)] $x_0 = x_1$ and $y_2<y_0<y_1$, or
\item[(4)] $y_0 = y_2$ and $x_1<x_0<x_{k-1}$.
\end{itemize}

If $x_0 \le x_1$ and $y_0 \ge y_1$, then by the definition of $\mathcal{L}_{m,n}$, there is a $(k-1)$-set $J \subset [k]$
such that $\bigcap_{j\in J}E_j = (x_0,y_2)$, a contradiction.
If $x_0 \ge x_{k-1}$ and $y_0 \le y_2$, then by the definition of $\mathcal{L}_{m,n}$,
there exist $\{i,j\} \subset [k]$ such that $E_i \cap E_j = (x_1,y_0)$, a contradiction.
Similarly, if Case $(3)$ or Case $(4)$ happens, then there exist
$\{i,j\} \subset [k]$ such that $E_i \cap E_j = (x_1,y_2)$, a contradiction.
\end{proof}

\begin{figure}[htbp]
\centering
\begin{tikzpicture}[xscale=5,yscale=5]
\draw [->] (0,0)--(1.5+0.1,0);
\draw[color=uuuuuu] (1.5+0.08,0-0.08) node {$x$};

\draw [->] (0,0)--(0,1.1);
\draw[color=uuuuuu] (-0.08,1+0.08) node {$y$};

\draw (0,1)--(1.5,1);
\draw (1.5,0)--(1.5,1);

\draw[line width=1pt]
(0.5,0.6)--(0.5,0.4)--(1,0.4);

\draw [line width=0.8pt,color=sqsqsq,fill=sqsqsq,fill opacity=0.25]
(0,1)--(0.5,1)--(0.5,0.6)--(0,0.6)--(0,1);

\draw [line width=0.8pt,color=sqsqsq,fill=sqsqsq,fill opacity=0.25]
(1,0.4)--(1.5,0.4)--(1.5,0)--(1,0)--(1,0.4);

\begin{scriptsize}
\draw [fill=uuuuuu] (0.5,0.6) circle (0.3pt);
\draw [fill=uuuuuu] (0.5,0.4) circle (0.3pt);
\draw [fill=uuuuuu] (0.6,0.4) circle (0.3pt);
\draw [fill=uuuuuu] (0.7,0.4) circle (0.3pt);
\draw [fill=uuuuuu] (0.8,0.4) circle (0.3pt);
\draw [fill=uuuuuu] (1,0.4) circle (0.3pt);
\draw[color=uuuuuu] (0.75,0.4-0.05) node {$E$};
\draw[color=uuuuuu] (0.5+0.12,0.6) node {$(x_1,y_1)$};
\draw[color=uuuuuu] (0.5-0.1,0.4-0.05) node {$(x_1,y_2)$};
\draw[color=uuuuuu] (1,0.4+0.05) node {$(x_{k-1},y_{2})$};
\draw[color=uuuuuu] (0.75,0-0.07) node {$[m]$};
\draw[color=uuuuuu] (1.5+0.07,0.5) node {$[n]$};
\end{scriptsize}
\end{tikzpicture}
\caption{Only vertices that lie in these two shaded areas and the $L$-shaped path
that connects these two areas can be adjacent to all vertices in $E$.}
\label{Fig:L-construction}
\end{figure}
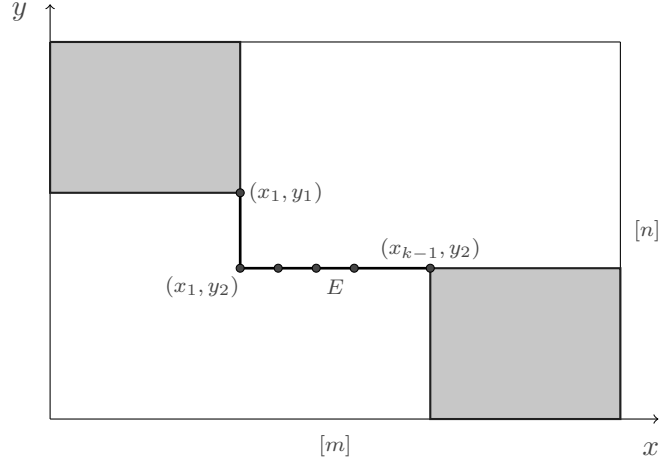

The following result gives an upper bound for the independence number of $\mathcal{L}_{m,n}$.

\begin{proposition}\label{PROP:L-construction-independent-set-2n}
The hypergraph $\mathcal{L}_{m,n}$ satisfies $\alpha(\mathcal{L}_{m,n}) < m+(k-2)n$.
\end{proposition}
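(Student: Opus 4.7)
My plan is to partition the independent set $I$ into a ``column-topmost'' part $T$ (bounded by $m$) and its complement (bounded by $(k-2)n$ via a row-by-row analysis), and then squeeze out strict inequality by observing that the top-most occupied row contributes nothing to the complement.

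\textbf{Step 1 (set up $T$).} For each column $x\in[m]$ with $I\cap(\{x\}\times[n])\neq\emptyset$, let $t_x$ be the vertex of $I$ in that column with the largest $y$-coordinate, and set $T=\{t_x\}$. Clearly $|T|\le m$.

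\textbf{Step 2 (key claim: at most $k-2$ non-top vertices per row).} Fix a row $y$ and list the vertices of $I$ in that row in column order: $v_1=(x_1,y),\ldots,v_s=(x_s,y)$ with $x_1<\cdots<x_s$. Suppose some $v_i\in I\setminus T$. Then by the definition of $T$ there exists $y'>y$ with $(x_i,y')\in I$. If $s-i\ge k-2$, the set
\[
\{(x_i,y'),\,v_i,\,v_{i+1},\ldots,v_{i+k-2}\}=\{(x_i,y'),(x_i,y),(x_{i+1},y),\ldots,(x_{i+k-2},y)\}
\]
matches the definition of an edge of $\mathcal{L}_{m,n}$ (take $x_1=x_i$, $y_1=y'$, $y_2=y$, and use the columns $x_i<x_{i+1}<\cdots<x_{i+k-2}$), contradicting independence. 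Hence $i\ge s-k+3$, so at most $k-2$ indices $i$ are possible, and $|(I\setminus T)\cap([m]\times\{y\})|\le k-2$.

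\textbf{Step 3 (sum up).} Summing Step 2 over the $n$ rows gives $|I\setminus T|\le (k-2)n$, and combining with Step 1 yields $|I|\le m+(k-2)n$.

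\textbf{Step 4 (strict inequality).} If $I=\emptyset$ the bound is trivial, so assume $I\neq\emptyset$ and let $y^{*}=\max\{y:(x,y)\in I\text{ for some }x\}$. Every vertex of $I$ in row $y^{*}$ is automatically the topmost vertex in its column, so $(I\setminus T)\cap([m]\times\{y^{*}\})=\emptyset$. Thus in the row sum, the row $y^{*}$ contributes $0$ rather than up to $k-2$, giving $|I\setminus T|\le (k-2)(n-1)$. Since $k\ge 3$, this gives $|I|\le m+(k-2)(n-1)<m+(k-2)n$.

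There is no real obstacle here; the only subtlety is being careful in Step 2 to verify that the configuration $\{(x_i,y'),v_i,\ldots,v_{i+k-2}\}$ genuinely fits the template of an $\mathcal{L}_{m,n}$-edge (one vertex above, $k-1$ vertices in a lower row starting from the same column, arranged left-to-right). Everything else is bookkeeping.
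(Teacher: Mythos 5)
Your proof is correct and follows essentially the same deletion argument as the paper (remove the topmost vertex in each occupied column, then observe that at most $k-2$ vertices per row can remain, else the $k-2$ right-neighbors together with the higher vertex in the same column would form an edge). Your Step~4 is a welcome extra touch: the paper simply asserts the strict inequality after a count that a priori only gives $\le m+(k-2)n$, whereas you justify strictness cleanly by noting that the topmost occupied row contributes nothing to $I\setminus T$.
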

\begin{proof}[Proof of Proposition~\ref{PROP:L-construction-independent-set-2n}]
Let $I$ be an independent in $\mathcal{L}_{m,n}$.
Remove the topmost vertex of each column and the $k-2$ rightmost vertices of each row in $I$.
It is easy to see that we removed at most $m + (k-2)n$ vertices from $I$,
and $I$ has no vertex left since otherwise $I$ would contain an edge in $\mathcal{L}_{m,n}$.
Therefore, $\alpha(\mathcal{L}_{m,n}) < m+(k-2)n$.
\end{proof}

Now we finish the proof of Theorem~\ref{THM:Ramsey-k-fan}.

\begin{proof}[Proof of Theorem~\ref{THM:Ramsey-k-fan}]
First we prove the lower bound.
Let $m = \lfloor\frac{t}{2}\rfloor$ and $n = \lfloor \frac{t-1}{2(k-2)} \rfloor$.
By Propositions~\ref{PROP:L-construction-Fk-free} and \ref{PROP:L-construction-independent-set-2n},
the $k$-graph $\mathcal{L}_{m,n}$ is $F^k$-free
and $\alpha(\mathcal{L}_{m,n}) \le m + (k-2)n < t$.
So,
\begin{align}
r_{k}(F^{k},t) > mn= \left\lfloor \frac{t}{2} \right\rfloor \left\lfloor \frac{t-1}{2(k-2)} \right\rfloor. \notag
\end{align}

To prove the upper bound, let us show that $r_{k}(F^{k},t) \le r_{k}(S_t^{k},t)$ first.
Indeed, let $\mathcal{H}$ be a $k$-graph on $r_{k}(S_t^k,t)$ vertices.
We may assume that $\mathcal{H}$ does not contain an independent set of size $t$.
Then, there exist $t$ distinct edges $E_1,\ldots,E_{t}$ and a vertex $v$ in $\mathcal{H}$
such that $E_i \cap E_j = \{v\}$ for $1 \le i < j \le t$.
Let $S$ be a set that contains exactly one vertex from each $E_i \setminus \{v\}$ for $i\in[t]$.
Then $S$ has size $t$ and hence contains an edge in $\mathcal{H}$, and it implies that $\mathcal{H}$ contains a copy of $F^{k}$.
Therefore, $r_{k}(F^{k},t) \le r_{k}(S_t^k,t)$,
and it follows from Theorem~\ref{THM:Ramsey-linear-tree} that $r_{k}(F^{k},t) \le t(t-1)+1$.
\end{proof}
\bibliographystyle{abbrv}
\bibliography{ramseyfan}
\end{document}